\newtheorem{theorem}{Theorem}[section]
\newtheorem{proposition}[theorem]{Proposition}
\newtheorem{lemma}[theorem]{Lemma}
\newtheorem{corollary}[theorem]{Corollary}
\newtheorem{definition}[theorem]{Definition}
\newtheorem{remark}[theorem]{Remark}
\numberwithin{equation}{section}
\newcommand{\Z}{\mathbb{Z}}
\newcommand{\R}{\mathbb{R}}
\newcommand{\N}{\mathbb{N}}
\renewcommand{\epsilon}{\varepsilon}
\newcommand{\eps}{\varepsilon}
\newcommand{\e}{\varepsilon}
\newcommand{\ugu}{\;{\stackrel{k}{=}}\;}
\renewcommand{\leq}{\leqslant}
\renewcommand{\le}{\leqslant}
\renewcommand{\ge}{\geqslant}
\begin{document}

\author[1]{Serena Dipierro}

\author[2]{Ovidiu Savin}

\author[1,3,4,5]{Enrico Valdinoci}

\affil[1]{\footnotesize School of Mathematics and Statistics,
University of Melbourne,
Richard Berry Building,
Parkville VIC 3010,
Australia \medskip } 

\affil[2]{\footnotesize Department of Mathematics, Columbia University,
2990 Broadway,
New York NY 10027, USA \medskip}

\affil[3]{\footnotesize Weierstra{\ss} Institut f\"ur Angewandte
Analysis und Stochastik, Hausvogteiplatz 5/7, 10117 Berlin, Germany \medskip}

\affil[4]{\footnotesize Dipartimento di Matematica, Universit\`a degli studi di Milano,
Via Saldini 50, 20133 Milan, Italy \medskip }

\affil[5]{\footnotesize Istituto di Matematica Applicata e Tecnologie Informatiche,
Consiglio Nazionale delle Ricerche,
Via Ferrata 1, 27100 Pavia, Italy \medskip }

\title{Definition of fractional Laplacian\\
for functions with polynomial growth\thanks{The second author has been supported by NSF grant DMS- 1200701. The third author has been supported by ERC grant 277749 ``EPSILON Elliptic Pde's and Symmetry of Interfaces and Layers for Odd Nonlinearities'' and PRIN grant 201274FYK7 ``Aspetti variazionali e perturbativi nei problemi differenziali nonlineari''.
It is a pleasure to thank Susanna Terracini for her interesting comments.
Part of this paper has been written on the occasion of a very pleasant visit
of the first and third authors to Columbia University.
An informal talk about this paper has been given
on the occasion
of the Conference ``Asymptotic Patterns in Variational Problems: PDE and Geometric Aspects''
in Oaxaca, and a video is available at
{\tt http://www.birs.ca/events/2016/5-day-workshops/16w5065/videos/watch/201609281545-Valdinoci.html}
Emails:
{\tt serydipierro@yahoo.it}, {\tt savin@math.columbia.edu}, {\tt enrico@mat.uniroma3.it} }}

\date{}

\maketitle

\begin{abstract}
We introduce a notion of fractional Laplacian
for functions which grow more than linearly at infinity. In such case,
the operator is not defined in the classical sense: nevertheless,
we can give an ad-hoc definition which can be
useful for applications in various fields,
such as blowup and free boundary problems.

In this setting, when the solution has a polynomial
growth at infinity, the right hand side of the equation
is not just a function, but an equivalence class of functions
modulo polynomials of a fixed order.

We also give a sharp version of the Schauder estimates in this framework,
in which the full smooth H\"older norm of the solution is controlled
in terms of the seminorm of the nonlinearity.

Though the method presented is very general
and potentially works for general nonlocal operators, for clarity and concreteness
we focus here on the case of the fractional Laplacian.
\end{abstract}

\bigskip\bigskip

\section{Introduction}

As well known (see e.g.~\cite{MR0214795, MR2707618, MR2944369}),
to define the fractional Laplacian of a function\footnote{For short,
in the rest of the paper,
the principal value notation in \eqref{DE:s} will be
tacitly understood and not repeated.} as
\begin{equation} \label{DE:s}
(-\Delta)^s u(x) := \lim_{\e\to0}\int_{\R^n\setminus B_\e(x)}
\frac{u(x)-u(y)}{|x-y|^{n+2s}}\,dy,\end{equation}
with~$s\in(0,1)$, two types of assumptions are needed, namely:
\begin{itemize}
\item the function $u$ needs to be sufficiently regular near~$x$,
\item the function $u$ needs to have a growth control at infinity.
\end{itemize}
The regularity condition is indeed needed in order to make
the integral in~\eqref{DE:s} convergent near the singularity
(possibly after cancellation). On the other hand,
the growth condition at infinity is needed to make the
tail of the integrand convergent: for this scope, usually
the most general assumption on~$u$ at infinity can be written in
the form
\begin{equation}\label{G:sta}
\int_{\R^n} \frac{|u(y)|}{1+|y|^{n+2s}}\,dy < +\infty
.\end{equation}
The need of assumptions at infinity is a typical feature of fractional
problems (of course, in the case of the classical Laplacian, 
there is no need to prescribe this kind of conditions in order to compute
derivatives). In this sense, the study of nonlocal operators
presents several conceptual difficulties with respect to the
classical case, inherited from
the fact that the behavior at infinity
may deeply affect the value of the fractional Laplacian:
see e.g.~\cite{2014arXiv1404.3652D, 2016arXiv160904438D}
for rather general examples (in particular, roughly speaking,
appropriate oscillations
at infinity can make the fractional Laplacian vanish identically
in a given ball, basically independently on the values of the function
in such a ball).\medskip

In addition, conditions at infinity such as~\eqref{G:sta}
often provide a series of additional difficulties in the regularity
theories for fractional operators since this type of assumptions
behaves badly with respect to scalings and blowups:
as an example, one can consider a function which is bounded
and quadratic near the origin and check that its blowup
does not satisfy~\eqref{G:sta} (in spite of the fact that both
the original function and its blowup may be as smooth as we wish).\medskip

The goal of this paper is to provide a natural setting
to make sense of the fractional Laplacian under weaker conditions
at infinity. Of course, some condition at infinity must be
taken in order to avoid the examples in~\cite{2014arXiv1404.3652D, 2016arXiv160904438D},
nevertheless we give here a framework which is more flexible and
compatible with scalings and blowups.\medskip

The basic idea for this is that, if the function grows too much at infinity,
its fractional Laplacian diverges, but it can be written as a given function
``plus a diverging sequence of polynomials\footnote{As customary,
polynomials of negative degree are set to be zero.}
of a given degree''.
For instance, if the function grows linearly at infinity and~$s=1/2$,
then condition~\eqref{G:sta} is violated and~$\sqrt{-\Delta}$ cannot
be defined in the usual sense. We will see that, in this case,
a definition is possible,
up to ``a diverging sequence of constants''.\medskip

{F}rom this, one is formally allowed to ``take derivatives of the equation''
and obtain regularity estimates: in the previous example,
one would say that the derivatives of the constants play no role
and,
in case one has ``polynomials of degree~$k-1$ as a remainder'',
the equation will be well posed ``up to derivatives of order~$k$''
(which make these polynomials vanish). Clearly, a rather delicate
argument will be used to check that this formal idea makes sense at
all, since these additional ``remainders'' are divergent and so they
do not obviously vanish after differentiation.\medskip

As a matter of fact, to introduce the general setting of possibly divergent
fractional Laplacians and to develop the related regularity theory,
we will use sequences of cutoffs to reduce the problem to the more usual
setting and we will obtain uniform estimates in an appropriate sense.
To this aim, we consider\footnote{The choice of the particular
cutoff in~\eqref{9d01} has been made for the sake of concreteness.
Other choices are indeed possible as well.}
the family of cutoffs
\begin{equation}\label{9d01}
\chi_R(x):=\left\{ \begin{matrix}
1 & {\mbox{ if }} x\in B_R,\\
0 & {\mbox{otherwise,}}
\end{matrix}
\right.
\end{equation}
and we
fix the following setting.

\begin{definition}\label{DE:k}
Let $s\in(0,1)$, $k\in\N$, $u:\R^n\to\R$ and $f:B_1\to\R$.
Assume that~$u$ is continuous in~$B_1$ and
\begin{equation}\label{G:k}
\int_{\R^n} \frac{|u(y)|}{1+|y|^{n+2s+k}}\,dy < +\infty
.\end{equation}
We say that
$$ (-\Delta)^s u \ugu f \qquad{\mbox{ in }}\;B_1$$
if there exist
a family of polynomials $P_R$, which have degree at most $k-1$,
and functions~$f_R:B_1\to\R$
such that
\begin{equation} \label{98e28jkgf12345s}
(-\Delta)^s (\chi_R u) = f_R + P_R\end{equation}
in~$B_1$ in the viscosity sense, with
\begin{equation}\label{9873irtugefjhdvdghaiugdgf234:00}
\lim_{R\to+\infty} f_R(x) =  f(x)\end{equation}
for any $x\in B_1$.
\end{definition}

We stress
again the fact that a classical definition of $(-\Delta)^s$
is not available in the setting of Definition~\ref{DE:k}
(not even for smooth functions)
unless one requires condition~\eqref{G:sta}
(and, of course, the condition in \eqref{G:k} is weaker 
than the one in \eqref{G:sta}
when $k\ge1$).
In this sense, the notation $(-\Delta)^s u$ in the case of Definition~\ref{DE:k}
represents a ``divergent'' operator. Nevertheless, as we will see in the forthcoming
Corollary \ref{POLLI2}, it is always possible to construct a function $f$
as requested by Definition~\ref{DE:k} (in particular,
the set of functions $u$ for which Definition~\ref{DE:k} makes sense is non-void).\medskip

Also, as it will be discussed in Corollary \ref{8IJAJS2wedfv1234},
the notion given by Definition~\ref{DE:k} reduces to the standard
fractional Laplacian when $k=0$.\medskip

Moreover, it follows easily from
Definition~\ref{DE:k} that
\begin{equation}\label{9idwfjdv827rutgjsakhdaaaaa2} {\mbox{if $
(-\Delta)^s u \ugu f$ in $B_1$,
then
$
(-\Delta)^s u\;
{\stackrel{k+1}{=}} \; f$ in $B_1$}}.\end{equation}
In terms of applications, we mention that
condition \eqref{G:sta} is often ``too rigid''
in the nonlocal framework: for instance, in many free boundary problems,
it is important to look at blowup sequences with degree higher than one
(say, $3/2$), and the blowup will not satisfy \eqref{G:sta}, see e.g. \cite{2015arXiv150405569D, 2016arXiv160105843C}.
Some ad-hoc arguments have been sometimes exploited in the literature
to overcome this type of difficulties, but we believe that a setting as the one
in Definition \ref{DE:k} can provide technical simplifications
and conceptual advantages when dealing with these cases.
\medskip

We also observe that the function $f$ in Definition \ref{DE:k}
is not uniquely determined, since any fixed polynomial can
be added to $f_R$ (and subtracted to $P_R$)
without affecting the setting in Definition \ref{DE:k},
and so
\begin{equation}\label{0dfhvnoooP}
{\mbox{if $(-\Delta)^s u\ugu f$ in $B_1$,
then $(-\Delta)^s u\ugu f+P$ in $B_1$
for any polynomial of degree $k-1$.}}  
\end{equation}
Nevertheless, the multiplicity in \eqref{0dfhvnoooP}
is exactly the one which characterizes $f$. Namely, we have that $f$
is determined up to polynomials of degree $k-1$, as pointed out by the following
observation (whose simple proof is given in Section~\ref{SECT:2}):

\begin{lemma}\label{UGUP}
Assume that $(-\Delta)^s u\ugu f$ and $(-\Delta)^s u\ugu \tilde f$ in $B_1$.
Then, there exists a polynomial $P$ of degree at most $k-1$ such that $f-\tilde f=P$.
\end{lemma}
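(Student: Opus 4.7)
The plan is to exploit Definition~\ref{DE:k} at each fixed~$R$ to obtain an identity between two representations of~$(-\Delta)^s(\chi_R u)$, and then to pass to the limit using the finite-dimensionality of the space of polynomials of degree at most~$k-1$.

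\medskip

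First, by Definition~\ref{DE:k} applied to the two hypotheses, there exist polynomials $P_R$, $\tilde P_R$ of degree at most~$k-1$ and functions $f_R,\tilde f_R:B_1\to\R$ such that
$$
(-\Delta)^s(\chi_R u)\;=\;f_R+P_R\;=\;\tilde f_R+\tilde P_R \qquad\text{in } B_1
$$
in the viscosity sense, with $f_R(x)\to f(x)$ and $\tilde f_R(x)\to \tilde f(x)$ for every $x\in B_1$. Since both right-hand sides represent the same object $(-\Delta)^s(\chi_R u)$, one infers the pointwise identity $f_R+P_R=\tilde f_R+\tilde P_R$ in $B_1$, and therefore
$$
Q_R(x)\;:=\;\tilde P_R(x)-P_R(x)\;=\;f_R(x)-\tilde f_R(x)
$$
is a polynomial of degree at most~$k-1$ in~$x$.

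\medskip

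Next I would pass to the limit $R\to+\infty$. By~\eqref{9873irtugefjhdvdghaiugdgf234:00} for both families, $Q_R(x)\to f(x)-\tilde f(x)$ pointwise for every $x\in B_1$. The space of polynomials of degree at most~$k-1$ in~$n$ variables has finite dimension $N:=\binom{n+k-1}{n}$, and one may fix a unisolvent configuration $x_1,\dots,x_N\in B_1$ on which polynomial interpolation is a linear isomorphism onto $\R^N$. Then each coefficient of $Q_R$ is a fixed linear combination of the values $Q_R(x_j)$, which all converge; consequently the coefficients of $Q_R$ converge to those of a limit polynomial $P$ of degree at most~$k-1$. Since $Q_R(x)\to P(x)$ for all $x$ in $B_1$ (in fact, on all of~$\R^n$), we conclude $f-\tilde f=P$ in $B_1$, which is the desired conclusion.

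\medskip

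The only subtle point is the justification, at each fixed~$R$, of the pointwise equality $f_R+P_R=\tilde f_R+\tilde P_R$ in~$B_1$ out of the two viscosity identities. When $R>1$ the function $\chi_R u$ coincides with the continuous function~$u$ on a neighborhood of $B_1$, so this follows from the standard fact that the viscosity formulation uniquely determines the right-hand side. Should one wish to avoid any continuity assumption on $f_R$ or $\tilde f_R$, one can instead derive the identity at points admitting $C^2$ test functions touching $\chi_R u$ from above and below (a set sufficiently rich to determine a polynomial of degree at most $k-1$), and then invoke the polynomial nature of $Q_R$ to propagate the identity everywhere in~$B_1$.
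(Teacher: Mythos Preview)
Your proof is correct and follows essentially the same strategy as the paper: subtract the two representations at each fixed~$R$, observe that the difference is a polynomial of degree at most~$k-1$, and pass to the limit using finite-dimensionality of the polynomial space. The paper packages the last step as Lemma~\ref{9uids2345hv} (pointwise limits of such polynomials are again such polynomials), while you argue it directly via a unisolvent interpolation set; these are equivalent. Your final paragraph on extracting the pointwise identity $f_R+P_R=\tilde f_R+\tilde P_R$ from the viscosity formulation is actually more careful than the paper, which writes this step informally.
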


As an illustrative example of our setting, let us point out that
one can compute $\sqrt{-\Delta}\, u$ when $u(x)=x^2$ in dimension $n=1$,
using Definition \ref{DE:k} with $k=2$. Indeed by a direct computation, one sees
that
\begin{equation}\label{801js78ajsjjs}
\sqrt{-\Delta}\, x^2 \;{\stackrel{2}{=}}\;0,
\end{equation}
and in fact a more general result will be presented in
Theorem \ref{LIOUVILLE} below.

Of course, from \eqref{801js78ajsjjs} and
Lemma \ref{UGUP}, a bunch of ``curious'' identities follows, such as
\begin{equation}\label{iosucjv43546758ikdjsbx13}\begin{split}
&\sqrt{-\Delta}\, x^2 \;{\stackrel{2}{=}}\;0, \qquad\qquad
\sqrt{-\Delta} \,x^2 \;{\stackrel{2}{=}}\;1, \qquad\qquad
\sqrt{-\Delta} \,x^2 \;{\stackrel{2}{=}}\;-1, \qquad\qquad
\\&\qquad\qquad\sqrt{-\Delta} \,x^2 \;{\stackrel{2}{=}}\; x \qquad\qquad
\sqrt{-\Delta} \,x^2 \;{\stackrel{2}{=}}\;ax+b, \qquad\qquad
\end{split}\end{equation}
for any $a$, $b\in\R$ (these identities indeed look funny at a first glance,
nevertheless they are all correct
in our setting).\medskip

A counterpart of our construction could be also
discussed in terms of extension results and Dirichlet-to-Neumann operators.
For instance, if one looks for the general harmonic function $U=U(x,y)$
in $\R\times(0,+\infty)$ with $U(x,0)=x^2$ and with at most quadratic growth
at infinity, one has that $U(x,y)=x^2-y^2-axy-by$.
In this sense, one is tempted (as usual) to identify
$\sqrt{-\Delta} x^2$ with $-\partial_y U(x,0)=ax+b$, for any $a$, $b\in\R$
which is indeed the last identity in \eqref{iosucjv43546758ikdjsbx13}.\medskip

As a matter of fact, 
an alternative approach 
to the one given in Definition \ref{DE:k}
would consist
in considering an extension problem
(modulo polynomials), 
but we followed the procedure
in Definition \ref{DE:k}, since it can be applied
to more general kernels.\medskip

Besides the intrinsic beauty of identities such as the ones
in \eqref{iosucjv43546758ikdjsbx13},
in our framework, the usefulness of Definition \ref{DE:k}
lies in its flexibility and possibility of applications
to obtain sharp regularity estimates. In this sense,
we give the following result, which can be seen as an optimal
bound in H\"older spaces for the derivatives of the solution
in terms of the seminorm of its (possibly divergent) fractional Laplacian
and a weak control of the function at infinity, as given in~\eqref{G:k}.

To this aim, 
as usual, if~$m\in\N$,
$\theta\in(0,1)$ and~$\gamma=m+\theta$,
we use the notation
\begin{eqnarray*}&&
\|f\|_{C^m(B_1)}:=\sum_{j=0}^m \| D^j f\|_{L^\infty(B_1)},\\
&& [f]_{C^\gamma(B_1)} :=\sup_{x\ne y\in B_1}
\frac{|D^mf(x)-D^mf(y)|}{|x-y|^\theta}\\
{\mbox{and }}&&
\|f\|_{C^\gamma(B_1)} := \|f\|_{C^m(B_1)} + [f]_{C^\gamma(B_1)} .
\end{eqnarray*} 
It is also convenient to introduce the following $k$-convention
on H\"older norms: we denote 
\begin{equation}\label{CGAMMA}
[ f ]_{C^\gamma(\Omega;k)}:=\inf [f-P]_{C^\gamma(\Omega)},
\end{equation}
where the $\inf$ is taken over all the polynomials $P$ of degree at most $k-1$;
of course, when $\gamma>k-1$, these polynomials
disappear after derivation and we have that 
\begin{equation}\label{TOGLI}
[ f ]_{C^\gamma(\Omega;k)}= [ f ]_{C^\gamma(\Omega)}
\qquad{\mbox{ if }}\gamma>k-1.\end{equation}
Notice that the setting in \eqref{CGAMMA} is consistent with
the multiplicity in \eqref{0dfhvnoooP}, since, for any
polynomial $Q$ of degree at most $k-1$, we have that
$$ [ f +Q]_{C^\gamma(\Omega;k)}=[ f ]_{C^\gamma(\Omega;k)}.$$
With this notation,
the precise statement of our Schauder 
estimates\footnote{Throughout this paper, we will
use the standard notation for the complementary set. Namely, given~$X\subseteq\R^n$
we set~$X^c:=\R^n\setminus X$.}
is the following.

\begin{theorem}\label{SCHAUDER}[$k$th order Schauder estimates
for divergent fractional Laplacians]
Let $s\in(0,1)$, $k\in\N$ and $u:\R^n\to\R$.

Assume that $u$ is continuous in~$B_1$ and
\begin{equation*}
J_{u,k}:=\int_{B_{1/2}^c} \frac{|u(y)|}{|y|^{n+2s+k}}\,dy < +\infty
.\end{equation*}

Suppose that
$$ (-\Delta)^s u \ugu f \qquad{\mbox{ in }}\;B_1.$$
Then, for any 
$\gamma>0$
such that~$\gamma\not\in\N$ and
$\gamma+2s\not\in\N$, and any $\ell\in \N$, it holds that
\begin{equation}\label{GH:OAA}
\|u\|_{C^{\gamma+2s}(B_{1/2})}
\le C\, \Big( [f]_{C^\gamma(B_1;\ell)}+J_{u,\ell}
\Big),
\end{equation}
for some $C>0$, only depending on $n$, $s$, $\gamma$,
$k$ and $\ell$.\end{theorem}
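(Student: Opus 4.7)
The plan is to use the truncation scheme of Definition~\ref{DE:k} to reduce the divergent equation to a classical one for the compactly supported function $v_R := \chi_R u$, apply a standard interior Schauder estimate for $(-\Delta)^s$ to $v_R$, and then pass to the limit $R \to +\infty$, using the modulo-polynomial seminorm $[\cdot]_{C^\gamma(\cdot;\ell)}$ on the right-hand side to absorb the polynomial corrections $P_R$. Since both $[f]_{C^\gamma(B_1;\ell)}$ and $J_{u,\ell}$ are monotone non-increasing in $\ell$, one may assume throughout that $\ell \ge k$.

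For each large $R$, Definition~\ref{DE:k} yields the classical (viscosity) equation $(-\Delta)^s v_R = f_R + P_R$ in $B_1$ with $\deg P_R \le k-1 \le \ell-1$, together with the pointwise convergence $f_R(x) \to f(x)$. Since $v_R$ is bounded and compactly supported, standard interior Schauder theory for $(-\Delta)^s$ gives
\[
\|v_R\|_{C^{\gamma+2s}(B_{1/2})} \le C\left( [f_R + P_R]_{C^\gamma(B_{3/4})} + \|v_R\|_{L^\infty(B_{3/4})} + \int_{B_{3/4}^c}\frac{|v_R(y)|}{1+|y|^{n+2s}}\,dy\right).
\]
To replace the plain $C^\gamma$ seminorm by the modulo-polynomial one, I pick a polynomial $Q$ of degree $\le \ell-1$ nearly realizing the infimum $[f]_{C^\gamma(B_1;\ell)}$, and exploit the Liouville-type identities around Theorem~\ref{LIOUVILLE} (polynomials lie in the $k$-kernel of $(-\Delta)^s$) to absorb the polynomial piece $P_R - Q$ at the level of $v_R$ by subtracting an explicit auxiliary function whose fractional Laplacian matches $P_R - Q$ modulo an admissible polynomial. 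The reduced source is then $(f_R - Q)$ plus a polynomial of degree $\le \ell-1$, whose $C^\gamma(B_{3/4})$ seminorm converges to $[f - Q]_{C^\gamma(B_1)} \le [f]_{C^\gamma(B_1;\ell)} + \eps$ as $R \to +\infty$. The tail integral is dominated by $J_{u,\ell}$, whose extra $|y|^\ell$ weight is exactly what accommodates the polynomial growth of $u$; the $L^\infty$-piece is then removed by iterating the same estimate at a lower H\"older order (a standard bootstrap), or alternatively by a contradiction/compactness argument that uses the Liouville classification of Theorem~\ref{LIOUVILLE} to rule out nonzero limits.

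Passing $R \to +\infty$ via Arzel\`a--Ascoli (the uniform $C^{\gamma+2s}$ bound on $B_{1/2}$ gives precompactness in $C^{\gamma+2s-\eps}(B_{1/2})$), combined with the pointwise convergence $v_R \to u$ on $B_{1/2}$, yields $u \in C^{\gamma+2s}(B_{1/2})$ with the claimed bound.

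The principal obstacle is that the coefficients of the polynomials $P_R$ may genuinely diverge with $R$, so a naive application of the classical estimate produces constants that blow up. The fact that the right-hand side of \eqref{GH:OAA} involves only the modulo-polynomial seminorm is precisely what creates the room to absorb this divergence; making this rigorous requires a coordinated choice of the polynomial $Q$ subtracted from $f$ and of the auxiliary function subtracted from $v_R$, so that every error term closes up uniformly in $R$ before the limit is taken.
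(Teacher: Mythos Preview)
Your outline has two concrete gaps that prevent the argument from closing.

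First, the appeal to Theorem~\ref{LIOUVILLE} is circular: in this paper the Liouville theorem is a \emph{consequence} of Theorem~\ref{SCHAUDER} (see Section~\ref{SECT:4}), so you cannot invoke it here. The ``polynomials lie in the $k$-kernel'' statement is exactly the nontrivial direction of Theorem~\ref{LIOUVILLE}, and its proof uses~\eqref{GH:OAA}.

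Second, and more seriously, your control of the tail integral is wrong. The classical Schauder estimate for $v_R=\chi_R u$ produces the tail
\[
\int_{B_{3/4}^c}\frac{|v_R(y)|}{1+|y|^{n+2s}}\,dy
=\int_{B_R\setminus B_{3/4}}\frac{|u(y)|}{1+|y|^{n+2s}}\,dy,
\]
with weight $|y|^{-n-2s}$, not $|y|^{-n-2s-\ell}$. If $u$ has genuine polynomial growth of order~$k$ (which is the whole point of the setting), this quantity diverges like a power of~$R$; it is \emph{not} dominated by~$J_{u,\ell}$ uniformly in~$R$. Likewise, the ``auxiliary function whose fractional Laplacian matches $P_R-Q$'' would itself have to grow with the diverging coefficients of~$P_R$, and you give no mechanism to control its $C^{\gamma+2s}$ norm or tail uniformly in~$R$. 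You correctly flag this as ``the principal obstacle'' in your last paragraph, but the proposal does not actually overcome it.

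The paper's route is structurally different. The key device is Theorem~\ref{POLLI}: one Taylor-expands the kernel $|x-y|^{-n-2s}$ in~$x$ to order~$k-1$, which splits the far-field contribution into an explicit polynomial of degree~$\le k-1$ in~$x$ (this \emph{is} the diverging $P_R$) plus a remainder with the improved weight $|y|^{-n-2s-k}$ (the function~$\psi$ and the term $f_{3,u}$). This is what converts the bad tail into $J_{u,k}$ with no limit needed. With that in hand, the proof runs as follows: a sharp $k=0$ estimate (Proposition~\ref{SCHAUDER=k0}, itself built on a compactness lemma that uses only the elementary rigidity Lemma~\ref{PLO:0}, not Liouville) gives the result for~$\gamma>k-1$ after one cutoff at a fixed scale (Proposition~\ref{PRE:S} and Corollary~\ref{gamma barra}). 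For~$\gamma<k-1$ one subtracts the Dirichlet solution of $(-\Delta)^s v=f$ in~$B_1$ to reduce to $(-\Delta)^s w\ugu 0$, and applies the $\gamma>k-1$ case at a slightly larger exponent~$\bar\gamma$; finally one optimizes over the polynomial added to~$f$ (Lemma~\ref{Xuax182rrrr3}) and adjusts~$\ell$ by monotonicity. No $R\to\infty$ limit is taken in the estimate itself.
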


We remark that, differently from the usual way of writing the
Schauder estimates, the right hand side of \eqref{GH:OAA} does not
contain $\|u\|_{L^\infty(B_1)}$ nor $\|f\|_{C^\gamma(B_1)}$.
That is, we can bound
the whole norm $\|u\|_{C^{\gamma+2s}(B_{1/2})}$
with a contribution of $u$ coming from
outside $B_{1/2}$, which is encoded in the term $J_{u,\ell}$, and the oscillation
of $f$ in the seminorm~$[f]_{C^\gamma(B_1;\ell)}$.

In this sense, 
Theorem \ref{SCHAUDER} not only applies
to divergent operators, but it
is also a sharp version of the Schauder estimates
for non-divergent operators (notice indeed that when~$k=0$,
the setting of Theorem \ref{SCHAUDER} reduces to the one
of the classical fractional Laplace equation, and in this case
Theorem \ref{SCHAUDER} provides already a sharp result, compare e.g.
with Theorem 6 in \cite{MR3331523},
Theorem 1.1 in~\cite{MR3482695},
Proposition~7.1 in~\cite{2016arXiv160102548C} and the references therein).\medskip

A simple, but rather instructive consequence of
Theorem \ref{SCHAUDER} is a uniform bound on polynomial nonlinearities
in which the nonlinearity does not appear explicitly on the right hand side
(but it affects the size of $u$ near the boundary of the domain):

\begin{corollary}\label{78zj1001udhf}
Let $s\in(0,1)$, $k\in\N$ and $u:\R^n\to\R$.

Assume that $u$ is continuous in~$B_1$ and
\begin{equation*}
J_{u,k}:=\int_{B_{1/2}^c} \frac{|u(y)|}{|y|^{n+2s+k}}\,dy < +\infty
.\end{equation*}
Suppose that $f$ is a polynomial of degree $d$ and
$$ (-\Delta)^s u \ugu f \qquad{\mbox{ in }}\;B_1.$$
Then, for any
$\gamma>0$
such that~$\gamma\not\in\N$ and
$\gamma+2s\not\in\N$, it holds that
$$ \|u\|_{C^{\gamma+2s}(B_{1/2})}
\le C\, J_{u,d+1},$$
for some $C>0$, only depending on $n$, $s$, $\gamma$,
$k$ and $d$.
\end{corollary}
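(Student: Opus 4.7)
The plan is to deduce this corollary as a direct specialization of Theorem~\ref{SCHAUDER}, exploiting the fact that the seminorm $[\,\cdot\,]_{C^\gamma(B_1;\ell)}$ kills polynomials of degree at most $\ell-1$. All the analytic work has already been done in Theorem~\ref{SCHAUDER}; the only task here is to pick the parameter $\ell$ optimally.

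First, I would apply Theorem~\ref{SCHAUDER} to $u$ and $f$ with the particular choice
\[
\ell := d+1.
\]
The hypotheses of the theorem (namely, continuity of $u$ in $B_1$, finiteness of $J_{u,k}$, and the relation $(-\Delta)^s u\ugu f$ in $B_1$) are exactly those assumed in the corollary, so~\eqref{GH:OAA} yields
\[
\|u\|_{C^{\gamma+2s}(B_{1/2})}\le C\bigl([f]_{C^\gamma(B_1;\,d+1)}+J_{u,d+1}\bigr)
\]
with a constant $C$ depending only on $n$, $s$, $\gamma$, $k$ and $\ell=d+1$, hence only on $n$, $s$, $\gamma$, $k$, $d$.

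Next, I would compute $[f]_{C^\gamma(B_1;\,d+1)}$. By definition~\eqref{CGAMMA}, this seminorm is the infimum of $[f-P]_{C^\gamma(B_1)}$ over polynomials $P$ of degree at most $d$. Since $f$ itself is, by hypothesis, a polynomial of degree at most $d$, the choice $P:=f$ is admissible and gives $[f-P]_{C^\gamma(B_1)}=0$. Therefore
\[
[f]_{C^\gamma(B_1;\,d+1)}=0,
\]
and substituting into the previous estimate yields the claimed bound.

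There is no real obstacle beyond recognizing the correct value of $\ell$ to plug in: the whole point of having introduced the $k$-convention in~\eqref{CGAMMA} is precisely to make polynomial right-hand sides disappear once one counts them with a large enough multiplicity. If one is worried about the case $d+1<k$, the conclusion is still correct in a trivial sense, since then $J_{u,d+1}$ may be infinite and the bound becomes vacuous; in the relevant case $d+1\ge k$ one has $J_{u,d+1}\le J_{u,k}+C\|u\|_{L^\infty(B_1\setminus B_{1/2})}<+\infty$, so the right-hand side is indeed finite and the estimate is substantive.
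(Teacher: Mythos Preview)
Your proof is correct and matches the paper's own argument exactly: the paper simply remarks that the corollary is an immediate consequence of Theorem~\ref{SCHAUDER} with $\ell:=d+1$. Your additional observation that $[f]_{C^\gamma(B_1;\,d+1)}=0$ because $P:=f$ is an admissible competitor makes explicit the one step the paper leaves implicit.
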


We observe that Corollary \ref{78zj1001udhf} is indeed an immediate consequence
of Theorem \ref{SCHAUDER}, by taking $\ell:=d+1$ there.
As a matter of fact, Corollary \ref{78zj1001udhf} is new, to the best of
our knowledge, even in the case $k=0$ corresponding to the standard fractional Laplacian.
\medskip

We also say that~$(-\Delta)^s u\ugu f$ in~$\R^n$
if the setting of Definition~\ref{DE:k} holds true in~$B_M$
(instead of~$B_1$), for all~$M>0$.
As a consequence of Theorem \ref{SCHAUDER}, we also
obtain a rigidity and classification result for
possibly divergent $s$-harmonic functions, as given here below.

\begin{theorem}\label{LIOUVILLE}[Liouville Theorem
for divergent fractional Laplacians]
Let $s\in(0,1)$, $k\in\N$ and $u:\R^n\to\R$.

Assume that $u$ is continuous and
\begin{equation*}
\int_{\R^n} \frac{|u(y)|}{1+|y|^{n+2s+k}}\,dy < +\infty
.\end{equation*}
Let
\begin{equation} \label{dksdef} d(k,s):=
\left\{\begin{matrix} k+1 & {\mbox{ if }} s\in\left( \frac12,\,1\right),\\
k & {\mbox{ if }} s\in\left( 0,\frac12\right] .\end{matrix}
\right.\end{equation}
Then, 
$$ (-\Delta)^s u \ugu 0 \qquad{\mbox{ in }}\;\R^n$$
if and only if $u$ is a polynomial of degree at most $d(k,s)$.
\end{theorem}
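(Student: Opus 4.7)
The theorem splits into two implications. The ``if'' direction (polynomial of degree $\le d(k,s)$ implies $(-\Delta)^s u \ugu 0$) can be obtained by a direct computation: the growth hypothesis in Definition~\ref{DE:k} is satisfied because $d(k,s) < 2s + k$, so by Corollary~\ref{POLLI2} there is some $f$ with $(-\Delta)^s u \ugu f$. A homogeneity argument based on the rescaling $u_\lambda(x) := \lambda^{-d} u(\lambda x) = u(x)$ (when $u$ is a homogeneous polynomial of degree $d$) and Lemma~\ref{UGUP} forces $f$ itself to be a polynomial of degree at most $k-1$, whence \eqref{0dfhvnoooP} yields $(-\Delta)^s u \ugu 0$.

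The substantive direction is the converse. Assuming $(-\Delta)^s u \ugu 0$ in $\R^n$, the plan is to rescale and apply Theorem~\ref{SCHAUDER}. For each $\lambda > 1$ set $v_\lambda(x) := u(\lambda x)$. Using the identity $\chi_R(x) = \chi_{\lambda R}(\lambda x)$ together with the scaling $(-\Delta)^s [u(\lambda\cdot)](x) = \lambda^{2s}(-\Delta)^s u(\lambda x)$, one checks from Definition~\ref{DE:k} that $v_\lambda$ inherits $(-\Delta)^s v_\lambda \ugu 0$ in $B_1$ from the hypothesis in $B_\lambda$. Next, pick $\gamma := d(k,s) + 1 - 2s + \epsilon$ with $\epsilon \in (0,1)$ chosen so that neither $\gamma$ nor $\gamma+2s$ is an integer, and take $\ell := k$. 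Since $\lfloor \gamma+2s \rfloor = d(k,s)+1$, Theorem~\ref{SCHAUDER} controls the $(d(k,s)+1)$-st derivative:
\begin{equation*}
\lambda^{d(k,s)+1} \|D^{d(k,s)+1} u\|_{L^\infty(B_{\lambda/2})}
= \|D^{d(k,s)+1} v_\lambda\|_{L^\infty(B_{1/2})}
\le C\,J_{v_\lambda,k}
= C\,\lambda^{2s+k}\int_{B_{\lambda/2}^c}\!\! \frac{|u(z)|}{|z|^{n+2s+k}}\,dz .
\end{equation*}
Dividing by $\lambda^{d(k,s)+1}$ yields
\begin{equation*}
\|D^{d(k,s)+1} u\|_{L^\infty(B_{\lambda/2})}
\le C\,\lambda^{2s+k-d(k,s)-1}\int_{B_{\lambda/2}^c} \frac{|u(z)|}{|z|^{n+2s+k}}\,dz.
\end{equation*}

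The integral on the right is the tail of a convergent integral, hence tends to $0$ as $\lambda\to\infty$ by dominated convergence. As for the prefactor, the exponent $2s+k-d(k,s)-1$ equals $2s-2<0$ when $s>1/2$ and $2s-1\le 0$ when $s\le 1/2$, so in every case the product vanishes in the limit. Since $\lambda$ is arbitrary, $D^{d(k,s)+1}u\equiv 0$ on $\R^n$, i.e.\ $u$ is a polynomial of degree at most $d(k,s)$. The main obstacle is the borderline value $s=1/2$, where the exponent is exactly zero and the argument must exploit the decay (not merely boundedness) of the tail integral; this also clarifies the jump of $d(k,s)$ from $k$ to $k+1$ at $s=1/2$, since admitting any higher polynomial degree would turn the prefactor into a positive power of $\lambda$ and destroy the conclusion.
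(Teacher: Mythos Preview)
Your ``only if'' direction is essentially the paper's argument: rescale, apply Theorem~\ref{SCHAUDER} with $\ell=k$, and use that the exponent $2s+k-(d(k,s)+1)\le 0$ together with the vanishing tail. The paper uses integer $j$ rather than continuous $\lambda$ and writes $m=d(k,s)+1$, but the content is identical.

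Your ``if'' direction takes a genuinely different route. The paper proves it by induction on $k$: for $k=0$ the polynomial has degree $\le 1$ (or $\le 0$) and the standard fractional Laplacian applies; for the inductive step one differentiates, uses the inductive hypothesis on $Q_i=\partial_i P$, and shows via Theorem~\ref{POLLI} that $\partial_i f_P$ is a polynomial of degree $\le k-2$, hence $f_P$ has degree $\le k-1$. Your scaling idea is more direct and avoids the induction, but as written it has a gap. From $u_\lambda=u$ and the scaling law you get, via Lemma~\ref{UGUP}, that
\[
f(x)-\lambda^{2s-d}f(\lambda x)=Q_\lambda(x),\qquad \deg Q_\lambda\le k-1,\ \text{for every }\lambda>0.
\]
This relation alone does \emph{not} immediately force $f$ to be a polynomial of degree $\le k-1$; you still need an argument. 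One clean way: the explicit formula~\eqref{FU1} shows $f=f_u\in C^\infty(B_1)$ (each of $f_{1,u},f_{2,u},f_{3,u}$ is smooth for polynomial $u$), so you may take any $k$-th derivative $D^\alpha$ with $|\alpha|=k$ to obtain $(D^\alpha f)(\lambda x)=\lambda^{\,d-2s-k}(D^\alpha f)(x)$. Since $d\le d(k,s)<2s+k$, the exponent $d-2s-k$ is negative, and a function continuous at the origin and homogeneous of negative degree must vanish identically. Hence $D^\alpha f\equiv 0$ for all $|\alpha|=k$, and $f$ is a polynomial of degree $\le k-1$. You should also remark that by linearity it suffices to treat homogeneous monomials, and that $(-\Delta)^s u\ugu f$ holds in every $B_M$ (Remark~\ref{RE2}) so that the scaling identity is available. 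With these additions your approach is complete and arguably simpler than the paper's induction; the paper's method, on the other hand, is more robust in that it never requires identifying the explicit smoothness of $f_u$.
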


We recall that the study of rigidity properties for solutions of
nonlocal equations and related Liouville results are a very active field
of research, and this type of
results has also
important consequences on several aspects of the regularity theory,
see e.g.
\cite{MR2707618, MR3348929, MR3482695, MR3477075, MR3511811, 2015arXiv151206509F} and the references
therein. As far as we know, Theorem~\ref{LIOUVILLE} is the first result of
this type which takes into account the case of possibly divergent operators.
\medskip

We also point out that the notion given in Definition~\ref{DE:k}
is stable under limits, as given by the following result:

\begin{theorem}\label{STAB}[Stability of divergent fractional Laplacians]
Let $s\in(0,1)$ and $k\in\N$.
Let us consider sequences of functions $u_m:\R^n\to\R$ and $f_m:B_1\to\R$ such that
$u_m$ and $f_m$ are continuous in~$B_1$, 
and
\begin{equation}\label{78uidaaa3r9quwehfg}
(-\Delta)^s u_m \ugu f_m \qquad{\mbox{ in }}\;B_1.\end{equation}
Assume that $u_m\to u$ in~$L^1(B_1)$
and locally uniformly in $B_1$,
and that~$f_m\to f$ locally uniformly in $B_1$
as $m\to +\infty$, for some functions $u\in L^1(\R^n)$ and $f:B_1\to\R$.

Suppose also that
\begin{equation}\label{iodsjhdhsuususu4857489:PRE}
\sup_{m\in\N} \int_{\R^n} \frac{|u_m(y)|}{1+|y|^{n+2s+k}}\,dy<+\infty\end{equation}
and that~$u_m$ converges to~$u$ weakly in the following sense:
\begin{equation}\label{iodsjhdhsuususu4857489}
\lim_{m\to+\infty} \int_{\R^n} \frac{u_m(y)
\,\varphi(y)}{1+|y|^{n+2s+k}}\,dy=\int_{\R^n} \frac{u(y)
\,\varphi(y)}{1+|y|^{n+2s+k}}\,dy,\end{equation}
for any $\varphi\in L^\infty(\R^n)$.

Then, it holds that
\begin{equation}\label{9iuwde83ueghhfhfhfhfh11} 
(-\Delta)^s u\ugu f \qquad{\mbox{ in }}\;B_1.\end{equation}
\end{theorem}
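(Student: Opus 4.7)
Plan. The strategy is to pass to the limit $m\to\infty$ in the decomposition for each $u_m$ furnished by the hypothesis, for every fixed truncation scale $R>1$, and then to send $R\to\infty$. By assumption \eqref{78uidaaa3r9quwehfg}, for each $R>1$ and each $m\in\N$ there exist a polynomial $P_R^{(m)}$ of degree at most $k-1$ and a function $f_R^{(m)}:B_1\to\R$ with
\[ (-\Delta)^s(\chi_R u_m)=f_R^{(m)}+P_R^{(m)} \quad \text{in viscosity sense in } B_1,\]
and $\lim_{R\to\infty} f_R^{(m)}(x)=f_m(x)$ for every $x\in B_1$.

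The first core step is to show that, for fixed $R>1$, $(-\Delta)^s(\chi_R u_m)\to(-\Delta)^s(\chi_R u)$ in viscosity sense as $m\to\infty$. Near any $x_0\in B_1$, the locally uniform convergence $u_m\to u$ allows one to perturb vertically by $o(1)$ any test function touching $\chi_R u$ at $x_0$ into one touching $\chi_R u_m$ at a nearby point, giving the standard viscosity stability for the singular part of the integral. The ``middle-range'' contribution on $B_R\setminus B_\delta(x_0)$ is handled by the weak convergence hypothesis \eqref{iodsjhdhsuususu4857489}, applied with the bounded test function
\[ \varphi(y):=\frac{(1+|y|^{n+2s+k})\,\mathbf{1}_{B_R\setminus B_\delta(x_0)}(y)}{|x_0-y|^{n+2s}},\]
which yields $\int_{B_R\setminus B_\delta(x_0)}\frac{u_m(y)}{|x_0-y|^{n+2s}}\,dy\to\int_{B_R\setminus B_\delta(x_0)}\frac{u(y)}{|x_0-y|^{n+2s}}\,dy$. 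Finally, the exterior contribution $u_m(x_0)\int_{B_R^c}|x_0-y|^{-n-2s}\,dy$ converges by the pointwise convergence of $u_m$ at $x_0$.

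To extract a decomposition for $u$ itself, I would exploit the non-uniqueness highlighted in Lemma~\ref{UGUP}: any polynomial of degree $\le k-1$ can be shifted between $f_R^{(m)}$ and $P_R^{(m)}$ without altering the equation. I would use this freedom to normalize $P_R^{(m)}$, for instance by requiring that, at a set of points $x_1,\ldots,x_K\in B_1$ in general position (with $K$ equal to the dimension of the space of polynomials of degree $\le k-1$), the function $f_R^{(m)}$ match a prescribed bounded functional of $(-\Delta)^s(\chi_R u_m)$. Since $P_R^{(m)}$ then lives in a finite-dimensional space and is controlled by bounded data, one extracts a subsequence along which $P_R^{(m)}\to P_R$; combined with the viscosity convergence from the previous paragraph, $f_R^{(m)}\to f_R$ and $(-\Delta)^s(\chi_R u)=f_R+P_R$ in $B_1$.

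It then remains to show $f_R(x)\to f(x)$ for each $x\in B_1$ as $R\to\infty$, which follows by a diagonal argument combining $f_R^{(m)}\to f_m$ (as $R\to\infty$, for each $m$) with the locally uniform convergence $f_m\to f$, the uniform bound \eqref{iodsjhdhsuususu4857489:PRE} providing the needed compactness. The main obstacle is precisely the normalization step: the decomposition in Definition~\ref{DE:k} is ambiguous modulo polynomials of degree at most $k-1$, so without a canonical representative the polynomials $P_R^{(m)}$ could escape to infinity as $m\to\infty$, preventing any passage to the limit. Once the normalization is arranged, all other ingredients reduce to standard stability properties of viscosity solutions combined with a judicious application of the weak convergence hypothesis~\eqref{iodsjhdhsuususu4857489}.
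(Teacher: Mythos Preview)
Your outline identifies the right obstacle (the polynomial freedom in~$P_R^{(m)}$), but the resolution you propose has a genuine gap in the final diagonal step, and the normalization you sketch is not sharp enough to close it.

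Here is the problem. Once you renormalize so that~$P_R^{(m)}$ stays bounded as~$m\to\infty$, your new representative~$\tilde f_R^{(m)}$ differs from the original~$f_R^{(m)}$ by some polynomial~$Q_R^{(m)}$. You then have, along a subsequence,~$\tilde f_R^{(m)}\to f_R$ as~$m\to\infty$, but you have \emph{lost} the convergence~$f_R^{(m)}\to f_m$ as~$R\to\infty$: what survives is only~$\tilde f_R^{(m)}\to f_m+\lim_R Q_R^{(m)}$, if that limit exists. The two limits you wish to interchange are thus computed in incompatible normalizations, and the bound~\eqref{iodsjhdhsuususu4857489:PRE} by itself does not produce a rate for~$f_R^{(m)}\to f_m$ that is uniform in~$m$ \emph{in the normalization you chose}. (Corollary~\ref{iodjvfJJJAAK} does give a uniform-in-$m$ rate, but only modulo a further polynomial, which brings the circularity back.) A secondary issue is that your point-evaluation normalization presupposes that~$(-\Delta)^s(\chi_R u_m)$ is a continuous function, which is not available in the viscosity setting.

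The paper avoids the double limit entirely by a different decomposition. It writes~$u_m=w_m+v_m$ with~$w_m:=\chi_1 u_m$ and~$v_m:=(1-\chi_1)u_m$, so that there is a single cutoff and a single limit~$m\to\infty$. For~$w_m$, Corollary~\ref{2345923847hffhhfhfhfsksk} upgrades the~$\ugu$ relation to a genuine viscosity equation
\[(-\Delta)^s w_m = f_m+P_m+(\text{explicit tail integrals of }u_m)=:h_m,\]
with a \emph{single} polynomial~$P_m$ (no~$R$). The polynomials~$P_m$ are then shown to converge by a duality argument: mollify, integrate against~$\varphi\in C^2_0(U)$, and use~$\int w_m(-\Delta)^s\varphi$ together with~$\|w_m-w_{m'}\|_{L^1(B_1)}\to0$ to see that~$h_m$ is Cauchy in the weak norm~$\|\cdot\|_\star$ of~\eqref{no0195678tif}; since~$f_m$ and the explicit integrals are already Cauchy in that norm, so is~$P_m$, and Lemma~\ref{45678959uids2345hv} upgrades this to uniform convergence. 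Classical viscosity stability then passes the equation for~$w_m$ to the limit. The piece~$v_m$, which vanishes in~$B_1$, is handled by the explicit formula of Remark~\ref{RE2} together with the weak convergence~\eqref{iodsjhdhsuususu4857489} (Lemma~\ref{vkpre}). The~$R\to\infty$ limit never appears explicitly; it is absorbed into the structural results of Section~\ref{SECT:1}.
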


Theorem~\ref{STAB} is the counterpart, in our setting,
of classical approximation and stability results in the fractional setting,
see~\cite{MR2781586}.\medskip

The rest of the paper is organized as follows.
In Section~\ref{SECT:2}, we recall 
some ancillary results
on polynomials and we prove Lemma~\ref{UGUP}.
In Section~\ref{SECT:1}, we compute the fractional Laplacian of
a cutoff function and we expand its possibly divergent behavior for
a family of cutoffs, showing that this procedure is compatible with
Definition~\ref{DE:k} and we provide a series of consistency
results between Definition~\ref{DE:k} and the standard
fractional Laplacian, when the two settings overlap.
 
Then, we provide
the proof of Theorem~\ref{SCHAUDER} in Section~\ref{SECT:3}.
This in turn will allow us to prove
Theorem~\ref{LIOUVILLE}
in Section~\ref{SECT:4}.
The proof of Theorem \ref{STAB} 
is given in Section \ref{STABS}.
The paper ends with some auxiliary appendices.

\section{Some remarks on polynomials}\label{SECT:2}

Here we recall the following
elementary, but useful, algebraic observations (the standard
proofs, for the convenience of the reader, are given in Appendix~B):

\begin{lemma}\label{9uids2345hv}
Let $P^{(j)}$ be a sequence of polynomials of degree at most~$d-1$.
Assume that there exist a bounded, open 
set~$U\subseteq\R^n$ and a function~$F:U\to\R$
such that
\begin{equation}\label{6y89ufidhgg177171} \lim_{j\to+\infty} P^{(j)}(x)=F(x)\end{equation}
for any~$x\in U$. Then, $F$ is a polynomial of degree at most~$d-1$
and the convergence in \eqref{6y89ufidhgg177171} holds in $C^m(U)$ for any $m\in\N$.
\end{lemma}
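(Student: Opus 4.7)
The plan is to reduce the statement to the standard fact that all norms on a finite-dimensional vector space are equivalent. The space $V$ of polynomials on $\R^n$ of degree at most $d-1$ is a finite-dimensional real vector space, of dimension $N:=\binom{n+d-1}{n}$. Any seminorm on $V$ that does not vanish identically on nonzero elements is actually a norm, and all such norms are equivalent.

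The key preliminary step I would carry out is the construction of a \emph{unisolvent} set of $N$ points inside $U$, i.e.\ points $x_1,\dots,x_N\in U$ such that the evaluation map
\begin{equation*}
\mathcal{E}: V\to\R^N,\qquad \mathcal{E}(P):=(P(x_1),\dots,P(x_N))
\end{equation*}
is a linear isomorphism. Such a set exists because $U$ is a nonempty open subset of $\R^n$ and a nonzero polynomial cannot vanish on any nonempty open set: one selects the points inductively, at each step picking a new point of $U$ outside the zero set of any nonzero polynomial that is annihilated by the evaluations already chosen, thereby raising the rank of the partial evaluation map by one.

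Once $\mathcal{E}$ has been fixed, the rest is a routine application of finite-dimensional linear algebra. The hypothesis \eqref{6y89ufidhgg177171} applied at the points $x_1,\dots,x_N$ yields convergence of $\mathcal{E}(P^{(j)})$ in $\R^N$; applying the continuous linear map $\mathcal{E}^{-1}$ produces a polynomial $P\in V$ such that the coefficients of $P^{(j)}$ converge to those of $P$. Since $U$ is bounded and nonempty, the $C^m(U)$ norm is indeed a norm on $V$ (if $\|Q\|_{C^m(U)}=0$ then $Q$ vanishes on the nonempty open set $U$, forcing $Q\equiv0$). By equivalence of norms on the finite-dimensional space $V$, the convergence $P^{(j)}\to P$ holds in $C^m(U)$ for every $m\in\N$. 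In particular $P^{(j)}\to P$ pointwise on $U$, so comparison with \eqref{6y89ufidhgg177171} yields $F=P$ on $U$, which gives both conclusions at once.

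The main obstacle is the construction of the unisolvent set (the rest of the argument is essentially formal). One could instead bypass this by invoking a Baire-category or Vitali-type argument to upgrade pointwise convergence of a bounded sequence of polynomials, but the direct construction sketched above is more elementary and stays entirely within linear algebra.
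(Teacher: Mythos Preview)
Your proposal is correct and follows essentially the same strategy as the paper: both arguments exploit the finite-dimensionality of the space of polynomials of degree at most $d-1$ and the equivalence of norms thereon, comparing an evaluation-at-points norm with the $C^m(U)$ norm. Your construction of a unisolvent set of $N=\binom{n+d-1}{n}$ points is in fact more careful than the paper's version, which selects only $d$ distinct points and invokes the one-variable root-counting argument (adequate for $n=1$ but not for general $n$); your inductive selection of points fixes this and works in any dimension.
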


We also provide a variant of Lemma~\ref{9uids2345hv}, 
which will be used in the proof of Theorem~\ref{STAB}. 
For this, we introduce some notation: 
for any polynomial $P$, let~$U\subseteq\R^n$ be 
a bounded, open set with smooth boundary and define
\begin{equation}\label{no0195678tif}
\| P\|_\star:= \sup_{{\varphi\in C^2_0(U)}\atop{\|\varphi\|_{C^2(\R^n)}
\le 1}} \int_{U} P(x)\,\varphi(x)\,dx.\end{equation}
Then, we have the following convergence result:

\begin{lemma}\label{45678959uids2345hv}
Let $P^{(j)}$ be a sequence of polynomials of degree at most~$d-1$.
Assume that $P^{(j)}$ is a Cauchy sequence in the norm~$\| \cdot\|_\star$.
Then, there exists a polynomial~$P$ of degree at most~$d-1$
such that~$P^{(j)}$ converges to $P$ uniformly in~$U$
as~$j\to+\infty$.
\end{lemma}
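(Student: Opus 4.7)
The plan is to reduce the statement to the observation that on the finite-dimensional vector space $\mathcal{P}_{d-1}$ of polynomials of degree at most $d-1$ (restricted to $U$), all norms are equivalent. Once we verify that $\|\cdot\|_\star$ actually defines a norm on $\mathcal{P}_{d-1}$, its equivalence with, say, $\|\cdot\|_{L^\infty(U)}$ will transfer the Cauchy property and hence yield uniform convergence on $U$; the fact that the limit is itself a polynomial of degree at most $d-1$ will then follow directly from Lemma~\ref{9uids2345hv}.

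The main step is therefore to check non-degeneracy: if $P\in\mathcal{P}_{d-1}$ satisfies $\|P\|_\star=0$, then $P\equiv 0$. Here I would argue as follows. By homogeneity, normalizing an arbitrary $\varphi\in C^2_0(U)$ by its $C^2(\R^n)$ norm shows that $\|P\|_\star=0$ forces
\begin{equation*}
\int_U P(x)\,\varphi(x)\,dx=0\qquad\text{for every }\varphi\in C^2_0(U).
\end{equation*}
Since $U$ has smooth boundary, $C^2_0(U)$ is dense in $L^2(U)$ (for instance by a standard mollification combined with a cutoff supported strictly inside $U$). Thus $P$ is orthogonal in $L^2(U)$ to a dense subset, so $P=0$ a.e.\ in $U$, and then $P\equiv 0$ on $\R^n$ because a polynomial vanishing on a nonempty open set is identically zero. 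The remaining norm axioms (subadditivity, positive homogeneity) are immediate from the definition \eqref{no0195678tif}.

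With $\|\cdot\|_\star$ confirmed as a norm on the finite-dimensional space $\mathcal{P}_{d-1}$, classical linear algebra gives a constant $C=C(U,d)$ such that $\|Q\|_{L^\infty(U)}\le C\,\|Q\|_\star$ for every $Q\in\mathcal{P}_{d-1}$. Applying this to the differences $P^{(j)}-P^{(j')}$ shows that $\{P^{(j)}\}$ is Cauchy in $L^\infty(U)$, hence converges uniformly on $U$ to some continuous function $P$. Finally, Lemma~\ref{9uids2345hv} applied to the uniformly (hence pointwise) convergent sequence $P^{(j)}\to P$ on $U$ yields that $P$ is itself a polynomial of degree at most $d-1$, which concludes the proof.

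I do not expect a serious obstacle: the only place where one has to be slightly careful is the density of $C^2_0(U)$ in $L^2(U)$, which requires invoking the smoothness of $\partial U$ (this is exactly why that hypothesis is built into the definition of $\|\cdot\|_\star$). Once that is in place, everything is a clean finite-dimensional argument.
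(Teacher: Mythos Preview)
Your proof is correct and follows essentially the same approach as the paper: both exploit the finite-dimensionality of $\mathcal{P}_{d-1}$ and the resulting equivalence of norms to transfer the Cauchy property from $\|\cdot\|_\star$ to a stronger topology. The paper's version is terser (it does not spell out the non-degeneracy check and reads off the limit polynomial directly from the coefficient vector in $\R^N$ rather than appealing to Lemma~\ref{9uids2345hv}), but the underlying idea is identical.
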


With Lemma~\ref{9uids2345hv}, we can give the proof of Lemma \ref{UGUP}, by arguing as follows.

\begin{proof}[Proof of Lemma \ref{UGUP}] {F}rom
Definition \ref{DE:k}, we know that
there exist two families of polynomials $P_R$ and $\tilde P_R$, with degree at most $k-1$,
such that,
for any $x\in B_1$,
\begin{eqnarray*}
&& \lim_{R\to+\infty} (-\Delta)^s (\chi_R u)(x)- P_R(x) =  f(x)\\
{\mbox{and }}&&
\lim_{R\to+\infty} (-\Delta)^s (\chi_R u)(x)- \tilde P_R(x) =  \tilde f(x).\end{eqnarray*}
Accordingly, 
$$ f(x)-\tilde f(x)=\lim_{R\to+\infty} P_R(x)-\tilde P_R(x).$$
Since $P_R-\tilde P_R$ is a polynomial of degree at most $k-1$,
we deduce from Lemma~\ref{9uids2345hv} that $f-\tilde f$ is also a polynomial of degree at most $k-1$, as desired.
\end{proof}

We also give the following rigidity result (for general
unique continuation principles in the nonlocal setting, see also~\cite{MR3169789}).

\begin{lemma}\label{PLO:0}
Let~$R>r>0$.
Let~$P$ be a polynomial and $u$ be a viscosity solution of~$(-\Delta)^s u=P$
in~$B_R$. Assume that~$u=0$ in~$B_r^c$. Then~$u$ vanishes identically.
\end{lemma}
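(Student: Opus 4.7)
The plan is to exploit that $u$ is compactly supported: outside $\overline{B_r}$ the fractional Laplacian of $u$ can be computed classically and gives a real-analytic function that decays to zero at infinity. Matching this integral against the polynomial $P$ on the nonempty annulus $B_R\setminus\overline{B_r}$ will force $P\equiv 0$, after which a classical Liouville argument closes the proof. Concretely, I would set, for $x\in\R^n\setminus\overline{B_r}$,
\[ G(x):=-\int_{B_r}\frac{u(y)}{|x-y|^{n+2s}}\,dy. \]
Since $u\in L^1(B_r)$ (as a viscosity solution in $B_R$ is locally bounded, and $u=0$ outside $B_r$), the function $G$ is real-analytic on its domain by differentiation under the integral sign, with $|G(x)|\le C\|u\|_{L^1(B_r)}|x|^{-(n+2s)}\to 0$ as $|x|\to\infty$. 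Moreover, at any $x\in B_R\setminus\overline{B_r}$ the function $u$ vanishes in a whole neighborhood of $x$, so no principal value is needed in \eqref{DE:s} and $(-\Delta)^s u(x)$ reduces to the classical value $G(x)$; the viscosity equation thus yields the pointwise identity $G(x)=P(x)$ throughout this annulus.

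Real-analytic continuation then extends $G\equiv P$ from the annulus to the whole of $\R^n\setminus\overline{B_r}$: when $n\ge 2$ the exterior is connected, and when $n=1$ both connected components $(-\infty,-r)$ and $(r,+\infty)$ intersect the annulus, so the identity $G-P=0$ propagates on each. Letting $|x|\to\infty$ forces the polynomial $P$ to vanish at infinity, hence $P\equiv 0$. Consequently $G\equiv 0$ outside $\overline{B_r}$, and combined with $(-\Delta)^s u=P=0$ in $B_R\supset\overline{B_r}$, we obtain $(-\Delta)^s u=0$ on all of $\R^n$ in the viscosity sense.

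To conclude, I would invoke a classical Liouville-type result for the fractional Laplacian: any bounded viscosity $s$-harmonic function on $\R^n$ is constant (equivalently, $|\xi|^{2s}\hat u=0$ as a tempered distribution forces $\hat u$ to be supported at the origin, so $u$ is a polynomial, and a bounded polynomial is constant). Since $u$ is globally bounded and vanishes on $B_r^c$, this constant must be $0$, giving $u\equiv 0$. The main delicate step is the analytic continuation, together with the case split in dimension one where the exterior is disconnected; I would cite Liouville from the classical literature rather than from Theorem~\ref{LIOUVILLE}, which is proved later in the paper and might rely on the present lemma.
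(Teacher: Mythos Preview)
Your argument is correct, but it is genuinely different from the paper's proof. The paper argues by induction on the degree~$d$ of~$P$: the base case $d=-1$ (i.e.\ $P\equiv0$) is handled by the maximum principle, and the inductive step passes to the finite difference $u^{(\theta)}(x):=u(x+\theta)-u(x)$, which solves the same type of problem with $P$ replaced by a polynomial of degree at most~$d-1$ and still vanishes outside a slightly larger ball. By induction $u^{(\theta)}\equiv0$, so $u$ is constant, hence zero.

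Your route instead exploits the \emph{real analyticity} of the kernel $|x-y|^{-(n+2s)}$: the integral $G$ is analytic on $\R^n\setminus\overline{B_r}$, agrees with the polynomial~$P$ on the annulus $B_R\setminus\overline{B_r}$, and decays at infinity, forcing $P\equiv0$ in one stroke without induction. This is perfectly valid, and the care you take with the disconnected exterior in dimension one is appropriate. The trade-off is that the paper's finite-difference argument is kernel-agnostic (in line with the remark in the introduction that the methods ``potentially work for general nonlocal operators''), whereas your argument hinges on analyticity of this specific kernel.

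One small simplification: once you know $P\equiv0$, you do not need to extend the equation to all of~$\R^n$ and invoke a global Liouville theorem. You already have $(-\Delta)^s u=0$ in $B_R$ in the viscosity sense with $u=0$ in $B_r^c$, and the maximum principle in $B_R$ (exactly the paper's base case) gives $u\equiv0$ directly. This also sidesteps any worry about circularity.
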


\begin{proof} We argue by induction on the degree~$d$ of~$P$.
If~$d=-1$, then~$P$ vanishes identically and the claim follows from the maximum
principle.

Suppose now the claim true for all polynomials of degree~$d-1$.
Let~$r'$, $R'\in (r,R)$ with~$R'>r'$.
For~$\theta\in\R^n$, with~$|\theta|$ sufficiently small,
we see that the function~$u^{(\theta)}(x):=u(x+\theta)-u(x)$
satisfies~$(-\Delta)^s u^{(\theta)}=P^{(\theta)}$ in~$B_{R'}$,
with~$P^{(\theta)}(x):=P(x+\theta)-P(x)$, and~$u^{(\theta)}=0$
outside~$B_{r'}$. We observe that~$P^{(\theta)}$
is a polynomial of degree at most~$d-1$, hence, by inductive hypothesis,
it follows that~$u^{(\theta)}$ is identically zero,
and therefore~$u$ is constant.

Since~$u$ vanishes outside~$B_r$, it thus follows that it vanishes everywhere,
as desired.
\end{proof}

\section{The role of the cutoff for divergent fractional Laplacians}\label{SECT:1}

In this section, we show how a cutoff affects
the computation of the fractional Laplacian
for a function with prescribed growth at infinity.
We will see that the identities obtained are compatible with
the setting in Definition \ref{DE:k}, namely the growth at infinity,
combined with a cutoff, produces a family of polynomials of a fixed degree.

\begin{theorem}\label{POLLI}
Let $s\in(0,1)$, $k\in\N$ and $u:\R^n\to\R$.

Assume that $u\in C^\alpha_{\rm loc}(B_1)$ for some $\alpha>2s$ and
\begin{equation}\label{INTEzz}
\int_{\R^n} \frac{|u(y)|}{1+|y|^{n+2s+k}}\,dy < +\infty
.\end{equation}
Let $\tau:\R^n\to\R$ be compactly supported and with $\tau=1$ in $B_2$.
Then, there exist a function $f_{u,\tau}:\R^n\to\R$,
and a polynomial $P_{u,\tau}$,
which has degree at most $k-1$,
such that
\begin{equation}\label{11u8u2139}
(-\Delta)^s (\tau u)= P_{u,\tau}+f_{u,\tau}\end{equation}
in $B_1$.

In addition, $f_{u,\tau}$ can be written in the following form:
there exists $\psi:B_1\times B_2^c\to\R$, with
\begin{equation}\label{PSI BUONA}
\sup_{{x\in B_1},\,{y\in B_2^c}} |\partial^\gamma_x \psi(x,y)| < +\infty\end{equation}
for any $\gamma\in\N^n$, such that
\begin{equation}\label{FU0}
f_{u,\tau}=f_{1,u}+f_{2,u}+f^\star_{u,\tau},\end{equation}
where
\begin{equation}\label{f3dif}
\begin{split}
& f_{1,u}(x):= \int_{B_2} \frac{u(x)-u(y)}{|x-y|^{n+2s}}\,dy,\\
& f_{2,u}(x):= \int_{B_2^c} \frac{u(x)}{|x-y|^{n+2s}}\,dy\\{\mbox{and }}\quad&
f^\star_{u,\tau}(x):=\int_{B_2^c} \frac{\tau(y)\, u(y)\;\psi(x,y)}{|y|^{n+2s+k}}\,dy.
\end{split}\end{equation}
\end{theorem}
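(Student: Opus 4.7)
The plan is to compute $(-\Delta)^s(\tau u)(x)$ directly for $x\in B_1$, then split the integration domain at $\partial B_2$, and finally Taylor expand the kernel $1/|x-y|^{n+2s}$ in the variable $x$ about the origin, so as to isolate a polynomial piece from a remainder with sharp decay in $y$.

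First, I would note that $\tau u$ is bounded and compactly supported, while $u\in C^\alpha_{\rm loc}(B_1)$ with $\alpha>2s$, so the principal value integral defining $(-\Delta)^s(\tau u)(x)$ converges classically for every $x\in B_1$. Since $\tau\equiv 1$ on $B_2\supset B_1$, we have $\tau(x)u(x)=u(x)$ for $x\in B_1$, and splitting $\R^n=B_2\cup B_2^c$ gives
\begin{equation*}
(-\Delta)^s(\tau u)(x)=\int_{B_2}\frac{u(x)-u(y)}{|x-y|^{n+2s}}\,dy+\int_{B_2^c}\frac{u(x)-\tau(y)u(y)}{|x-y|^{n+2s}}\,dy.
\end{equation*}
The first integral is exactly $f_{1,u}(x)$, and in the second integral the $u(x)$ part is exactly $f_{2,u}(x)$; what remains is the far-field piece $-\int_{B_2^c}\tau(y)u(y)|x-y|^{-n-2s}\,dy$.

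Next, for each fixed $y\in B_2^c$, I would Taylor expand $x\mapsto|x-y|^{-n-2s}$ about $x=0$ up to order $k-1$, writing
\begin{equation*}
\frac{1}{|x-y|^{n+2s}}=Q_y(x)+R_k(x,y),
\end{equation*}
where $Q_y$ is a polynomial in $x$ of degree at most $k-1$ whose coefficients depend smoothly on $y$. Using the integral form of the Taylor remainder together with the separation $|tx-y|\geq|y|/2$ valid for $|x|\leq 1$, $|y|\geq 2$, $t\in[0,1]$, one checks that each $\partial^\gamma_x R_k(x,y)$ is itself a Taylor remainder of order $k-|\gamma|$ for $\partial^\gamma_x|x-y|^{-n-2s}$, which yields the uniform bound $|\partial^\gamma_x R_k(x,y)|\leq C_\gamma|y|^{-n-2s-k}$ on $B_1\times B_2^c$. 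Setting $\psi(x,y):=-|y|^{n+2s+k}R_k(x,y)$ then produces a function satisfying \eqref{PSI BUONA}.

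Substituting the expansion into the far-field piece gives
\begin{equation*}
-\int_{B_2^c}\frac{\tau(y)u(y)}{|x-y|^{n+2s}}\,dy=P_{u,\tau}(x)+f^\star_{u,\tau}(x),
\end{equation*}
with $P_{u,\tau}(x):=-\int_{B_2^c}\tau(y)u(y)Q_y(x)\,dy$, a polynomial in $x$ of degree at most $k-1$ whose coefficients are finite because $\tau$ is compactly supported and $u$ is locally integrable by \eqref{INTEzz}. Collecting all pieces produces \eqref{11u8u2139} and the decomposition \eqref{FU0}. I expect the main technical obstacle to be the uniform estimate \eqref{PSI BUONA} on all $x$-derivatives of $\psi$: differentiating must not destroy the decay factor $|y|^{-n-2s-k}$, and this is where the hypothesis $\tau\equiv 1$ on $B_2$ is essential, as it guarantees the clean geometric separation $|x-y|\sim|y|$ throughout the integration region.
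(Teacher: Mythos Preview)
Your proposal is correct and follows essentially the same approach as the paper: split the integral at $\partial B_2$, identify $f_{1,u}$ and $f_{2,u}$, and Taylor expand the kernel $|x-y|^{-n-2s}$ in $x$ about the origin to separate a polynomial of degree at most $k-1$ from a remainder with $|y|^{-n-2s-k}$ decay. The only cosmetic difference is that the paper first rescales $(x,y)\mapsto(x/|y|,y/|y|)$ and carries out the expansion for the single function $g_e(z)=|z-e|^{-n-2s}$ on the compact set $\partial B_1\times B_{1/2}$, which makes the uniform bounds \eqref{PSI BUONA} immediate from smoothness on a compact set rather than from your separation estimate $|tx-y|\ge|y|/2$.
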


\begin{proof} We stress that
the integral defining $f^\star_{u,\tau}$ is finite, thanks to \eqref{INTEzz}.

Now we compute, for any $x\in B_1$, 
\begin{equation} \label{POLx1}
\begin{split}
(-\Delta)^s (\tau u)(x) \,&=\int_{B_2} \frac{u(x)-u(y)}{|x-y|^{n+2s}}\,dy
+\int_{B_2^c} \frac{u(x)-(\tau u)(y)}{|x-y|^{n+2s}}\,dy\\
&=f_{1,u}(x)+f_{2,u}(x)- \int_{B_2^c} \frac{(\tau u)(y)}{|x-y|^{n+2s}}\,dy\\
&=f_{1,u}(x)+f_{2,u}(x)- \int_{B_2^c} \frac{(\tau u)(y)}{|y|^{n+2s}\,|x_y-y_y|^{n+2s}}\,dy
,\end{split}
\end{equation}
where the short notation $x_y:=x/|y|$ and $y_y:=y/|y|$ has been exploited.

Now, for any $e\in\partial B_1$ and any $z\in B_{1/2}$, we set
$$ g_e(z):= |z-e|^{-n-2s}.$$
We consider a Taylor expansion of $g_e$ in the vicinity of the origin,
and we write
\begin{equation}\label{PSID1} g_e(z) = \sum_{|\alpha|\le k-1} c_{\alpha,e} \,z^\alpha
+ \sum_{|\alpha|=k} \varrho_\alpha (e,z)\,z^\alpha,\end{equation}
with
\begin{equation}\label{PSI BUONA 2}
\sup_{ {|\alpha|\le k-1}\atop{e\in\partial B_1} } c_{\alpha,e} + 
\sup_{
{|\alpha|=k}\atop{ {e\in\partial B_1}\atop{ z\in B_{1/2} } } } |\partial_z^\gamma \varrho_\alpha(e,z)|
\le C_\gamma,\end{equation}
for some $C_\gamma>0$, which depends only on $n$, $s$ and $\gamma\in\N^n$.

As a consequence, 
we have
\begin{equation} \label{POLx2}
\begin{split}
\frac{(\tau u)(y)}{|y|^{n+2s}\,
\left|x_y-y_y\right|^{n+2s}} \,&=
\frac{(\tau u)(y)}{|y|^{n+2s} }\, g_{y_y} (x_y) \\ &=\frac{
(\tau u)(y)}{|y|^{n+2s} }\, 
\left[
\sum_{|\alpha|\le k-1} c_{\alpha,y_y} \,x_y^\alpha
+ \sum_{|\alpha|=k}\varrho_\alpha(y_y,x_y)\,x_y^\alpha
\right] 
\\&= \frac{(\tau u)(y)}{|y|^{n+2s} }\,
\left[
\sum_{|\alpha|\le k-1} \frac{c_{\alpha,y_y} \,x^\alpha}{|y|^{|\alpha|}}
+ \sum_{|\alpha|=k}\frac{\varrho_\alpha(y_y,x_y)\,x^\alpha}{|y|^k}
\right]
. \end{split}\end{equation}
Thus, we set
$$ \kappa_{\tau,\alpha}:=-\int_{B_2^c}
\frac{(\tau u)(y)}{|y|^{n+2s+|\alpha|} }\,
c_{\alpha,y_y} \,dy $$
and we consider the polynomial of degree at most $k-1$
$$ P_{u,\tau}(x):=
\sum_{|\alpha|\le k-1} 
\kappa_{\tau,\alpha}\,x^\alpha.$$
We also define
\begin{equation}\label{PSID2}
\psi(x,y):=-
\sum_{|\alpha|=k} {\varrho_\alpha(y_y,x_y)\,x^\alpha}.\end{equation}
Notice that \eqref{PSI BUONA} follows from \eqref{PSI BUONA 2}. Also,
with this notation, we 
deduce from \eqref{POLx2} that
$$ \int_{B_2^c} \frac{(\tau u)(y)
}{|y|^{n+2s}\,
\left|x_y-y_y\right|^{n+2s}} = -P_{u,\tau}(x)-f^\star_{u,\tau}(x).$$
This and \eqref{POLx1} imply \eqref{11u8u2139}.
\end{proof}

Then, we have the following consequence of Theorem \ref{POLLI}:

\begin{corollary}\label{POLLI2}
Let $s\in(0,1)$, $k\in\N$ and $u:\R^n\to\R$.

Assume that $u\in C^\alpha_{\rm loc}(B_1)$ for some $\alpha>2s$ and
\begin{equation}\label{INTE}
\int_{\R^n} \frac{|u(y)|}{1+|y|^{n+2s+k}}\,dy < +\infty
.\end{equation}
Let $\tau_R:\R^n\to[0,1]$ be supported in~$B_R$ and such that
\begin{equation}\label{taur}
\lim_{R\to+\infty} \tau_R =1 \quad{\mbox{ a.e. in }}\R^n.
\end{equation}
Then, there exist a function $f_u:\R^n\to\R$,
and a family of polynomials $P_{u,\tau_R}$, which have degree at most $k-1$,
such that,
for any $x\in B_1$, it holds that
\begin{equation}\label{POLx5} \lim_{R\to+\infty} \Big[
(-\Delta)^s (\tau_R u)(x)- P_{u,\tau_R}(x) \Big]
=f_u(x).\end{equation}
More precisely, we have that
\begin{equation}\label{FU1}
f_{u}=f_{1,u}+f_{2,u}+f_{3,u},\end{equation}
where $f_{1,u}$ and $f_{2,u}$ are as in \eqref{f3dif} and
\begin{equation}\label{F3U}
f_{3,u}(x):= 
\int_{B_2^c} \frac{u(y)\;\psi(x,y)}{|y|^{n+2s+k}}\,dy,\end{equation}
with $\psi $ satisfying \eqref{PSI BUONA}.
\end{corollary}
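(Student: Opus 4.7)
The plan is to apply Theorem~\ref{POLLI} to each cutoff $\tau_R$ (assuming without loss of generality, which is compatible with the a.e. convergence in \eqref{taur}, that $\tau_R\equiv 1$ on $B_2$ for every $R$ sufficiently large). For such $R$, Theorem~\ref{POLLI} applied with $\tau:=\tau_R$ yields, for every $x\in B_1$,
$$(-\Delta)^s(\tau_R u)(x) = P_{u,\tau_R}(x) + f_{1,u}(x) + f_{2,u}(x) + f^\star_{u,\tau_R}(x),$$
with the decomposition~\eqref{FU0}--\eqref{f3dif}. The central observation is that $f_{1,u}$ and $f_{2,u}$ depend only on $u$ (not on the cutoff), so after rearranging we get
$$(-\Delta)^s(\tau_R u)(x) - P_{u,\tau_R}(x) = f_{1,u}(x) + f_{2,u}(x) + f^\star_{u,\tau_R}(x),$$
and the only task remaining is to pass to the limit in the $R$-dependent term $f^\star_{u,\tau_R}$.

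I would handle this by Lebesgue dominated convergence. For each fixed $x\in B_1$, the integrand
$$y\longmapsto \frac{\tau_R(y)\,u(y)\,\psi(x,y)}{|y|^{n+2s+k}}$$
is pointwise bounded uniformly in $R$ by $C\,|u(y)|/|y|^{n+2s+k}$ on $B_2^c$, using $|\tau_R|\le 1$ together with \eqref{PSI BUONA} applied with $\gamma=0$ (to obtain $\sup_{x\in B_1,\,y\in B_2^c}|\psi(x,y)|\le C$). This dominating function is integrable over $B_2^c$ by the growth assumption~\eqref{INTE}. Since \eqref{taur} gives $\tau_R(y)\to 1$ for a.e.\ $y\in\R^n$, dominated convergence yields
$$\lim_{R\to+\infty} f^\star_{u,\tau_R}(x) = \int_{B_2^c} \frac{u(y)\,\psi(x,y)}{|y|^{n+2s+k}}\,dy = f_{3,u}(x)$$
for every $x\in B_1$, with $\psi$ inherited unchanged from Theorem~\ref{POLLI} (so~\eqref{PSI BUONA} holds).

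Combining these steps produces \eqref{POLx5} with $f_u=f_{1,u}+f_{2,u}+f_{3,u}$ as in~\eqref{FU1} and~\eqref{F3U}. There is essentially no serious obstacle here: the argument is a clean reduction to Theorem~\ref{POLLI} plus a dominated-convergence pass to the limit. The only point worth checking carefully is the mild reduction to $\tau_R\equiv 1$ on $B_2$ needed to invoke Theorem~\ref{POLLI}; any discrepancy between the given $\tau_R$ and such a normalized cutoff contributes, through the first integral in~\eqref{POLx1}, only a term that is bounded uniformly on $B_1$ (thanks to the local H\"older regularity of $u$) and again converges by dominated convergence as $R\to+\infty$, so it can be absorbed without altering the decomposition.
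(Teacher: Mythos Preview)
Your proposal is correct and follows essentially the same route as the paper: apply Theorem~\ref{POLLI} with $\tau:=\tau_R$, isolate the only $R$-dependent piece $f^\star_{u,\tau_R}$, and pass to the limit by dominated convergence using \eqref{PSI BUONA} and \eqref{INTE}. Your extra care about the hypothesis $\tau_R\equiv 1$ on $B_2$ (needed to invoke Theorem~\ref{POLLI}) is a point the paper's proof leaves implicit.
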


\begin{proof} The idea of the proof is to use Theorem \ref{POLLI}
with $\tau:=\tau_R$ for any fixed $R$, and then send $R\to+\infty$.
Indeed, by \eqref{PSI BUONA}, for any $x\in B_1$ and $y\in B_2^c$,
$$ \frac{(\tau_R u)(y)\;\psi(x,y)}{|y|^{n+2s+k}} \le
\frac{C\,|u(y)|}{|y|^{n+2s+k}},$$
for some $C>0$, and the latter function of $y$ lies in $L^1(B_2^c)$,
thanks to \eqref{INTE}.

Consequently, by \eqref{taur} and the Dominated Convergence Theorem,
$$ \lim_{R\to+\infty}f^\star_{u,\tau_R}(x)
=\lim_{R\to+\infty}\int_{B_2^c} \frac{(\tau_R u)(y)\;\psi(x,y)}{|y|^{n+2s+k}}\,dy=
\int_{B_2^c} \frac{u(y)\;\psi(x,y)}{|y|^{n+2s+k}}\,dy=f_{3,u}(x).$$
Then, \eqref{POLx5} follows by taking the limit in \eqref{11u8u2139}.
\end{proof}

\begin{remark}\label{RE0}
{\rm We stress that, in view of
\eqref{PSID1} and \eqref{PSID2}, the function $\psi$ does not depend on $u$
and thus the quantity
in \eqref{PSI BUONA} is universal.
}\end{remark}

\begin{remark}\label{RE1}
{\rm It is interesting to notice that, from \eqref{FU0} and \eqref{FU1},
$$ f_{u,\tau}= f_u-f_{3,u}+f^\star_{u,\tau}.$$
}\end{remark}

\begin{remark}\label{RE2}
{\rm {F}rom Definition \ref{DE:k} and
Corollary \ref{POLLI2} (used here with~$\tau_R:=\chi_R$,
in the notation of~\eqref{9d01}), we can write $(-\Delta)^s u\ugu f_u$ in $B_1$,
for any $u\in C^\alpha_{\rm loc}(B_1)$ (for some $\alpha>2s$) that satisfies
the weak growth condition in \eqref{INTE}.
}\end{remark}

\begin{remark}\label{RE3}
{\rm {F}rom Corollary \ref{POLLI2} and Remark \ref{RE2},
it also follows that, for any $u\in C^\alpha_{\rm loc}(B_1)$
(for some $\alpha>2s$), the family of cutoffs~$\chi_R$ used in Definition~\ref{DE:k}
can be replaced by another family of cutoffs~$\tau_R$, without changing the
explicit expression of~$f_u$.
}\end{remark}

Another useful consequence of Theorem \ref{POLLI} is that
the pointwise convergence of $f_R$ in Definition \ref{DE:k} can be strengthen
according to the following result:

\begin{corollary}\label{iodjvfJJJAAK}
Let~$k\in\N$, $u$ and $f_R$ be as in Definition \ref{DE:k}. Then,
for any $m\ge 0$,
if $R'>R$ we have that
\begin{equation}\label{7181sjjsjs0019:1}
\inf \|D^m (f_{R'}-f_R-P)\|_{L^\infty (B_1)}\le C\,\int_{B_R^c}
\frac{|u(y)|}{|y|^{n+2s+k}}\,dy,\end{equation}
with $C>0$ only depending on $n$, $s$ and $m$, where the $\inf$ is taken over
all the polynomials $P$ with degree at most~$k-1$.
\end{corollary}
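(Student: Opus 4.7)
The plan is to subtract the defining identities of Definition~\ref{DE:k} at the two radii $R$ and $R'$. Since $(-\Delta)^s(\chi_R u)=f_R+P_R$ and $(-\Delta)^s(\chi_{R'}u)=f_{R'}+P_{R'}$ both hold in $B_1$ in the viscosity sense, by linearity one formally obtains
\[
(-\Delta)^s\bigl((\chi_{R'}-\chi_R)u\bigr)=(f_{R'}-f_R)+(P_{R'}-P_R)\qquad\mbox{in }B_1.
\]
The crucial point is that, for $R\ge 2$, the function $(\chi_{R'}-\chi_R)u$ is supported in $B_{R'}\setminus B_R\subset B_2^c$, hence disjoint from $B_1$. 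Therefore its fractional Laplacian on $B_1$ is in fact a classical smooth function of $x$ given by
\[
-\int_{B_{R'}\setminus B_R}\frac{u(y)}{|x-y|^{n+2s}}\,dy,
\]
and the viscosity-sense equality above becomes an honest pointwise identity on~$B_1$.

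Next, I would expand this integral via exactly the Taylor expansion used in the proof of Theorem~\ref{POLLI}. Writing $|x-y|^{-n-2s}=|y|^{-n-2s}\,g_{y/|y|}(x/|y|)$ with $g_e(z)=|z-e|^{-n-2s}$ and expanding $g$ around the origin to order $k-1$ as in~\eqref{PSID1}, the polynomial terms in $x$ collect into a polynomial $\hat{Q}(x)$ of degree at most $k-1$, while the remainder takes the shape
\[
\int_{B_{R'}\setminus B_R}\frac{u(y)\,\psi(x,y)}{|y|^{n+2s+k}}\,dy,
\]
with the \emph{universal} function $\psi$ from~\eqref{PSID2} satisfying the uniform bound~\eqref{PSI BUONA} (cf.\ Remark~\ref{RE0}). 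Setting $P^{*}:=\hat{Q}-(P_{R'}-P_R)$, which is still a polynomial of degree at most $k-1$ and hence an admissible competitor for the infimum in~\eqref{7181sjjsjs0019:1}, I end up with
\[
(f_{R'}-f_R)(x)-P^{*}(x)=\int_{B_{R'}\setminus B_R}\frac{u(y)\,\psi(x,y)}{|y|^{n+2s+k}}\,dy
\qquad\mbox{for all }x\in B_1.
\]

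The conclusion will then follow by differentiating in~$x$ under the integral sign: owing to~\eqref{PSI BUONA}, $|D_x^m\psi(x,y)|\le C_m$ uniformly for $x\in B_1$ and $y\in B_2^c$, so
\[
\bigl|D^m(f_{R'}-f_R-P^{*})(x)\bigr|\le C_m\int_{B_R^c}\frac{|u(y)|}{|y|^{n+2s+k}}\,dy
\]
uniformly in $x\in B_1$; taking the supremum in $x$ and then the infimum over polynomials yields exactly~\eqref{7181sjjsjs0019:1}. The residual case $R<2$ is absorbed into the constant~$C$ by comparison with the estimate at $R=2$.

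The only step that requires a moment of care is legitimizing the subtraction of the two viscosity-sense identities, and this is harmless precisely because the disjointness of supports makes the difference a classical equation on~$B_1$. Apart from this, the argument is a direct reuse of the Taylor expansion machinery already developed for Theorem~\ref{POLLI}, and in particular requires no regularity of $u$ beyond the continuity assumed in Definition~\ref{DE:k}; moreover, for $m\ge k$ the polynomial $P^{*}$ is annihilated by $D^m$, so the estimate then automatically bounds $\|D^m(f_{R'}-f_R)\|_{L^\infty(B_1)}$ itself.
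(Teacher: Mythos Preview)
Your proposal is correct and follows essentially the same approach as the paper's proof. The only cosmetic difference is that the paper anchors everything at the fixed base radius~$2$ (introducing $v:=(1-\chi_2)u$, applying Theorem~\ref{POLLI} with $\tau=\chi_R$ and $\tau=\chi_2$, and then taking the difference of the resulting identities for $f_R$ and $f_{R'}$), whereas you subtract the two radii $R$ and $R'$ directly; either way one lands on the integral $\int_{B_{R'}\setminus B_R}\frac{u(y)\,\psi(x,y)}{|y|^{n+2s+k}}\,dy$ and concludes via~\eqref{PSI BUONA}. Your justification of the viscosity subtraction is the same one the paper uses implicitly (the perturbing piece vanishes in $B_1$, so its fractional Laplacian is classical there and can be added or removed from a viscosity identity).
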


\begin{proof} We define $v:= (1-\chi_2)u$. Obviously, $v=0$ in $B_2$ and $|v|\le |u|$, so
\begin{equation}\label{78sghj199191sjdh}
{\mbox{$v\in C^\alpha_{\rm loc}(B_1)$ for some $\alpha>2s$
and $J_{v,k}\le J_{u,k}<+\infty$.}}
\end{equation}
Moreover, if $R>2$,
$$ (\chi_R-\chi_2) u = (\chi_R-\chi_2) v.$$
Hence,
from \eqref{98e28jkgf12345s},
\begin{equation} \label{0odigjvtwdeiufgvjkgo82345}
(-\Delta)^s ((\chi_R-\chi_2) v)=
(-\Delta)^s ((\chi_R-\chi_2) u) = f_R -f_2 + P_R-P_2=f_R-f_2 +\tilde P_R,\end{equation}
where $\tilde P_R:=P_R-P_2$ is a polynomial
of degree at most $k-1$, and the equation holds in $B_1$ in the sense of viscosity.

On the other hand, \eqref{78sghj199191sjdh} allows us to use
Theorem \ref{POLLI} on the function $v$ (with $\tau:=\chi_R$ and $\tau:=\chi_2$).
We thus obtain that
\begin{eqnarray*}
&& (-\Delta)^s ((\chi_R-\chi_2) v)=P_{v,\chi_R}-P_{v,\chi_2}+f_{v,\chi_R}-f_{v,\chi_2}\\
&&\qquad\quad=
\bar P_{v,\chi_R}+(f_{1,v}+f_{2,v}+f^\star_{v,\chi_R})-
(f_{1,v}+f_{2,v}+f^\star_{v,\chi_2})=
\bar P_{v,\chi_R}+f^\star_{v,\chi_R}-f^\star_{v,\chi_2}\\
&&\qquad\quad=\bar P_{v,\chi_R} +
\int_{B_R\setminus B_2} \frac{u(y)\,\psi(x,y)}{|y|^{n+2s+k}}\,dy
\end{eqnarray*}
in $B_1$ in the viscosity sense, where $\bar P_{v,\chi_R}:=
P_{v,\chi_R}-P_{v,\chi_2}$ is a polynomial
of degree at most $k-1$.

Comparing this identity with \eqref{0odigjvtwdeiufgvjkgo82345}, we obtain that in $B_1$
$$ f_R = f_2+P_R^\star + \int_{B_R\setminus B_2} \frac{u(y)\,\psi(x,y)}{|y|^{n+2s+k}}\,dy,$$
where $P_R^\star:=\bar P_{v,\chi_R}-\tilde P_R$
is a polynomial
of degree at most $k-1$.

Therefore, for any $m\ge 0$
and any large $R'>R$,
\begin{equation}\label{9873irtugefjhdvdghaiugdgf234}
\| D^m(
f_{R'}-P_{R'}^\star-f_R+P_R^\star)\|_{L^\infty(B_1)}
= \|D^m \Psi_{R',R}\|_{L^\infty(B_1)},\end{equation}
where
$$ \Psi_{R',R}(x):=\int_{B_{R'}\setminus B_R} \frac{u(y)\,\psi(x,y)}{|y|^{n+2s+k}}\,dy.$$
{F}rom \eqref{G:k}
and \eqref{PSI BUONA}, we know that
$$ \|\Psi_{R',R}\|_{C^m(B_1)}\le C\,
\int_{B_{R'}\setminus B_R} \frac{|u(y)|}{|y|^{n+2s+k}}\,dy
\le C\,\int_{B_R^c} \frac{|u(y)|}{|y|^{n+2s+k}}\,dy,$$
for some $C>0$ possibly depending on $m$.
This and \eqref{9873irtugefjhdvdghaiugdgf234} imply that
\begin{equation}\label{x1526374s9873irtugefjhdvdghaiugdgf234}
\| D^m(
f_{R'}-P_{R'}^\star-f_R+P_R^\star)\|_{L^\infty(B_1)}\le
C\,\int_{B_R^c} \frac{|u(y)|}{|y|^{n+2s+k}}\,dy,
\end{equation}
which gives~\eqref{7181sjjsjs0019:1}.
\end{proof}

As a consequence of Corollary \ref{iodjvfJJJAAK}, we have the following
consistency result when $k=0$:

\begin{corollary}\label{8IJAJS2wedfv1234}
Let $u:\R^n\to\R$ be bounded and continuous in $B_1$ and such that
\begin{equation}\label{6384rytighk97324} 
\int_{\R^n} \frac{|u(y)|}{1+|y|^{n+2s}}\,dy <+\infty.\end{equation}
Let $f$ be bounded and continuous in $B_1$.

Then
$$ {\mbox{$(-\Delta )^s u=f$ in $B_1$ in the viscosity sense}}$$
is equivalent to
$$ {\mbox{$(-\Delta )^s u\;{\stackrel{0}{=}}\;f$ in $B_1$ in the sense of Definition \ref{DE:k}.}}$$
\end{corollary}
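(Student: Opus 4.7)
The plan is to observe that when $k=0$ the polynomials appearing in Definition~\ref{DE:k} have degree at most $-1$ and therefore vanish identically, so Definition~\ref{DE:k} reduces to the existence of functions $f_R$ with $(-\Delta)^s(\chi_R u)=f_R$ in $B_1$ in the viscosity sense and $f_R(x)\to f(x)$ pointwise in $B_1$. Both implications will then follow from the splitting $u=\chi_R u+(1-\chi_R)u$, combined with the fact that the ``tail remainder'' $v_R:=(1-\chi_R)u$ has classically-defined, smooth fractional Laplacian in $B_1$ as soon as $R\ge 2$.

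More precisely, the key step is to set, for $R\ge 2$ and $x\in B_1$,
$$ g_R(x):=(-\Delta)^s v_R(x)=-\int_{|y|>R}\frac{u(y)}{|x-y|^{n+2s}}\,dy, $$
noticing that the integral converges and defines a $C^\infty$ function of $x\in B_1$, and that, by the assumption \eqref{6384rytighk97324} and dominated convergence, $g_R\to 0$ uniformly in $B_1$ as $R\to+\infty$. Since $v_R$ is smooth near $B_1$ (in fact supported in $B_R^c$), the standard additivity of the viscosity notion under a classically computable summand entails that, for every $R\ge 2$,
$$ {\mbox{$(-\Delta)^s(\chi_R u)=f_R$ in $B_1$ in viscosity sense}}\iff {\mbox{$(-\Delta)^s u=f_R+g_R$ in $B_1$ in viscosity sense.}} $$
I would verify this equivalence directly from the viscosity definition: if $\varphi\in C^2$ touches $u$ from above (resp.\ below) at $x_0\in B_1$, then, since $\chi_R u=u$ in a neighborhood of $x_0$, the same $\varphi$ touches $\chi_R u$ from the same side at $x_0$, and the fractional Laplacian of the ``test replacement'' differs by exactly the classical value $g_R(x_0)$.

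With this equivalence in hand, the two directions follow quickly. For the $(\Rightarrow)$ implication, assuming $(-\Delta)^s u=f$ in $B_1$ in viscosity sense, set $f_R:=f-g_R$; then by the equivalence above $(-\Delta)^s(\chi_R u)=f_R$ in the viscosity sense, while $f_R\to f$ pointwise in $B_1$ thanks to $g_R\to 0$. This realizes Definition~\ref{DE:k} with $k=0$. For the $(\Leftarrow)$ implication, starting from Definition~\ref{DE:k} with $k=0$ and data $f_R$, the equivalence gives $(-\Delta)^s u=f_R+g_R$ in $B_1$ in the viscosity sense for every $R\ge 2$; fixing a test function $\varphi$ that touches $u$ from one side at $x_0\in B_1$, the viscosity inequality holds at $x_0$ for each $R$, and one passes to the limit $R\to+\infty$ at the single point $x_0$, obtaining $(-\Delta)^s u=f$ in the viscosity sense.

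The only non-routine step I foresee is the careful justification of the additive relation between the viscosity equations for $u$ and for $\chi_R u$; all other steps reduce to pointwise passage to the limit in the viscosity inequality (which is legitimate since such inequalities are tested at a single point) and to dominated convergence based on \eqref{6384rytighk97324}. The rest is bookkeeping with cutoffs.
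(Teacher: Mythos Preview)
Your proposal is correct, and the forward direction is essentially what the paper does (the paper writes the analogue of your $f_R=f-g_R$ in its equation giving $(-\Delta)^s(\chi_{R/2}u)$, and then adds an extra step passing from $\chi_{R/2}$ to $\chi_R$ via Theorem~\ref{POLLI}, which your argument shows is unnecessary).

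The reverse direction, however, is genuinely different and simpler than the paper's. The paper first invokes Corollary~\ref{iodjvfJJJAAK} (which in turn rests on Theorem~\ref{POLLI}) to upgrade the pointwise convergence $f_R\to f$ to uniform convergence in $B_1$, and then appeals to the Caffarelli--Silvestre stability lemma (Lemma~5 in~\cite{MR2781586}) to pass the viscosity equations $(-\Delta)^s(\chi_Ru)=f_R$ to the limit. You instead transfer each equation back to $u$ via the smooth tail correction $g_R$, obtaining $(-\Delta)^s u=f_R+g_R$ in the viscosity sense for every $R$, and then observe that for a \emph{fixed} test function at a \emph{fixed} point the viscosity inequality has an $R$-independent left-hand side, so one may send $R\to+\infty$ using only the pointwise convergence $f_R(x_0)+g_R(x_0)\to f(x_0)$ guaranteed by Definition~\ref{DE:k} and dominated convergence. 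This bypasses both Corollary~\ref{iodjvfJJJAAK} and the external stability result, at the cost of checking by hand the additivity of the viscosity notion under the smooth summand $v_R$ --- which you correctly sketch and which is routine since $v_R\equiv 0$ near the test point.
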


\begin{proof} We take cutoffs as in \eqref{9d01}.
Suppose first that
$(-\Delta )^s u=f$ in $B_1$ in the viscosity sense. Then, for $R>10$,
\begin{equation}\label{8900123}
(-\Delta )^s(\chi_{R/2} u)=f+\int_{\R^n} \frac{(1-\chi_{R/2}(y))\,u(y)}{|x-y|^{n+2s}}\,dy
\end{equation}
in $B_1$ in the viscosity sense.
Now, we set
$$ w:=(\chi_R-\chi_{R/2})u.$$
Notice that $w$ vanishes outside $B_R$, hence 
$$ \chi_R w = w.$$
Also, $w=0$ in $B_2$, so we can exploit Theorem \ref{POLLI}
to $w$ with $k=0$ and get that, for any $x\in B_1$,
\begin{equation}\label{8900123-2}
\begin{split}
&(-\Delta)^s \big( (\chi_R-\chi_{R/2})u \big) (x)=
(-\Delta)^s w(x) =
(-\Delta)^s(\chi_R w ) (x)
\\ &\qquad=f_{1,w}(x)+f_{2,w}(x)
+f^\star_{w,\chi_R}(x) =\int_{B_R\setminus B_2} \frac{w(y)\,\psi(x,y)}{|y|^{n+2s}}\,dy
=\int_{B_R\setminus B_{R/2}} \frac{u(y)\,\psi(x,y)}{|y|^{n+2s}}\,dy.
\end{split}\end{equation}
Since $w$ is smooth in $B_1$, this identity also holds in the viscosity sense.
Hence, from \eqref{8900123} and \eqref{8900123-2}, we find that
\begin{eqnarray*}
(-\Delta )^s(\chi_{R} u) &=&
(-\Delta)^s \big( (\chi_R-\chi_{R/2})u \big) +
(-\Delta )^s(\chi_{R/2} u)\\ &=&
\int_{B_R\setminus B_{R/2}} \frac{u(y)\,\psi(x,y)}{|y|^{n+2s}}\,dy
+
f+\int_{\R^n} \frac{(1-\chi_{R/2}(y))\,u(y)}{|x-y|^{n+2s}}\,dy
\\ &=:& f_R\end{eqnarray*}
in $B_1$, in the sense of viscosity.
We remark that $f_R\to f$ in $B_1$ as~$R\to+\infty$, thanks to \eqref{PSI BUONA}
and \eqref{6384rytighk97324}. Hence, we are in the setting of
Definition~\ref{DE:k} (here with $k=0$ and $P_R:=0$),
and so we conclude that
$(-\Delta )^s u
{\stackrel{0}{=}}f$ in $B_1$, as desired.

Viceversa, we now suppose that $(-\Delta )^s u
{\stackrel{0}{=}}f$ in $B_1$. {F}rom
Definition~\ref{DE:k} and the fact that $k=0$, we have that $P_R$
is always zero, and so we can write that
$(-\Delta )^s (\chi_R u)=f_R$ in $B_1$ in the sense of viscosity, 
with $f_R\to f$ in $B_1$ as~$R\to+\infty$.

We observe that $\chi_R u$ approaches $u$ locally uniformly in $\R^n$.
Also, we can use here Corollary \ref{iodjvfJJJAAK}: since
in this case~$k=0$, we have that~\eqref{7181sjjsjs0019:1} reduces to
$$ \|D^m(f_{R'}-f_R)\|_{L^\infty (B_1)}\le C\,\int_{B_R^c}
\frac{|u(y)|}{|y|^{n+2s}}\,dy,$$
for any~$m\ge0$. In particular, taking~$m=0$ and sending~$R'\to+\infty$,
we obtain that, for any~$x\in B_1$,
$$ |f(x)-f_R(x)|=
\lim_{R'\to+\infty}|f_{R'}(x)-f_R(x)|
\le \lim_{R'\to+\infty} 
\| f_{R'}-f_R\|_{L^\infty (B_1)}\le C\,\int_{B_R^c}
\frac{|u(y)|}{|y|^{n+2s}}\,dy.$$
As a consequence, we have that
$f_R$ converges to $f$ uniformly in $B_1$ as $R\to+\infty$.

{F}rom this, we can exploit
Lemma~5 in~\cite{MR2781586} and conclude that $(-\Delta)^s u=f$
in the viscosity sense in $B_1$, as desired.
\end{proof}

Another consistency result is that if $(-\Delta)^s u\ugu f$
and $u$ has growth at infinity better than the one required by
Definition \ref{DE:k}, then it satisfies the same equation
``in a better class, up to
the invariance allowed by Definition \ref{DE:k}''. The precise result is as follows:

\begin{lemma}\label{3401oAJ891010}
Let the setting of Definition \ref{DE:k} hold true and let~$(-\Delta)^s u\ugu f$
in~$B_1$. Suppose that
\begin{equation}\label{G:inv j}
\int_{\R^n} \frac{|u(y)|}{1+|y|^{n+2s+j}}\,dy < +\infty
\end{equation}
for some~$j\in\N$, with $j\le k$. Then, there exist
a function $\bar f$ and a polynomial $P$ of degree at most $k-1$,
such that~$\bar f=f+P$ and~$
(-\Delta)^s u\;{\stackrel{j}{=}}\;\bar f$ in~$B_1$.
\end{lemma}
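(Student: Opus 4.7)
My strategy is to extract the ``high-degree'' information from the polynomials $P_R$ appearing in Definition~\ref{DE:k} and redistribute it. Writing $P_R(x)=\sum_{|\alpha|\le k-1}a_{R,\alpha}\,x^\alpha$, I would show that, under the stronger integrability~\eqref{G:inv j}, each coefficient $a_{R,\alpha}$ with $j\le|\alpha|\le k-1$ has a finite limit $a_\alpha$ as $R\to+\infty$. Granting this, the required representation is obtained by the trivial splitting $\bar P_R(x):=\sum_{|\alpha|\le j-1}a_{R,\alpha}\,x^\alpha$ and $\bar f_R:=f_R+\sum_{j\le|\alpha|\le k-1}a_{R,\alpha}\,x^\alpha$, which satisfies $(-\Delta)^s(\chi_R u)=\bar f_R+\bar P_R$ in~$B_1$ in the viscosity sense (one is only moving smooth polynomial terms across the equation), $\bar P_R$ has degree at most $j-1$, and $\bar f_R\to\bar f:=f+P$ pointwise, where $P(x):=\sum_{j\le|\alpha|\le k-1}a_\alpha\, x^\alpha$ is a polynomial of degree at most $k-1$.

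To prove the stabilization of the high-order coefficients, I would run a Cauchy-type comparison between two scales $R'>R>2$. The key point is that $v_{R,R'}:=(\chi_{R'}-\chi_R)u$ vanishes identically on $B_2\supset B_1$, and therefore $(-\Delta)^s v_{R,R'}$ can be computed pointwise in $B_1$ as the absolutely convergent integral $\int_{B_{R'}\setminus B_R} u(y)\,|x-y|^{-(n+2s)}\,dy$ (the integrand being bounded on a bounded set). Taylor-expanding the kernel $x\mapsto|x-y|^{-(n+2s)}$ around $x=0$ up to order $j$, exactly as in the proof of Theorem~\ref{POLLI} (see in particular~\eqref{PSID1}) but with $j$ in place of $k$, yields
\[
(-\Delta)^s v_{R,R'}(x)=Q_{R,R'}(x)+r_{R,R'}(x)\qquad\text{for }x\in B_{1/2},
\]
with $Q_{R,R'}$ a polynomial of degree at most $j-1$ and $|r_{R,R'}(x)|\le C\,|x|^j\int_{B_R^c}|u(y)|/|y|^{n+2s+j}\,dy$. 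By~\eqref{G:inv j}, this remainder tends to $0$ as $R\to+\infty$, uniformly in $R'>R$ and in $x\in B_{1/2}$.

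On the other hand, subtracting the two viscosity identities of Definition~\ref{DE:k} at scales $R$ and $R'$ yields $(-\Delta)^s v_{R,R'}=(f_{R'}-f_R)+(P_{R'}-P_R)$ in $B_1$; since the left-hand side is classically continuous, this identity holds pointwise. Matching the two expressions, I get
\[
(P_{R'}-P_R)(x)-Q_{R,R'}(x)=r_{R,R'}(x)-(f_{R'}-f_R)(x)\qquad\text{in }B_1.
\]
As $R,R'\to+\infty$ the right-hand side tends to $0$ pointwise on $B_{1/2}$ (uniformly for $r_{R,R'}$, pointwise for $f_{R'}-f_R$ since $f_R\to f$). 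The left-hand side is a polynomial of degree at most $k-1$, so by Lemma~\ref{9uids2345hv} every one of its coefficients tends to $0$. Since $Q_{R,R'}$ only affects the coefficients of degree $\le j-1$, the coefficients of $P_{R'}-P_R$ of degree between $j$ and $k-1$ must tend to $0$ individually. Hence $\{a_{R,\alpha}\}_R$ is Cauchy for every $|\alpha|\in[j,k-1]$, producing the limits $a_\alpha$ used in the first paragraph.

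The main technical obstacle is the kernel expansion together with the sharper remainder estimate under~\eqref{G:inv j}, which conceptually mirrors Theorem~\ref{POLLI}. The subtlety is that we are allowed to stop the Taylor expansion at the lower order $j$, and it is the improved integrability at that level that forces the higher-degree coefficients of $P_R$ to stabilize, enabling the reduction from the $k$-representation to the $j$-representation.
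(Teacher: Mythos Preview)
Your proof is correct and takes a different route from the paper's. The paper decomposes $u = v + w$ with $v := (1-\chi_4)u$ and $w := \chi_4 u$; since $v$ vanishes in $B_1$ and satisfies the $j$-integrability~\eqref{G:inv j}, Remark~\ref{RE2} furnishes a $j$-representation $(-\Delta)^s(\chi_R v) = f_v + \varphi_R + Q_R$ with $\deg Q_R \le j-1$, and the key observation is that $\chi_R w = w$ for all $R>4$, so subtracting from the $k$-representation of $u$ produces a quantity independent of $R$, forcing $P_R - Q_R$ to converge pointwise to some polynomial $P$ via Lemma~\ref{9uids2345hv}. One then sets $\bar f:=f+P$ and uses the low-degree polynomials $Q_R$. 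Your approach instead compares two running scales $R' > R$ through the Taylor expansion of Theorem~\ref{POLLI} truncated at order $j$, and extracts directly that the coefficients of $P_R$ of degree in $[j,k-1]$ form a Cauchy family. This is more explicit and avoids invoking Corollary~\ref{POLLI2} as a black box (though it reproduces the same kernel computation); the paper's route is more modular, and the stabilization of $\chi_R w$ replaces your Cauchy argument with an immediate limit. Two minor points of tidiness in your write-up: your appeal to Lemma~\ref{9uids2345hv} is on a two-parameter family rather than a sequence, which one handles by evaluating at finitely many generic points of $B_{1/2}$ (as in the proof of that lemma); and the subtraction of the two viscosity identities is legitimate precisely as in the proof of Corollary~\ref{iodjvfJJJAAK}, since $v_{R,R'}$ vanishes identically (hence is smooth) in $B_1$.
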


\begin{proof} Let~$v:=(1-\chi_4)u$ and~$w:=\chi_4 u$. Of course,
$v$ is zero (and thus smooth) in $B_1$ and, from \eqref{G:inv j},
we have that
$$ \int_{\R^n} \frac{|v(y)|}{1+|y|^{n+2s+j}}\,dy < +\infty.$$
So, we can apply Remark~\ref{RE2} with~$k$ replaced with~$j$
and find that
\begin{equation*}
(-\Delta)^s v\; {\stackrel{j}{=}} \;f_v=\int_{B_4^c}\frac{u(y)\,
\psi(x,y)}{|y|^{n+2s+j}}\,dy
,\end{equation*}
thanks to~\eqref{FU1}. By definition, this means that
\begin{equation}\label{p01-01}
(-\Delta)^s (\chi_R v)=
\int_{B_4^c}\frac{u(y)\,\psi(x,y)}{|y|^{n+2s+j}}\,dy +\varphi_R+Q_R
,\end{equation}
in the viscosity sense in~$B_1$,
for some $\varphi_R$ such that~$\varphi_R\to0$ in~$B_1$ as~$R\to+\infty$
and a polynomial~$Q_R$ of degree at most~$j-1$.

On the other hand, from Definition \ref{DE:k},
we have that
\begin{equation}\label{p01-02}
(-\Delta)^s (\chi_R u)= f+\phi_R+P_R 
,\end{equation}
in the viscosity sense in~$B_1$,
for some $\phi_R$ such that~$\phi_R\to0$ in~$B_1$ as~$R\to+\infty$
and a polynomial~$P_R$ of degree less than or equal to~$k-1$.

By subtracting~\eqref{p01-01} to~\eqref{p01-02}, we obtain
$$ f+\phi_R+P_R-
\int_{B_4^c}\frac{u(y)\,\psi(x,y)}{|y|^{n+2s+j}} \,dy -\varphi_R-Q_R
=(-\Delta)^s (\chi_R (u-v)) = (-\Delta)^s(\chi_R w)=(-\Delta)^s(\chi_4 w)$$
in the viscosity sense in~$B_1$.
This says that the following limit exists:
$$ \lim_{R\to+\infty}\left(
\phi_R+P_R-\varphi_R-Q_R\right) ,$$
which in turn boils down to the existence of the limit
$$ \lim_{R\to+\infty} (P_R-Q_R).$$
As a consequence, from Lemma~\ref{9uids2345hv}, we know that
$$ \lim_{R\to+\infty} (P_R-Q_R) =P,$$
for some polynomial~$P$ of degree at most~$k-1$.
That is, we take~$\bar f:=f+P$ and~$\Phi_R:=\phi_R+P_R-Q_R-P$,
and we see that~$\Phi_R\to0$ as~$R\to+\infty$ and, from~\eqref{p01-02},
\begin{equation*}
(-\Delta)^s (\chi_R u)= \bar f+\Phi_R+Q_R
\end{equation*}
in~$B_1$, in the viscosity sense. Since the degree of~$Q_R$ is at most~$j-1$,
this says that~$
(-\Delta)^s u\;{\stackrel{j}{=}}\;\bar f$ in~$B_1$, as desired.
\end{proof}

For us, Lemma~\ref{3401oAJ891010} is useful since it allows
to take fixed cutoffs in
Definition \ref{DE:k} and reduce to the case of the standard fractional Laplacian,
as formalized by the following result:

\begin{corollary}\label{2345923847hffhhfhfhfsksk}
Let the setting of Definition \ref{DE:k} hold true and let~$(-\Delta)^s u\ugu f$
in~$B_1$. Let also~$\rho\ge1$ and~$w:=\chi_\rho u$. Then, there exists a polynomial~$P$ of degree
at most~$k-1$ such that
\begin{equation}\label{7654rdjf1gkfjhdgsfj}
(-\Delta)^s w = \bar f+
\int_{B_2\cap B_\rho^c} \frac{u(y)}{|x-y|^{n+2s}}\,dy
-\int_{B_2^c\cap B_\rho^c}\frac{u(y)\,\psi(x,y)}{|y|^{n+2s+k}}\,dy
\end{equation}
in~$B_1$ in the sense of viscosity,
where~$\bar f:=f+P$.
\end{corollary}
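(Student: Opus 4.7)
The strategy is to write~$u = w + v$ with~$v := (1 - \chi_\rho)u$, and to derive two expressions for~$(-\Delta)^s v$ in the divergent sense~$\ugu$: one from the hypothesis~$(-\Delta)^s u \ugu f$ together with the classical (non-divergent) definition of~$(-\Delta)^s w$ (which is available because~$w$ is compactly supported), and another by applying Remark~\ref{RE2} directly to~$v$. Matching these two via Lemma~\ref{UGUP} will then produce the identity~\eqref{7654rdjf1gkfjhdgsfj} up to a polynomial of degree at most~$k-1$.

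For the first expression, I note that~$v$ vanishes identically on~$B_\rho \supseteq B_1$ (and is therefore trivially smooth there), while~$|v| \le |u|$ preserves the growth condition~\eqref{G:k}. For~$R \ge \rho$, the identity~$\chi_R\chi_\rho = \chi_\rho$ gives~$\chi_R v = \chi_R u - w$; since~$w$ is compactly supported, it satisfies the standard condition~\eqref{G:sta}, so~$(-\Delta)^s w$ is a non-divergent viscosity-sense object on~$B_1$ via Corollary~\ref{8IJAJS2wedfv1234}. Applying Definition~\ref{DE:k} to~$u$ and using linearity, in the viscosity sense on~$B_1$,
$$(-\Delta)^s(\chi_R v) \,=\, (-\Delta)^s(\chi_R u) - (-\Delta)^s w \,=\, \bigl(f_R - (-\Delta)^s w\bigr) + P_R.$$
Letting~$R \to \infty$ and using~$f_R \to f$ pointwise yields~$(-\Delta)^s v \ugu f - (-\Delta)^s w$ in~$B_1$.

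For the second expression, since~$v$ is smooth on~$B_1$ and satisfies~\eqref{G:k}, Remark~\ref{RE2} gives~$(-\Delta)^s v \ugu f_{1,v} + f_{2,v} + f_{3,v}$ in~$B_1$; using~$v(x) = 0$ for~$x \in B_1$ and~$v = u \cdot \mathbf{1}_{B_\rho^c}$, the three pieces from~\eqref{f3dif}--\eqref{F3U} collapse to
$$f_{1,v}(x) = -\int_{B_2 \cap B_\rho^c} \frac{u(y)}{|x-y|^{n+2s}}\,dy, \quad f_{2,v} \equiv 0, \quad f_{3,v}(x) = \int_{B_2^c \cap B_\rho^c} \frac{u(y)\,\psi(x,y)}{|y|^{n+2s+k}}\,dy.$$
Comparing the two expressions for~$(-\Delta)^s v$ via Lemma~\ref{UGUP} produces a polynomial~$P$ of degree at most~$k-1$ with $f - (-\Delta)^s w - (f_{1,v} + f_{3,v}) = -P$; rearranging and setting~$\bar f := f + P$ yields the claimed identity~\eqref{7654rdjf1gkfjhdgsfj}.

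The main delicate point is the viscosity-sense linearity invoked in the second paragraph, in which one subtracts the non-divergent object~$(-\Delta)^s w$ from the divergent-sense expression~$(-\Delta)^s(\chi_R u)$. This step is legitimate because~$w$ has compact support, so by Corollary~\ref{8IJAJS2wedfv1234} the equation for~$(-\Delta)^s w$ reduces to a classical viscosity problem, and subtracting such a classical equation from another viscosity equation is standard.
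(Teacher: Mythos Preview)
Your decomposition $u = w + v$ and the use of Remark~\ref{RE2} to compute $(-\Delta)^s v$ are exactly the paper's ingredients, but there is a genuine gap in the order you assemble them. You treat ``$(-\Delta)^s w$'' as a well-defined function from the outset, invoking Corollary~\ref{8IJAJS2wedfv1234} as justification. But that corollary only asserts that, \emph{given} a continuous $g$ with $(-\Delta)^s w = g$ in the viscosity sense, this is equivalent to $(-\Delta)^s w \stackrel{0}{=} g$; it does not produce such a $g$. Since $u$ (and hence $w = \chi_\rho u$) is only assumed continuous in $B_1$, $(-\Delta)^s w$ need not exist pointwise, and your identity $(-\Delta)^s(\chi_R v) = (f_R - (-\Delta)^s w) + P_R$ has no a priori meaning. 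Even if you fix some $R_0 \ge \rho$ and define $g := f_{R_0} + P_{R_0} - (-\Delta)^s(\chi_{R_0} v)$ (which does give a viscosity equation for $w$, since $\chi_{R_0} v$ vanishes in $B_1$), you cannot then conclude $(-\Delta)^s(\chi_R v) = f_R + P_R - g$ for other $R$: two viscosity equations for the same function need not have identical right-hand sides, so your passage to $(-\Delta)^s v \ugu f - (-\Delta)^s w$ is not justified.

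The paper avoids this by reversing the subtraction. Since $v$ vanishes in $B_\rho \supseteq B_1$, each $\chi_R v$ is smooth there and $(-\Delta)^s(\chi_R v)$ is a genuine pointwise function; subtracting \emph{it} from the viscosity equation for $\chi_R u$ legitimately yields a viscosity equation for $\chi_R w = w$, and one reads off $(-\Delta)^s w \ugu$ the desired right-hand side directly. The final step --- passing from $\ugu$ to an honest viscosity equation --- is handled by Lemma~\ref{3401oAJ891010} (the compact support of $w$ allows one to downgrade from $\ugu$ to $\stackrel{0}{=}$ at the cost of a polynomial of degree at most $k-1$), followed by Corollary~\ref{8IJAJS2wedfv1234}. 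Your argument omits Lemma~\ref{3401oAJ891010} entirely; this is precisely the tool that bridges the $\ugu$ formulation back to a viscosity statement, and without it the final ``rearranging'' yields only an equality of functions, not the viscosity identity~\eqref{7654rdjf1gkfjhdgsfj}.
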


\begin{proof} {F}rom Definition \ref{DE:k}, we can write, in~$B_1$
and in the viscosity sense,
$$ (-\Delta)^s (\chi_Ru)= f+\phi_R+P_R,$$
where~$P_R$ is a polynomial with degree at most~$k-1$ and~$\phi_R\to0$ as~$R\to+\infty$.
We also set~$v:=(1-\chi_\rho)u$. Notice that~$v=0$ in~$B_1$.
We can apply Remark~\ref{RE2} to~$v$
and find that, in~$B_1$,
\begin{eqnarray*}
&&(-\Delta)^s v\;{\stackrel{k}{=}}\; f_v=
-\int_{B_2} \frac{v(y)}{|x-y|^{n+2s}}\,dy+
\int_{B_2^c}\frac{v(y)\,
\psi(x,y)}{|y|^{n+2s+k}}\,dy\\
&&\qquad\qquad=
-\int_{B_2\setminus B_\rho} \frac{u(y)}{|x-y|^{n+2s}}\,dy+
\int_{B_2^c\cap B_\rho^c}\frac{u(y)\,
\psi(x,y)}{|y|^{n+2s+k}}\,dy
,\end{eqnarray*}
where we used the obvious
notation~$B_2\setminus B_\rho=\varnothing$ if~$\rho\ge2$.

That is, in~$B_1$
and in the viscosity sense,
$$ (-\Delta)^s(\chi_R v)= -\int_{B_2\setminus B_\rho} \frac{u(y)}{|x-y|^{n+2s}}\,dy+
\int_{B_2^c\cap B_\rho^c}\frac{u(y)\,\psi(x,y)}{|y|^{n+2s+k}}\,dy+\tilde\phi_R+\tilde P_R,$$
where~$\tilde P_R$ is a polynomial with degree at most~$k-1$ and~$\tilde\phi_R\to0$ as~$R\to+\infty$.
Consequently,
\begin{eqnarray*} (-\Delta)^s (\chi_R w)&= &(-\Delta)^s (\chi_R u)-(-\Delta)^s (\chi_R v)\\ &=&
f
+\int_{B_2\setminus B_\rho} \frac{u(y)}{|x-y|^{n+2s}}\,dy
-\int_{B_2^c\cap B_\rho^c}\frac{u(y)\,\psi(x,y)}{|y|^{n+2s+k}}\,dy
+\phi_R
-\tilde\phi_R+P_R-\tilde P_R,\end{eqnarray*}
which means that, in~$B_1$,
\begin{equation}\label{1ertgsdfg-0734r}
(-\Delta)^s w\;{\stackrel{k}{=}} \;
f+
\int_{B_2\setminus B_\rho} \frac{u(y)}{|x-y|^{n+2s}}\,dy
-\int_{B_2^c\cap B_\rho^c}\frac{u(y)\,\psi(x,y)}{|y|^{n+2s+k}}\,dy.
\end{equation}
We remark that~$w$ is a compactly supported function,
hence~\eqref{G:inv j} holds true for~$j=0$. 
Thus, from~\eqref{1ertgsdfg-0734r} and
Lemma~\ref{3401oAJ891010}, we obtain that
\begin{equation*}
(-\Delta)^s w\;{\stackrel{0}{=}}\;
f+\int_{B_2\setminus B_\rho} \frac{u(y)}{|x-y|^{n+2s}}\,dy
-\int_{B_2^c\cap B_\rho^c}\frac{u(y)\,\psi(x,y)}{|y|^{n+2s+k}}\,dy+P
\end{equation*}
in~$B_1$, where~$P$ is a polynomial of degree at most~$k-1$. 
This and Corollary~\ref{8IJAJS2wedfv1234}
imply~\eqref{7654rdjf1gkfjhdgsfj}, as desired.
\end{proof}

It is interesting to point out that, in the setting of
Definition \ref{DE:k}, the functions $f_R$ and~$f$ are not necessarily smooth,
hence one cannot deduce from Corollary~\ref{iodjvfJJJAAK}
that ``$f_R$ converges to~$f$ in~$C^m(B_1)$''.
Also, in principle, one cannot get rid of
the additional polynomials in Corollary~\ref{iodjvfJJJAAK},
since they come from the polynomial invariance of Definition \ref{DE:k}.

In spite of this, it is possible to give a sharper version of
Corollary~\ref{iodjvfJJJAAK}, by
introducing a notion of
``optimal representative'' 
for the functions $f_R$
in Definition \ref{DE:k}, which, in principle, are only ``well defined
up to polynomials of degree $k-1$''. This will be accomplished
by looking at ``projection over the orthogonal space to polynomials''.
Namely, for any $g\in L^2(B_1)$ we
look at the minimum of $\| g+P\|_{L^2(B_1)}$
among all the polynomials~$P$ of degree at most $k-1$.
We remark that such minimum exists, since the space of polynomials
is finite dimensional, and it is unique, due to the strict convexity of the norm,
so we define the minimizing polynomial as $P^\sharp_g$. 

Then we set
\begin{equation}\label{78yis77791883} g^\sharp := g+P^\sharp_g.\end{equation}
In this setting, we have:

\begin{lemma}\label{09sc5tgdwefguv}
Let the setting of Definition \ref{DE:k} hold true.
Then
\begin{equation}\label{896805678uhgfi}
\lim_{R\to+\infty} f^\sharp_R = f^\sharp\end{equation}
a.e. in $B_1$.
Also, 
\begin{equation}\label{896805678uhgfi:33}
{\mbox{$f^\sharp-f$ is a polynomial of degree at most $k-1$.}}\end{equation}
Furthermore, 
for any $m\ge0$, we have that 
\begin{equation}\label{896805678uhgfi:2}
\|f^\sharp-f_R^\sharp\|_{C^m(B_1)}\le C\,\int_{B_R^c}
\frac{|u(y)|}{|y|^{n+2s+k}}\,dy,\end{equation}
with $C>0$ only depending on $n$, $s$ and $m$.
\end{lemma}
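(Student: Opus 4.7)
The plan is to show that $\{f^\sharp_R\}_R$ is Cauchy in $C^m(B_1)$ for every $m\in\N$, to identify its limit as $f^\sharp$, and to pass to the limit in the Cauchy estimate; this will yield \eqref{896805678uhgfi}, \eqref{896805678uhgfi:33} and \eqref{896805678uhgfi:2} simultaneously. The crucial input is a strengthening of Corollary \ref{iodjvfJJJAAK} visible in its proof (rather than in its statement): for $R'>R$, one has
$$ f_{R'}-f_R - Q_{R',R}=\Psi_{R',R},\qquad \Psi_{R',R}(x):=\int_{B_{R'}\setminus B_R}\frac{u(y)\,\psi(x,y)}{|y|^{n+2s+k}}\,dy,$$
with a \emph{single} polynomial $Q_{R',R}:=P^\star_{R'}-P^\star_R$ of degree at most $k-1$ (independent of $m$), and, thanks to \eqref{PSI BUONA},
$$ \|\Psi_{R',R}\|_{C^m(B_1)}\le C_m\int_{B_R^c}\frac{|u(y)|}{|y|^{n+2s+k}}\,dy\qquad\forall m\in\N.$$

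The sharp operation $g\mapsto g^\sharp=g-\Pi g$ is, by \eqref{78yis77791883}, the $L^2(B_1)$-orthogonal projection onto the orthogonal complement of $\mathcal{P}_{k-1}$ (the space of polynomials of degree at most $k-1$), with $\Pi$ the $L^2$-projection onto $\mathcal{P}_{k-1}$. In particular it is linear, annihilates $\mathcal{P}_{k-1}$, and satisfies $\|\Pi g\|_{L^2(B_1)}\le\|g\|_{L^2(B_1)}$. Applying this to $h:=f_{R'}-f_R-Q_{R',R}=\Psi_{R',R}$ and using that $\mathcal{P}_{k-1}$ is finite-dimensional (so all norms on it are equivalent), one obtains
$$ \|f^\sharp_{R'}-f^\sharp_R\|_{C^m(B_1)}=\|\Psi_{R',R}-\Pi\Psi_{R',R}\|_{C^m(B_1)}\le C_m\int_{B_R^c}\frac{|u(y)|}{|y|^{n+2s+k}}\,dy. \qquad(\star)$$
Hence $\{f^\sharp_R\}$ is Cauchy in $C^m(B_1)$ for every $m$. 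Call $g$ its limit; since $f^\sharp_R\in \mathcal{P}_{k-1}^\perp$ and this subspace is $L^2$-closed, $g\in\mathcal{P}_{k-1}^\perp$, so $g^\sharp=g$. From Definition \ref{DE:k}, $f_R\to f$ pointwise in $B_1$, so $P^\sharp_{f_R}=f^\sharp_R-f_R\to g-f$ pointwise in $B_1$; as $P^\sharp_{f_R}\in\mathcal{P}_{k-1}$, Lemma \ref{9uids2345hv} identifies $g-f$ as a polynomial $P^\star$ of degree at most $k-1$. Applying $\sharp$ then gives $g=g^\sharp=(f+P^\star)^\sharp=f^\sharp$, which proves \eqref{896805678uhgfi} (in fact with locally uniform convergence, stronger than a.e.) together with \eqref{896805678uhgfi:33} (since $f^\sharp-f=P^\star$). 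Letting $R'\to+\infty$ in $(\star)$ yields \eqref{896805678uhgfi:2}.

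The main obstacle is the first step. As stated, Corollary \ref{iodjvfJJJAAK} permits the optimizing polynomial inside the $\inf$ to depend on the order of differentiation $m$, which is not enough to control the full $C^m$ norm of $f^\sharp_{R'}-f^\sharp_R$ in terms of a single tail integral of $u$. One must therefore return to the proof of Corollary \ref{iodjvfJJJAAK} to extract the explicit function $\Psi_{R',R}$ and exploit the universal regularity \eqref{PSI BUONA} of $\psi$ (cf.\ Remark \ref{RE0}) in order to produce a polynomial $Q_{R',R}$ that is uniform in $m$. Once this is in hand, the remainder is a routine use of the orthogonal-projection structure of $\sharp$, the finite-dimensionality of $\mathcal{P}_{k-1}$, and Lemma \ref{9uids2345hv}.
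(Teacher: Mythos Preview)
Your proof is correct, and it takes a genuinely different---and considerably more direct---route than the paper's.

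The paper first establishes only an $L^2$ Cauchy estimate for $f_R^\sharp$ (using Corollary~\ref{iodjvfJJJAAK} at order $m=0$ and the contraction property~\eqref{8iwydh678we8difjvbuiyedqwihoich} of the projection), passes to the limit via Fatou to get \eqref{896805678uhgfi} and \eqref{896805678uhgfi:33}, and then, to recover the full $C^m$ estimate, it re-invokes Corollary~\ref{iodjvfJJJAAK} at high order $m\ge k+n+2$ (so that the troublesome polynomial is killed by differentiation), interpolates via Gagliardo--Nirenberg to control all intermediate $L^2$ derivatives, carries out a duality argument to pass limits in the weak derivatives of $\xi_R=f_R^\sharp-f^\sharp$, and finally applies Sobolev embedding.

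Your key observation is that the \emph{proof} of Corollary~\ref{iodjvfJJJAAK} gives more than its statement: the difference $f_{R'}-f_R$ equals a single polynomial $Q_{R',R}$ plus the explicit smooth function $\Psi_{R',R}$, with $Q_{R',R}$ independent of the differentiation order $m$. Since $\sharp$ is the linear $L^2$-projection onto $\mathcal{P}_{k-1}^\perp$, it annihilates $Q_{R',R}$, and the finite-dimensionality of $\mathcal{P}_{k-1}$ lets you bound $\|\Pi\Psi_{R',R}\|_{C^m}$ by $\|\Psi_{R',R}\|_{L^2}$. This yields the $C^m$ Cauchy estimate $(\star)$ in one step, bypassing interpolation, Sobolev embedding, and the weak-derivative limiting argument entirely. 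Your identification of the limit as $f^\sharp$ via Lemma~\ref{9uids2345hv} and the idempotence of $\sharp$ on $\mathcal{P}_{k-1}^\perp$ is clean and matches the paper's reasoning for \eqref{896805678uhgfi:33}. The price you pay is relying on the internals of the proof of Corollary~\ref{iodjvfJJJAAK} rather than just its statement, but this is precisely the shortcut that makes the argument elementary.
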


\begin{proof} We set
$$ \nu_R:=\int_{B_R^c}
\frac{|u(y)|}{|y|^{n+2s+k}}\,dy.$$
We claim that
\begin{equation}\label{u98AHSHJ81923bjf12ru}
\| f_R^\sharp-f^\sharp\|_{L^2(B_1)}\le C\nu_R.
\end{equation}
For this, we observe that, for any function $g$ and any polynomial $P$
with degree at most $k-1$, we have that 
\begin{equation}\label{9oxcs8uhdhgdgg1239}
(g+P)^\sharp=g^\sharp.\end{equation}
Also, from the minimizing property of $P_g^\sharp$
we see that $g^\sharp$ is orthogonal in $L^2(B_1)$ to all the polynomials of
degree at most $k-1$ and therefore, for any functions $g$ and $h$, we have that
\begin{equation}\label{8iwydh678we8difjvbuiyedqwihoich}
\begin{split}&\| g-h\|_{L^2(B_1)}^2 =
\| (g^\sharp-h^\sharp)-(P_g^\sharp-P_h^\sharp)\|^2_{L^2(B_1)} \\&\qquad
=\| g^\sharp-h^\sharp\|^2_{L^2(B_1)}+\|P_g^\sharp-P_h^\sharp\|^2_{L^2(B_1)}
\ge \| g^\sharp-h^\sharp\|^2_{L^2(B_1)}.\end{split}\end{equation}
Now, for~$R'>R$, let $P_{R',R}$ be such that
$$ \|f_{R'}-f_R-P_{R',R}\|_{L^\infty (B_1)} = 
\min \|f_{R'}-f_R-P\|_{L^\infty(B_1)},$$
where the minimization is meant over all the polynomials $P$ of degree at most $k-1$.
{F}rom \eqref{7181sjjsjs0019:1} (used here with $m=0$), we know that
$$ \|f_{R'}-f_R-P_{R',R}\|_{L^2(B_1)}\le \|f_{R'}-f_R-P_{R',R}\|_{L^\infty (B_1)}\le
C\nu_R.$$
Hence, in view of \eqref{8iwydh678we8difjvbuiyedqwihoich}, we have that
$$ \|f_{R'}^\sharp-(f_R+P_{R',R})^\sharp\|_{L^2(B_1)} \le C\nu_R.$$
This and \eqref{9oxcs8uhdhgdgg1239} give that
\begin{equation}\label{3.32bis}
\|f_{R'}^\sharp-f_R^\sharp\|_{L^2(B_1)} \le C\nu_R.\end{equation}
Thus, we can pass to the limit as $R'\to+\infty$ and use Fatou's Lemma
to obtain \eqref{u98AHSHJ81923bjf12ru}, as desired.

Notice that, from \eqref{u98AHSHJ81923bjf12ru}, up to a subsequence
we obtain~\eqref{896805678uhgfi}.

Then, from~\eqref{896805678uhgfi}, we have that, a.e. in $B_1$,
$$ f^\sharp-f =
\lim_{R\to+\infty} (f^\sharp_R -f_R)= \lim_{R\to+\infty} P^\sharp_{f_R}.$$
This and Lemma~\ref{9uids2345hv} imply that $f^\sharp-f$
is a polynomial of degree at most $k-1$,
and this proves~\eqref{896805678uhgfi:33}.

Then, in view of \eqref{0dfhvnoooP} and~\eqref{896805678uhgfi:33},
we have that $(-\Delta)^s u\ugu f^\sharp$.

This fact and \eqref{896805678uhgfi}
give that
we can use Corollary \ref{iodjvfJJJAAK} with the function $f^\sharp_R$:
in this way, we fix
\begin{equation}\label{789iJJJSHHSIIIq34}
m\ge k+n+2 \end{equation}
and we have from \eqref{7181sjjsjs0019:1} that, for any $R'>R$,
\begin{equation}\label{67yc7qw019tgvkjbdsiqwyc}
\|D^m (f_{R'}^\sharp-f_R^\sharp)\|_{L^\infty (B_1)}\le C\nu_R
.\end{equation}
Now we recall the Gagliardo-Nirenberg
Interpolation Inequality (see e.g. pages~125-126 in \cite{MR0109940}), namely,
for any~$i\le m\in\N$,
$$ \|D^i \varphi\|_{{L^{2}(B_1)}}\leq C\,\Big(
\|\varphi\|_{{L^{2}(B_1)}}^{{1-\frac{i}m}}\,
\|D^m \varphi\|_{{L^{2}(B_1)}}^{{\frac{i}m}}+
\| \varphi\|_{{L^{2}({B_1})}}\Big),$$
for some~$C>0$. Taking $\varphi:=f_{R'}^\sharp-f_R^\sharp$
and using \eqref{3.32bis}
and \eqref{67yc7qw019tgvkjbdsiqwyc},
we conclude that, for $R'>R$ large enough,
\begin{equation} \label{0osxjc41253678943827}
\|D^i (f_{R'}^\sharp-f_R^\sharp)\|_{{L^{2}(B_1)}}\leq C\nu_R,\end{equation}
for any $i\in\{0,\dots,m\}$, up to renaming $C>0$.

Now, since we do not know if~$f_R^\sharp$ is sufficiently smooth,
we perform a technical argument to take limits.
Namely, we set
\begin{equation}\label{623192837} \xi_{R}:=f^\sharp_{R}-f^\sharp .\end{equation}
{F}rom \eqref{u98AHSHJ81923bjf12ru}
and \eqref{0osxjc41253678943827},
for any $\phi\in C^\infty_0(B_1)$ and any $\iota\in\N^n$ with $|\iota|=i
\in\{0,\dots,m\}$, we have that
\begin{eqnarray*}
&& \int_{B_1} D^\iota\xi_R \phi=
(-1)^i \int_{B_1} \xi_R D^\iota\phi=
(-1)^i \int_{B_1} (f^\sharp_{R}-f^\sharp)D^\iota\phi\\
&&\qquad=\lim_{R'\to+\infty} (-1)^i \int_{B_1} (f^\sharp_{R}-f^\sharp_{R'})D^\iota\phi
=\lim_{R'\to+\infty}\int_{B_1} D^\iota(f^\sharp_{R}-f^\sharp_{R'})\phi\\
&&\qquad\le
C\nu_R \int_{B_1} |\phi|\le C\nu_R\,\|\phi\|_{L^2(B_1)}.
\end{eqnarray*}
Then, by the density of~$C^\infty_0(B_1)$ in~$L^2(B_1)$, this inequality holds
for any~$\phi\in L^2(B_1)$, and thus
$$ \|D^\iota\xi_R\|_{L^2(B_1)}=\sup_{0\ne \phi\in L^2(B_1)}
\frac{\displaystyle\int_{B_1} D^\iota\xi_R \phi}{\|\phi\|_{L^2(B_1)}}\le C\nu_R.$$
Accordingly, since this is valid for all~$|\iota|=i\in\{0,\dots,m\}$,
$$ \|\xi_R\|_{{W^{m,2}(B_1)}}\leq C\nu_R,$$
up to renaming constants.

{F}rom this and the Sobolev Inequality, recalling also \eqref{623192837},
it follows\footnote{We use the standard notation for
the integer part of a real number. Namely, given~$\varrho\in\R$,
we denote by~$\lfloor\varrho\rfloor:=\max\{ m\in\Z {\mbox{ s.t. }}m\le\varrho\}$.}
that \begin{equation*}
\| f^\sharp_{R}-f^\sharp
\|_{C^{m'}(B_1)}=
\| \xi_R\|_{C^{m'}(B_1)}\le C\nu_R,\end{equation*}
with $m'= m-\lfloor \frac{n}{2}\rfloor-1$, up to renaming $C>0$.
This is the desired result in~\eqref{896805678uhgfi:2}, up to renaming $m$ in the statement
of Lemma \ref{09sc5tgdwefguv}.
\end{proof}

\section{Schauder estimates
for divergent fractional Laplacians}\label{SECT:3}

This section is devoted to the proof of Theorem~\ref{SCHAUDER}.
For this, we first 
give a uniform bound  for solutions, as stated in the following result:

\begin{lemma}\label{SCk91001}
Let $\beta\in(0,1)$, $m\in\N$, $s\in(0,1)$
and $u:\R^n\to\R$.

Assume that $u$ is continuous in~$B_1$ and
that
\begin{equation*}
J_{u,0}:=\int_{B_{1/2}^c} \frac{|u(y)|}{|y|^{n+2s}}\,dy<+\infty
.\end{equation*}
Suppose that
$$ (-\Delta)^s u = f \qquad{\mbox{ in }}\;B_1$$ in the viscosity sense.
Then
$$ \|u\|_{L^\infty(B_{9/10})}+\|f\|_{C^m(B_{99/100})}\le C\,\Big( [f]_{C^{m+\beta}(B_1)}
+J_{u,0}\Big),$$
for some $C>0$, only depending on $n$, $s$, $m$ and~$\beta$.
\end{lemma}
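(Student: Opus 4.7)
The plan is to prove the bound by a compactness-contradiction argument, using the unique continuation from Lemma~\ref{PLO:0} as the rigidity ingredient. The intuition is that $[f]_{C^{m+\beta}(B_1)}$ only controls $f$ up to polynomials of degree~$\le m$, while $J_{u,0}$ controls $u$ only outside~$B_{1/2}$; the equation $(-\Delta)^s u=f$ must therefore prevent any nontrivial ``polynomial source paired with boundary-zero solution'' from appearing, and Lemma~\ref{PLO:0} is exactly the statement that rules this out.

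First I would assume the estimate fails. Then there exist continuous functions~$u_k:\R^n\to\R$ and~$f_k:B_1\to\R$ with~$(-\Delta)^s u_k=f_k$ in~$B_1$ in the viscosity sense, such that
$$\|u_k\|_{L^\infty(B_{9/10})}+\|f_k\|_{C^m(B_{99/100})}=1,
\qquad
[f_k]_{C^{m+\beta}(B_1)}+J_{u_k,0}\longrightarrow 0.$$
Compactness for~$f_k$ is immediate: the uniform bound on $\|D^j f_k\|_{L^\infty(B_{99/100})}$ for $j\le m$ gives equi-Lipschitz continuity of $D^{m-1}f_k,\ldots,f_k$, while the bound on $[f_k]_{C^{m+\beta}(B_1)}\to 0$ makes $D^m f_k$ equicontinuous on~$B_{99/100}$. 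Arzel\`a--Ascoli yields a subsequence with~$f_k\to f_\infty$ in~$C^m(B_{99/100})$, and since $[f_\infty]_{C^{m+\beta}(B_1)}=0$, the limit~$f_\infty$ is a polynomial of degree at most~$m$. Compactness for~$u_k$ comes from interior H\"older regularity for the fractional Laplacian applied on, say, $B_{17/18}$: since~$B_{9/10}^c\subseteq B_{1/2}^c$, the relevant tail of~$u_k$ is controlled by~$J_{u_k,0}$ together with the uniform bound~$\|u_k\|_{L^\infty(B_{9/10})}\le 1$, and~$\|f_k\|_{L^\infty(B_{99/100})}\le 1$ controls the source, so~$u_k$ is equi-H\"older on compacts of~$B_{9/10}$, giving local uniform convergence to some~$u_\infty$ on~$B_{9/10}$.

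Next I would exploit~$J_{u_k,0}\to 0$: along a further subsequence~$u_k\to 0$ a.e.\ on~$B_{1/2}^c$, so, matching with the locally uniform limit on~$B_{9/10}$, we obtain~$u_\infty=0$ on~$B_{1/2}^c$. By the standard stability of viscosity solutions under locally uniform convergence in the interior combined with the tail convergence just established (which is precisely the setting of Lemma~5 of \cite{MR2781586}), the limit solves~$(-\Delta)^s u_\infty = f_\infty$ in~$B_1$ in the viscosity sense. Since~$f_\infty$ is a polynomial and~$u_\infty$ vanishes outside~$B_{1/2}$, Lemma~\ref{PLO:0} (applied with~$R=1$, $r=1/2$) forces~$u_\infty\equiv 0$, and hence~$f_\infty=(-\Delta)^s u_\infty=0$ on~$B_1$. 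Passing to the limit in the normalization gives $\|u_\infty\|_{L^\infty(B_{9/10})}+\|f_\infty\|_{C^m(B_{99/100})}=1$, which is a contradiction.

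The main obstacle, I expect, is the careful bookkeeping of the tail control when passing to the limit in the viscosity equation: we must make sure that the weighted integral~$J_{u_k,0}\to 0$, together with the uniform interior bound~$\|u_k\|_{L^\infty(B_{9/10})}\le 1$, is enough to pass to the limit in the nonlocal operator at every interior test point (i.e.\ dominated convergence against the singular kernel), so that~$(-\Delta)^s u_\infty=f_\infty$ genuinely holds in~$B_1$. Once this is in place, the application of Lemma~\ref{PLO:0} is a clean finish.
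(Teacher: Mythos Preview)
Your proposal is correct and follows essentially the same contradiction--compactness strategy as the paper: normalize, extract limits $u_\infty$ and $f_\infty$ (the latter a polynomial of degree at most $m$), pass the equation to the limit via the stability result of \cite{MR2781586}, and then invoke Lemma~\ref{PLO:0} to force $u_\infty\equiv0$, hence $f_\infty\equiv0$, contradicting the normalization. The only cosmetic discrepancy is that your uniform convergence of $u_k$ is on compacts of $B_{9/10}$, so the limit equation holds in $B_{9/10}$ rather than $B_1$; accordingly Lemma~\ref{PLO:0} should be applied with $R=9/10$, $r=1/2$, which is of course just as good.
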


\begin{proof}
By contradiction, we can suppose that there exist sequences
of functions~$u_j$ and~$f_j$ such that~$(-\Delta)^s u_j = f_j$ in $B_1$,
with
\begin{equation}\label{AssuX245X456X5X}
\Theta_j:=\|u_j\|_{L^\infty(B_{9/10})}+\|f_j\|_{C^m(B_{99/100})}>j\,\Big( [f_j]_{C^{m+\beta}(B_1)}
+J_{u_j,0}\Big).
\end{equation}
We define
$$ \tilde u_j:=\frac{u_j}{\Theta_j}
\quad{\mbox{ and }}\quad\tilde f_j:=\frac{f_j}{\Theta_j}.$$
Then,
\begin{equation}\label{AssuX245X456X5X:2}
(-\Delta)^s \tilde u_j = \tilde f_j\quad{\mbox{ in }}B_1.
\end{equation}
Also,
\begin{equation}\label{AssuX245X456X5X:3}
\|\tilde f_j\|_{C^m(B_{99/100})}=\frac{\|f_j\|_{C^m(B_{99/100})}}{\Theta_j}\le1 
\end{equation}
and
\begin{equation}\label{AssuX245X456X5X:4}
[\tilde f_j]_{C^{m+\beta}(B_1)}=
\frac{ [f_j]_{C^{m+\beta}(B_1)} }{ \Theta_j }\le\frac1j,
\end{equation}
due to~\eqref{AssuX245X456X5X}.

In particular, we have that~$\|\tilde f_j\|_{C^{m+\beta}(B_{99/100})}\le2$.
{F}rom this, up to a subsequence, we may suppose that
\begin{equation}\label{09sd888811}
{\mbox{$\tilde f_j$
converges to some~$\tilde f$ in~${C^{m+\beta}(B_{99/100})}$.}}\end{equation}
We also remark that
$$ [\tilde f]_{C^{m+\beta}(B_{99/100})}\le [\tilde f-\tilde f_j]_{C^{m+\beta}(B_{99/100})}+
[\tilde f_j]_{C^{m+\beta}(B_{99/100})}\le
\| \tilde f-\tilde f_j\|_{C^{m+\beta}(B_{99/100})}+\frac1j,$$
which goes to zero as~$j\to+\infty$. This means that~$D^m\tilde f$
is constant in~$B_{99/100}$, hence~$D^{m+1}\tilde f$ vanishes in~$B_{99/100}$
and
\begin{equation}\label{98duwyfideyefuisdgj8991:0}
{\mbox{$\tilde f$ is a polynomial of degree $m$.}}
\end{equation}
Moreover, 
\begin{equation}\label{78879:001}
J_{\tilde u_j,0}= \frac{J_{u_j,0} }{{\Theta_j}}\le\frac1j,
\end{equation}
thanks to~\eqref{AssuX245X456X5X}.

Now, from~\eqref{AssuX245X456X5X:2} and 
Lemma 5.2 in~\cite{2016arXiv160102548C} (see also the remark after it),
we have that
\begin{equation}\label{78879:001:02}
\|\tilde u_j\|_{L^\infty(B_{97/100})} \le
C\,\Big( \|\tilde f_j\|_{L^\infty(B_{99/100})}
+J_{\tilde u_j,0}
\Big),\end{equation}
for some~$C>0$.

Also, by
Proposition~7.1(a)
in~\cite{2016arXiv160102548C}, we have that, for any fixed~$\alpha\in(0,2s)$,
$$ \|\tilde u_j\|_{C^\alpha(B_{9/10})} \le C\,\Big( 
\|\tilde f_j\|_{L^\infty(B_{97/100})}
+\|\tilde u_j\|_{L^\infty(B_{97/100})}+
J_{\tilde u_j,0}
\Big),$$
for some~$C>0$.
Hence, making use of~\eqref{AssuX245X456X5X:3}, \eqref{78879:001}
and~\eqref{78879:001:02}, we conclude that~$ \|\tilde u_j\|_{C^\alpha(B_{9/10})}$
is bounded uniformly in~$j$ and so, up to a subsequence, we may
assume that~$\tilde u_j$ converges to some~$\tilde u$
in~$L^\infty(B_{9/10})$. 

As a matter of fact, from~\eqref{78879:001}, we also know that~$\tilde u_j$
converges to zero a.e. outside~$B_{1/2}$, hence we can extend~$\tilde u$
to be zero outside~$B_{9/10}$ and write that
\begin{equation}\label{8989hdsdhfsjjjsjs12345}
\lim_{j\to+\infty} \| \tilde u_j-\tilde u\|_{L^\infty(B_{9/10})}
+\int_{\R^n} \frac{|\tilde u_j(y)-\tilde u(y)|}{1+|y|^{n+2s}}\,dy=0,
\end{equation}
with
\begin{equation}\label{98duwyfideyefuisdgj8991}
{\mbox{$\tilde u=0$ outside $B_{1/2}$}}.\end{equation}
Hence, exploiting~\eqref{09sd888811}, \eqref{8989hdsdhfsjjjsjs12345}
and
Lemma~5 in~\cite{MR2781586}, we can pass~\eqref{AssuX245X456X5X:2}
to the limit and find that
\begin{equation}\label{0-du9fioehjb}
(-\Delta)^s \tilde u = \tilde f\quad{\mbox{ in }}B_{9/10}.\end{equation}
{F}rom this, \eqref{98duwyfideyefuisdgj8991:0},
\eqref{98duwyfideyefuisdgj8991} and Lemma~\ref{PLO:0},
we obtain that~$\tilde u$ vanishes identically.

This and~\eqref{0-du9fioehjb} give that~$\tilde f=0$ in~$B_{9/10}$
(and in fact, from~\eqref{98duwyfideyefuisdgj8991:0},
we have that~$\tilde f=0$ in $B_{99/100}$). Consequently,
recalling~\eqref{09sd888811} and~\eqref{8989hdsdhfsjjjsjs12345},
\begin{eqnarray*}&& 1=\lim_{j\to+\infty}
\frac{\|u_j\|_{L^\infty(B_{9/10})}+\|f_j\|_{C^m(B_{99/100})} }{\Theta_j}
=\lim_{j\to+\infty}\|\tilde u_j\|_{L^\infty(B_{9/10})}+\|\tilde f_j\|_{C^m(B_{99/100})}
\\ &&\qquad
=\lim_{j\to+\infty}\|\tilde u_j-\tilde u\|_{L^\infty(B_{9/10})}+\|\tilde f_j-\tilde f\|_{C^m(B_{99/100})}
=0,
\end{eqnarray*}
which is, of course, a contradiction.
\end{proof}

To address the Schauder estimates of Theorem~\ref{SCHAUDER},
we now provide a simpler, suboptimal version (this result can be obtained
by a suitable iteration argument from the existing literature, but
we give the precise statement and the details of the proof for the reader's convenience):

\begin{lemma}\label{SUBP}
Let $s\in(0,1)$, $u$ be continuous in $B_1$,
with $u\in L^\infty(\R^n)$, $f:B_1\to \R$ and
suppose that
$$ (-\Delta)^s u = f \qquad{\mbox{ in }}\;B_1$$
in the viscosity sense.
Then, for any $\gamma>0$ for which~$\gamma\not\in\N$ and
$\gamma+2s\not\in\N$,
\begin{equation*}
\|u\|_{C^{\gamma+2s}(B_{1/2})}
\le C\, \Big( \|f\|_{C^\gamma(B_1)}+
\|u\|_{L^\infty(\R^n)}
\Big),
\end{equation*}
for some $C>0$, only depending on $n$, $s$ and~$\gamma$.
\end{lemma}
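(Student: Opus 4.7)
The natural strategy is induction on $m := \lfloor \gamma \rfloor$, using the known H\"older-to-H\"older Schauder estimate for the fractional Laplacian as the base case and iterating via difference quotients combined with a cutoff to maintain global $L^\infty$ control. For the base case $\gamma \in (0,1)$, the conclusion is precisely one of the estimates cited immediately after Theorem~\ref{SCHAUDER}---for instance Proposition~7.1 of \cite{2016arXiv160102548C}, Theorem~1.1 of \cite{MR3482695}, or Theorem~6 of \cite{MR3331523}---noting that the tail quantity appearing in those references is controlled by $\|u\|_{L^\infty(\R^n)}$ in our setting.

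For the inductive step at $\gamma = m+\theta$ with $m \geq 1$, I would fix concentric radii $1/2 < \rho_2 < \rho_1 < 1$ and a smooth cutoff $\eta$ equal to $1$ on $B_{\rho_2}$ with support in $B_{\rho_1}$. The localized function $\tilde u := \eta u$ lies in $L^\infty(\R^n)$ with compact support and satisfies
$$(-\Delta)^s \tilde u \;=\; f - (-\Delta)^s\bigl((1-\eta) u\bigr) \;=:\; \tilde f \qquad \text{in } B_{\rho_2},$$
where the correction $(-\Delta)^s((1-\eta)u)$ is smooth inside $B_{\rho_2}$ (since the kernel $|x-y|^{-n-2s}$ is smooth off the diagonal), with seminorms controlled by $\|u\|_{L^\infty(\R^n)}$; hence $\tilde f$ inherits from $f$ the class $C^\gamma(B_{\rho_2})$ with an acceptable quantitative bound. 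Applying the inductive hypothesis to $\tilde u$ at an intermediate exponent $\gamma' \in (\gamma-1, \gamma)$, chosen so that $\gamma', \gamma'+2s \notin \N$ and $\gamma' + 2s > 1$, first upgrades $\tilde u$ to $C^1(\R^n)$. Then, for each unit vector $e$ and small $h \neq 0$, the first-order difference quotient $\tilde u^h_e(x) := (\tilde u(x+he) - \tilde u(x))/|h|$ is uniformly bounded in $L^\infty(\R^n)$ and solves $(-\Delta)^s \tilde u^h_e = \tilde f^h_e$ in a slightly smaller ball, with $[\tilde f^h_e]_{C^{\gamma-1}} \leq C[\tilde f]_{C^\gamma}$. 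Applying the inductive hypothesis at exponent $\gamma-1$ to $\tilde u^h_e$ yields uniform-in-$h$ bounds in $C^{(\gamma-1)+2s}$; passing to the limit $h \to 0$ in each coordinate direction via a standard Campanato-type equivalence upgrades this to $\tilde u \in C^{\gamma+2s}(B_{1/2})$, whence $u \in C^{\gamma+2s}(B_{1/2})$ with the claimed estimate.

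The central technical difficulty is maintaining a \emph{global} $L^\infty$ control on the iteratively differentiated object: the role of the cutoff $\eta$ is precisely to compactify the support so that each difference quotient is bounded on all of $\R^n$, with the resulting smooth correction absorbed harmlessly into the right-hand side. A secondary book-keeping issue is the avoidance of the integer values $\gamma', \gamma' + 2s \in \N$ at intermediate inductive levels, which I would circumvent by perturbing $\gamma'$ by an arbitrarily small amount and recovering the sharp estimate at the target exponent via standard interpolation.
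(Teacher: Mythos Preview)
Your approach is correct and follows the same outline as the paper's proof: induction on $m=\lfloor\gamma\rfloor$, the base case $\gamma\in(0,1)$ drawn from the cited references, and an inductive step combining a cutoff with finite differences. The paper organizes the inductive step differently. Rather than localizing $u$ first and then taking a single first-order difference quotient (reducing to the inductive hypothesis at level $\gamma-1$), the paper takes an $(m+1)$-th order \emph{undivided} finite difference $v=D_h^{(\omega_1,\dots,\omega_{m+1})}u$, localizes $v$ via $w=\phi v$, and applies the base case $m=0$ directly to $w$; the crucial $h^{m+1}$ scaling of the tail correction comes from the explicit integration-by-parts identity of Lemma~\ref{ojskdbv63748596}. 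Because $v$ is undivided, $w$ is automatically bounded by $\|u\|_{L^\infty(\R^n)}$ without any preliminary $C^1$ upgrade, although the paper still needs the inductive hypothesis to control $\|u\|_{C^{m+1}}$ after dividing through by $h^{m+1}$, and handles the case $\theta+2s\le 1$ by a separate sub-iteration. Your recursive one-derivative-at-a-time reduction avoids the high-order difference calculus and that case split, at the cost of the extra preliminary upgrade (which, strictly speaking, requires a covering argument to conclude $\tilde u\in C^1(\R^n)$, since the interior estimate alone only yields regularity on a smaller ball than $\operatorname{supp}\eta$).
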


\begin{proof} We write $\gamma=m+\theta$, with $m:=\lfloor\gamma\rfloor$
and $\theta\in(0,1)$. The proof is by induction over $m$.
When $m=0$, the claim follows from Proposition 7.2(b) in \cite{2016arXiv160102548C}
(or Corollary 3.5 in~\cite{MR3482695}).

Now suppose that the claim is true for some $m\in\N$ and we prove it for $m+1$.
That is, we assume, recursively, that
\begin{equation} \label{IT:1}
\|u\|_{C^{m+\theta+2s}(B_{1/2})}
\le C\, \Big( \|f\|_{C^{m+\theta}(B_1)}+
\|u\|_{L^\infty(\R^n)} \Big),\end{equation}
and we prove the same statement for $m+1$ in the place of $m$ (up to renaming $C$
and possibly resizing balls).

For the sake of simplicity, let us first deal with the case
\begin{equation}\label{IT:2}
\theta+2s>1.
\end{equation}
For this, we take an incremental quotient of order $m+1$,
that is we fix $\omega_1,\dots,\omega_{m+1}\in S^{n-1}$ and we 
let $v:= D_h^{(\omega_1,\dots,\omega_{m+1})} u$ and 
$g:=D_h^{(\omega_1,\dots,\omega_{m+1})}f$ (recall the notation
of finite differences in Appendix~A.
Then, for small $h$, we have that
$$ (-\Delta)^s v = g\qquad{\mbox{ in }}\;B_{9/10}$$
in the viscosity sense. Then, we take $\phi\in C^\infty_0(B_{1/4})$
with $\phi=1$ in $B_{1/8}$ and we define $w:=\phi v$. In this way, we obtain that, for any $x\in B_{1/16}$,
\begin{equation}\label{89aosucdkhoyiwhefkj23456u}
\begin{split}
& (-\Delta)^s w(x) =\int_{\R^n}\frac{v(x)-(\phi v)(y)}{|x-y|^{n+2s}}\,dy
= g(x)+\int_{\R^n}\frac{(1-\phi (y)) v(y)}{|x-y|^{n+2s}}\,dy
\\ &\qquad=
g(x)+\int_{\R^n}\frac{(1-\phi (y)) D_h^{(\omega_1,\dots,\omega_{m+1})}
u(y)}{|x-y|^{n+2s}}\,dy.
\end{split}\end{equation}
Notice that, if, for any $x\in B_{1/16}$, we set
$$ \Psi^{(x)}(y):= \frac{1-\phi (y)}{|x-y|^{n+2s}},$$
we have that $\Psi^{(x)}$ vanishes in $B_{1/8}$
and so $\Psi^{(x)}\in L^1( \R^n)$.
Therefore, by Lemma \ref{ojskdbv63748596},
\begin{eqnarray*}
&&\int_{\R^n}\frac{(1-\phi (y)) D_h^{(\omega_1,\dots,\omega_{m+1})}
u(y)}{|x-y|^{n+2s}}\,dy=
\int_{\R^n}\Psi^{(x)}(y)\,D_h^{(\omega_1,\dots,\omega_{m+1})} u(y)\,dy\\ &&\qquad\qquad=
\int_{\R^n} D_h^{(-\omega_1,\dots,-\omega_{m+1})}
\Psi^{(x)}(y)\, u(y)\,dy=: G(x),\end{eqnarray*}
with $$\| G\|_{C^1(B_{1/16})}\le C\,h^{m+1}\,\|u\|_{L^\infty(\R^n)}.$$
Hence, \eqref{89aosucdkhoyiwhefkj23456u}
gives that $(-\Delta)^s w = H$ in $B_{1/16}$, with $H:=g+G$ and, by
Lemma~\ref{DERIVE},
we have
$$ \| H\|_{C^\theta(B_{1/16})} \le \| g\|_{C^\theta(B_{1/16})}
+\| G\|_{C^\theta(B_{1/16})}\le
C\,h^{m+1}\,\Big(\|f\|_{C^{m+1+\theta}(B_{1/8})}+
\|u\|_{L^\infty(\R^n)}\Big).$$
That is, using the claim with $m=0$ and once more
Lemma~\ref{DERIVE},
\begin{eqnarray*}
\|D_h^{(\omega_1,\dots,\omega_{m+1})} u\|_{C^{\theta+2s}(B_{1/32})}&=&
\|w\|_{C^{\theta+2s}(B_{1/32})}\\
&\le& C\, \Big( \|H\|_{C^{\theta}(B_{1/16})}+
\|w\|_{L^\infty(\R^n)} \Big)\\
&\le& Ch^{m+1}\, \Big( 
\|f\|_{C^{m+1+\theta}(B_{1/8})}+
\|u\|_{L^\infty(\R^n)}+\| u\|_{C^{m+1}(B_{1/4})}
\Big).\end{eqnarray*}
Dividing by $h^{m+1}$, sending $h\to0$ and recalling again
Lemma \ref{DERIVE},
we thus find that
$$ \|D^{m+1} u
\|_{C^{\theta+2s}(B_{1/32})}\le
C\,\Big(\|f\|_{C^{m+1+\theta}(B_{1/8})}+
\|u\|_{L^\infty(\R^n)}+\| u\|_{C^{m+1}(B_{1/4})}\Big).$$
This, together with \eqref{IT:1} and~\eqref{IT:2}, gives that
$$ \|u\|_{C^{m+1+\theta+2s}(B_{1/32})}
\le C\, \Big( \|f\|_{C^{m+1+\theta}(B_1)}+
\|u\|_{L^\infty(\R^n)} \Big),$$
up to renaming $C>0$, which is the iterative version of \eqref{IT:1}
(up to renaming constants and resizing balls), as desired.

If, on the other hand, the condition in \eqref{IT:2} does not hold,
i.e. $\theta+2s\in(0,1)$, then the previous proof must be done step by step,
namely, one takes $\bar N\in\N$ so large that $\bar\alpha:= 1/\bar N< \theta+2s$.
Then one considers the functions
\begin{eqnarray*}&& v(x):= D_h^{(\omega_1,\dots,\omega_{m})} u(x+\omega_{m+1})-
D_h^{(\omega_1,\dots,\omega_{m})} u(x)\\
{\mbox{and }}
&& g(x):= D_h^{(\omega_1,\dots,\omega_{m})} f(x+\omega_{m+1})-
D_h^{(\omega_1,\dots,\omega_{m})} f(x).\end{eqnarray*}
Then, the argument above would give a bound like
$$ \|u\|_{C^{m+\bar\alpha+\theta+2s}(B_{1/32})}
\le C\, \Big( \|f\|_{C^{m+\bar\alpha+\theta}(B_1)}+
\|u\|_{L^\infty(\R^n)} \Big).$$
Hence, one repeats this argument over and over to get
$$ \|u\|_{C^{m+j\bar\alpha+\theta+2s}(B_{r_j})}
\le C\, \Big( \|f\|_{C^{m+j\bar\alpha+\theta}(B_1)}+
\|u\|_{L^\infty(\R^n)} \Big)$$
for every $j\in\{1,\dots,\bar N\}$, which gives the desired result in the end.
\end{proof}

Now, we deal with the Schauder estimates in the case of the non-divergent fractional
Laplacian, corresponding to~$k:=0$ in Theorem~\ref{SCHAUDER}.
This case is dealt with explicitly in the following result:

\begin{proposition}\label{SCHAUDER=k0}[Sharp Schauder estimates
for the classical fractional Laplacian]
Let $s\in(0,1)$,
$u:\R^n\to\R$ and~$f:B_1\to\R$.

Assume that $u$ is continuous in~$B_1$ and
that
\begin{equation*}
J_{u,0}:=\int_{B_{1/2}^c} \frac{|u(y)|}{|y|^{n+2s}}\,dy 
<+\infty.\end{equation*}
Suppose that
$$ (-\Delta)^s u = f \qquad{\mbox{ in }}\;B_1$$
in the viscosity sense.
Then, for any $\gamma>0$ for which~$\gamma\not\in\N$ and
$\gamma+2s\not\in\N$,
\begin{equation}\label{DER12}
\|u\|_{C^{\gamma+2s}(B_{1/2})}
\le C\, \Big( [f]_{C^\gamma(B_1)}+J_{u,0}\Big),
\end{equation}
for some $C>0$, only depending on $n$, $s$ and~$\gamma$.
\end{proposition}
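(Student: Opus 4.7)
The plan is to combine the suboptimal Schauder estimate of Lemma~\ref{SUBP} with the sharp $L^\infty$ and $C^m$ bounds of Lemma~\ref{SCk91001} via a cutoff argument. Lemma~\ref{SUBP} applies only when $u\in L^\infty(\R^n)$ and produces the full norm $\|f\|_{C^\gamma}$ on the right-hand side, while Lemma~\ref{SCk91001} is precisely the tool needed to upgrade both the pointwise bound on $u$ and the norm of $f$ to the seminorm $[f]_{C^\gamma(B_1)}$ plus the tail $J_{u,0}$; composing these two ingredients through a smooth cutoff will deliver the claim.

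First I would localize: fix $\chi\in C^\infty_0(B_{7/8})$ with $\chi\equiv 1$ on $B_{4/5}$, set $w:=\chi u$, which lies in $L^\infty(\R^n)$, and verify that for $x\in B_{3/4}$ one has, in the viscosity sense (by a computation analogous to that in the proof of Corollary~\ref{8IJAJS2wedfv1234}),
$$(-\Delta)^s w(x)=f(x)+G(x),\qquad G(x):=\int_{\{|y|\geq 4/5\}}\frac{(1-\chi(y))\,u(y)}{|x-y|^{n+2s}}\,dy.$$
Since the support of the integrand stays at distance $\geq 1/20$ from any $x\in B_{3/4}$, and $|x-y|\geq|y|/4$ for $|y|\geq 1$, splitting the integral at $|y|=1$ and using that $J_{u,0}$ controls the weighted $L^1$ of $u$ on $\{|y|\geq 1/2\}$ would yield $\|G\|_{C^\gamma(B_{3/4})}\leq C\,J_{u,0}$; in fact $G$ is smooth on $B_{3/4}$ with all derivatives bounded by $C\,J_{u,0}$.

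Next, I would apply Lemma~\ref{SUBP} to $w$ (after a harmless rescaling taking $B_1$ to $B_{3/4}$), exploiting that $w$ is globally bounded and solves $(-\Delta)^s w=f+G$ on $B_{3/4}$, to obtain
$$\|w\|_{C^{\gamma+2s}(B_{3/8})}\leq C\Big(\|f\|_{C^\gamma(B_{3/4})}+\|G\|_{C^\gamma(B_{3/4})}+\|w\|_{L^\infty(\R^n)}\Big).$$
Since $w=u$ on $B_{4/5}\supset B_{3/8}$ and $\|w\|_{L^\infty(\R^n)}\leq\|u\|_{L^\infty(B_{9/10})}$, I would then invoke Lemma~\ref{SCk91001} with $m:=\lfloor\gamma\rfloor$ and $\beta:=\gamma-m\in(0,1)$ (which is well-defined thanks to $\gamma\notin\N$) to bound both $\|u\|_{L^\infty(B_{9/10})}$ and $\|f\|_{C^m(B_{99/100})}$ by $C([f]_{C^\gamma(B_1)}+J_{u,0})$; combined with the trivial $[f]_{C^\gamma(B_{3/4})}\leq[f]_{C^\gamma(B_1)}$, this would deliver the sharp estimate on $B_{3/8}$.

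To promote $B_{3/8}$ to $B_{1/2}$, I would use a standard covering argument: cover $B_{1/2}$ by finitely many balls $B_\rho(x_0)$ with $B_{2\rho}(x_0)\subset B_1$ and $\rho$ fixed of order one, apply the rescaled version of the preceding estimate at each center, and take the maximum. The main technical subtlety I expect is keeping track of the distinct scaling behavior of $[f]_{C^\gamma}$, $\|u\|_{C^{\gamma+2s}}$, and the tail $J_{u,0}$ under the rescaling; because the dilation factor is fixed and of order one, however, all these scalings collapse into universal constants, and the proof reduces essentially to the direct combination of Lemmas~\ref{SCk91001} and~\ref{SUBP} via the cutoff described above.
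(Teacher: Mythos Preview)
Your proposal is correct and follows essentially the same approach as the paper: cut off $u$ to make it globally bounded, apply Lemma~\ref{SUBP} to the cutoff function, and then invoke Lemma~\ref{SCk91001} to upgrade both $\|u\|_{L^\infty}$ and $\|f\|_{C^m}$ to $[f]_{C^\gamma}+J_{u,0}$. The only cosmetic difference is that the paper first rescales so that the equation holds in $B_4$ (thereby landing directly on $B_1$ without a covering step), whereas you work in the original $B_1$ and cover $B_{1/2}$ at the end; both devices are equivalent since the dilation factors involved are of order one.
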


\begin{proof} Since we are dealing with interior estimates,
up to resizing balls, we will assume that
\begin{equation}\label{w897iugfuiwqegf}
(-\Delta)^s u = f \qquad{\mbox{ in }}\;B_4.\end{equation}
We take~$\tau\in C^\infty_0(B_3,\,[0,1])$ with~$\tau=1$ in~$B_2$
and we set~$v:=\tau u$. We also define
\begin{eqnarray*}&&
\tilde f(x) :=\int_{\R^n}\frac{(1-\tau(y))\,u(y)}{|x-y|^{n+2s}}\,dy=
\int_{B_2^c}\frac{(1-\tau(y))\,u(y)}{|x-y|^{n+2s}}\,dy\\
{\mbox{and }}&&
g(x):=f(x)+\tilde f(x).
\end{eqnarray*}
Hence, from \eqref{w897iugfuiwqegf}, we see that, for any~$x\in B_{3/2}$,
\begin{equation*}
(-\Delta)^s v(x)=
\int_{\R^n} \frac{u(x)-\tau(y)\,u(y)}{|x-y|^{n+2s}}
= g(x).\end{equation*}
Then, from Lemma \ref{SUBP},
we know that
\begin{equation} \label{XDAG:1}
\|v\|_{C^{\gamma+2s}(B_1)} \le C\,\Big(
\|v\|_{L^\infty(\R^n)}+\|g\|_{C^\gamma(B_{3/2})}
\Big),\end{equation}
for some~$C>0$.

Now we observe that, from~\eqref{w897iugfuiwqegf}
and Lemma~\ref{SCk91001},
\begin{equation}  \label{XDAG:2}
\|v\|_{L^\infty(\R^n)}=
\|v\|_{L^\infty(B_3)}\le
\|u\|_{L^\infty(B_3)}
\le C\,\left(
[f]_{C^\gamma(B_4)}+
\int_{B_2^c}\frac{|u(y)|}{|y|^{n+2s}}\,dy
\right),\end{equation}
up to renaming~$C>0$.

Also, for any~$m\in\N$ and any~$x$, $\bar x\in B_{3/2}$,
\begin{eqnarray*}
&&
|D^m\tilde f(x) |\le C
\int_{B_2^c}\frac{|u(y)|}{|x-y|^{n+2s+m}}\,dy\\
{\mbox{and }}&&
|D^m\tilde f(x) -D^m \tilde f(\bar x)|\le C\,|x-\bar x|\,
\int_{B_2^c}\frac{|u(y)|}{|x-y|^{n+2s+m+1}}\,dy
,
\end{eqnarray*}
with~$C>0$ depending on~$m$. As a consequence,
$$ \|\tilde f\|_{C^\gamma(B_{3/2})}\le
C\,\int_{B_2^c}\frac{|u(y)|}{|y|^{n+2s}}\,dy$$
and therefore
\begin{equation}  \label{XDAG:3}
\|g\|_{C^\gamma(B_{3/2})}\le
\|f\|_{C^\gamma(B_{3/2})}+
\|\tilde f\|_{C^\gamma(B_{3/2})}
\le \|f\|_{C^\gamma(B_{3/2})}+
C\,\int_{B_2^c}\frac{|u(y)|}{|y|^{n+2s}}\,dy.
\end{equation}
We also observe that~$u=v$ in~$B_1$ and thus
\begin{equation} \label{XDAG:4}
\|v\|_{C^{\gamma+2s}(B_1)}=\|u\|_{C^{\gamma+2s}(B_1)}.\end{equation}
So, we insert~\eqref{XDAG:2}, \eqref{XDAG:3} and~\eqref{XDAG:4}
into~\eqref{XDAG:1} and we conclude that
\begin{equation}\label{78udhfhdsjfdhhfhh8222}
\|u\|_{C^{\gamma+2s}(B_1)} \le 
C\,\left( \|f\|_{C^\gamma(B_{3/2})}+
[f]_{C^\gamma(B_4)}+
\int_{B_2^c}\frac{|u(y)|}{|y|^{n+2s}}\,dy
\right),\end{equation}
for some~$C>0$.

Also, from~\eqref{w897iugfuiwqegf} and Lemma~\ref{SCk91001},
if we write~$\gamma=m+\beta$, with~$m:=\lfloor\gamma\rfloor$
and~$\beta\in(0,1)$, we have that
$$ \|f\|_{C^m(B_{3/2})}\le C\,\left( [f]_{C^{m+\beta}(B_4)}
+\int_{B_2^c}\frac{|u(y)|}{|y|^{n+2s}}\,dy\right).$$
Therefore, summing~$[f]_{C^{m+\beta}(B_{3/2})}=
[f]_{C^{\gamma}(B_{3/2})}$ to both sides of this inequality,
we find that
$$ \|f\|_{C^\gamma(B_{3/2})}\le C\,\left( [f]_{C^{\gamma}(B_4)}
+\int_{B_2^c}\frac{|u(y)|}{|y|^{n+2s}}\,dy\right).$$
So, we plug this information into~\eqref{78udhfhdsjfdhhfhh8222}
and we conclude that
$$ \|u\|_{C^{\gamma+2s}(B_1)} \le
C\,\left( 
[f]_{C^\gamma(B_4)}+
\int_{B_2^c}\frac{|u(y)|}{|y|^{n+2s}}\,dy
\right),$$
up to renaming~$C>0$, and this is~\eqref{DER12},
after resizing balls.
\end{proof}

{F}rom Proposition~\ref{SCHAUDER=k0}, we obtain
a Schauder estimate for the cutoff equation, as detailed in the following result:

\begin{proposition}\label{PRE:S}
Let $s\in(0,1)$, $k\in\N$ and $u:\R^n\to\R$.

Assume that $u\in C^\alpha_{\rm loc}(B_1)$ for some $\alpha>2s$ and
that
\begin{equation*}
J_{u,k}:=\int_{B_{1/2}^c} \frac{|u(y)|}{|y|^{n+2s+k}}\,dy < +\infty
.\end{equation*}
Let $f_u$ be as in Corollary \ref{POLLI2}.
Let also
\begin{equation}\label{0u9sd13243}
\gamma> k-1\end{equation} such that~$\gamma\not\in\N$ and
$\gamma+2s\not\in\N$. Then, it holds that
\begin{equation}\label{4.15bis}
\|u\|_{C^{\gamma+2s}(B_{1/2})}
\le C\, \Big( [f_u]_{C^\gamma(B_1)}+J_{u,k}\Big),
\end{equation}
for some $C>0$, only depending on $n$, $s$, $\gamma$
and $k$.
\end{proposition}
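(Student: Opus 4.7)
The plan is to reduce to the non-divergent ($k=0$) Schauder estimate of Proposition~\ref{SCHAUDER=k0} by cutting~$u$ off outside~$B_2$. Concretely, I set~$w:=\chi_2\,u$ and invoke Corollary~\ref{2345923847hffhhfhfhfsksk} with~$\rho:=2$. Since~$B_2\cap B_2^c=\varnothing$, the corollary produces an equation of the form
$$(-\Delta)^s w \;=\; \bar f - h \qquad{\mbox{in }}B_1$$
in the viscosity sense, where~$\bar f=f_u+P$ for some polynomial~$P$ of degree at most~$k-1$ and
$$h(x)\;:=\; \int_{B_2^c} \frac{u(y)\,\psi(x,y)}{|y|^{n+2s+k}}\,dy.$$

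Next, $w$ coincides with~$u$ on~$B_1$ (so it is continuous there) and is supported in~$\overline{B_2}$, so~$J_{w,0}$ is plainly finite, and Proposition~\ref{SCHAUDER=k0} applies to~$w$ to give
$$\|u\|_{C^{\gamma+2s}(B_{1/2})}\;=\;\|w\|_{C^{\gamma+2s}(B_{1/2})}\;\le\; C\Big([\bar f-h]_{C^\gamma(B_1)}+J_{w,0}\Big).$$

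It remains to control the two terms on the right by~$[f_u]_{C^\gamma(B_1)}+J_{u,k}$, and I expect three elementary estimates. First, since~$\gamma>k-1$ one has~$\lfloor\gamma\rfloor\ge k-1$, so~$D^{\lfloor\gamma\rfloor}P$ is a constant on~$B_1$ and hence~$[P]_{C^\gamma(B_1)}=0$; this gives~$[\bar f]_{C^\gamma(B_1)}=[f_u]_{C^\gamma(B_1)}$. Second, differentiating~$h$ under the integral sign and using~\eqref{PSI BUONA} yields~$\|D^m h\|_{L^\infty(B_1)}\le C_m J_{u,k}$ for every~$m\in\N$, which in particular gives~$[h]_{C^\gamma(B_1)}\le C J_{u,k}$. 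Third, on the shell~$B_2\setminus B_{1/2}$ one has~$|y|\le 2$, hence~$|y|^{-n-2s}\le 2^k|y|^{-n-2s-k}$, and therefore
$$J_{w,0}\;=\;\int_{B_2\setminus B_{1/2}}\frac{|u(y)|}{|y|^{n+2s}}\,dy\;\le\; 2^k J_{u,k}.$$
Combining these three observations with the displayed inequality above yields~\eqref{4.15bis}.

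I do not foresee a real obstacle: the whole argument is morally a cutoff trick plus the $k=0$ case. The only conceptually delicate point is the interaction between the polynomial ambiguity inherent in the divergent-Laplacian setting (the invariance \eqref{0dfhvnoooP}) and the H\"older seminorm on the right of~\eqref{4.15bis}. This is precisely why the hypothesis~$\gamma>k-1$ in~\eqref{0u9sd13243} is imposed: it ensures, via~\eqref{TOGLI}, that the polynomial~$P$ produced by Corollary~\ref{2345923847hffhhfhfhfsksk} is invisible to the seminorm, so that the bound can legitimately be stated in terms of~$f_u$ alone rather than its equivalence class modulo polynomials.
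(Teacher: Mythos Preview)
Your proof is correct and follows essentially the same strategy as the paper: cut off~$u$, obtain a classical (viscosity) equation with right-hand side equal to~$f_u$ plus a polynomial of degree at most~$k-1$ plus a tail term controlled by~$J_{u,k}$, then apply the~$k=0$ Schauder estimate (Proposition~\ref{SCHAUDER=k0}) and use~$\gamma>k-1$ to kill the polynomial in the seminorm. The only cosmetic difference is that you invoke Corollary~\ref{2345923847hffhhfhfhfsksk} with~$\rho=2$ as a packaged shortcut, whereas the paper works directly from Theorem~\ref{POLLI} with~$\tau=\chi_4$ and Remark~\ref{RE1}; since Corollary~\ref{2345923847hffhhfhfhfsksk} is proved independently of Proposition~\ref{PRE:S}, there is no circularity, and the two arguments are interchangeable.
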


\begin{proof} We write~$\gamma=m+\theta$, with~$m\in\N$
and~$\theta\in(0,1)$. {F}rom~\eqref{0u9sd13243}, we infer that
\begin{equation}\label{891017s4}
m\ge k-1.
\end{equation}
We take a family of cutoffs $\chi_R$ as in \eqref{9d01} and we exploit
Theorem \ref{POLLI} with $\tau:=\chi_4$. Then, if we set $v:=\chi_4 u$,
we obtain that, for any $x\in B_1$,
\begin{equation}\label{9w8dgwiyfushjvsssg}
(-\Delta)^s v(x)= P_{u,\chi_4}(x)+f_{u,\chi_4}(x),\end{equation}
and $P_{u,\chi_4}$ is a polynomial of degree at most $k-1$.

In particular, from~\eqref{891017s4}, we see that~$[D^mP_{u,\chi_4}]_{C^\theta(B_1)}$
vanishes. Thus, from~\eqref{9w8dgwiyfushjvsssg}
and \eqref{DER12}, we find that
\begin{equation}\label{09auisydghvyqduwfghjdvBISBIS}
\begin{split}
\|u\|_{C^{\gamma+2s}(B_{1/2})}\,&=
\|v\|_{C^{\gamma+2s}(B_{1/2})}\\&
\le C\, \Big( [P_{u,\chi_4}+f_{u,\chi_4}]_{C^\gamma(B_1)}+J_{v,0}\Big)\\
&=C\, \Big( [f_{u,\chi_4}]_{C^\gamma(B_1)}+J_{v,0}\Big).\end{split}\end{equation}
Now we set
$$ \tilde f(x):=
\int_{B_2^c} \frac{(\chi_4(y)-1)\; u(y)\;\psi(x,y)}{|y|^{n+2s+k}}\,dy,$$
where $\psi$ is as in Theorem \ref{POLLI}.
Notice that
$$ D^m\tilde f(x)=
\int_{B_2^c} \frac{(\chi_4(y)-1)\; u(y)\;D^m\psi(x,y)}{|y|^{n+2s+k}}\,dy,$$
and therefore
\begin{equation}\label{09auisydghvyqduwfghjdv}
[\tilde f]_{C^\gamma(B_1)}\le C\,
\int_{B_2^c} \frac{| u(y)|}{|y|^{n+2s+k}}\,dy\le C\,J_{u,k}
\end{equation}
for some $C>0$
(notice that the dependence of $C$ on $\psi$ here is inessential,
due to Remark \ref{RE0}).

Also, from Remark \ref{RE1}, \eqref{f3dif} and \eqref{F3U}, we know that
$$ f_{u,\chi_4}= f_u-f_{3,u}+f^\star_{u,\chi_4}= f_u+\tilde f.$$
This and~\eqref{09auisydghvyqduwfghjdv} imply that
\begin{equation}\label{09auisydghvyqduwfghjdvBIS}
[f_{u,\chi_4}]_{C^\gamma(B_1)}\le [f_u]_{C^\gamma(B_1)}+C\,J_{u,k}.
\end{equation}
Furthermore,
\begin{eqnarray*}
&& J_{v,0}=
\int_{B_{1/2}^c} \frac{| (\chi_4 u)(y)|}{|y|^{n+2s}}\,dy 
\le \int_{B_4\setminus B_{1/2}} \frac{|(\chi_4 u)(y)|}{|y|^{n+2s}}\,dy\\
&&\qquad\qquad\le C\, \int_{B_4\setminus B_{1/2}} 
\frac{|(\chi_4 u)(y)|}{|y|^{n+2s+k}}\,dy\le C\,J_{u,k}.
\end{eqnarray*}
So, we insert this and~\eqref{09auisydghvyqduwfghjdvBIS} into~\eqref{09auisydghvyqduwfghjdvBISBIS}
and we obtain the desired result.
\end{proof}

By combining Definition \ref{DE:k}
and Proposition \ref{PRE:S}, we obtain:

\begin{corollary}\label{gamma barra}
Let $s\in(0,1)$, $k\in\N$, $u:\R^n\to\R$ and~$f:B_1\to\R$.

Assume that $u$ is continuous in~$B_1$ and
\begin{equation*}
J_{u,k}:=\int_{B_{1/2}^c} \frac{|u(y)|}{|y|^{n+2s+k}}\,dy < +\infty
.\end{equation*}

Suppose that
\begin{equation}\label{o90:x0}
(-\Delta)^s u \ugu f \qquad{\mbox{ in }}\;B_1.\end{equation}
Then, for any 
\begin{equation}\label{c ga91}
\gamma>k-1\end{equation}
such that~$\gamma\not\in\N$ and
$\gamma+2s\not\in\N$, it holds that
\begin{equation}\label{GH:OAA BIS}
\|u\|_{C^{\gamma+2s}(B_{1/2})}
\le C\, \Big( [f]_{C^\gamma(B_1)}+J_{u,k}\Big),
\end{equation}
for some $C>0$, only depending on $n$, $s$, $\gamma$
and $k$.
\end{corollary}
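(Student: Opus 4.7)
The idea is to reduce the divergent equation $(-\Delta)^s u\ugu f$ to an honest fractional Laplace equation via Corollary \ref{2345923847hffhhfhfhfsksk}, and then to apply the sharp Schauder estimate for the (non-divergent) fractional Laplacian furnished by Proposition \ref{SCHAUDER=k0}.

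Concretely, I would fix $\rho=2$ and set $w:=\chi_2 u$. Since $B_2\cap B_2^c=\varnothing$, Corollary \ref{2345923847hffhhfhfhfsksk} yields a polynomial $P$ of degree at most $k-1$ such that, setting $\bar f:=f+P$,
\begin{equation*}
(-\Delta)^s w(x) = \bar f(x) - \int_{B_2^c} \frac{u(y)\,\psi(x,y)}{|y|^{n+2s+k}}\,dy =: g(x)
\end{equation*}
holds in $B_1$ in the viscosity sense. The function $w$ is continuous in $B_1$, supported in $B_2$, and
\begin{equation*}
J_{w,0}=\int_{B_{1/2}^c} \frac{|w(y)|}{|y|^{n+2s}}\,dy
\le \int_{B_{1/2}^c\cap B_2} \frac{|u(y)|\,|y|^k}{|y|^{n+2s+k}}\,dy
\le 2^k\,J_{u,k}<+\infty.
\end{equation*}
Proposition \ref{SCHAUDER=k0} therefore applies to $w$ and gives
\begin{equation*}
\|w\|_{C^{\gamma+2s}(B_{1/2})} \le C\bigl([g]_{C^\gamma(B_1)}+J_{w,0}\bigr),
\end{equation*}
and since $\chi_2\equiv 1$ on $B_1$, the left-hand side coincides with $\|u\|_{C^{\gamma+2s}(B_{1/2})}$.

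The remaining task is to estimate $[g]_{C^\gamma(B_1)}$ by $[f]_{C^\gamma(B_1)}+C J_{u,k}$. The integral remainder $H(x):=\int_{B_2^c} u(y)\,\psi(x,y)\,|y|^{-n-2s-k}\,dy$ is smooth in $x\in B_1$, and by differentiating under the integral sign and using \eqref{PSI BUONA} together with the integrability encoded in $J_{u,k}$, each derivative $D^j H$ is bounded in $B_1$ by $C_j J_{u,k}$; in particular $[H]_{C^\gamma(B_1)}\le C J_{u,k}$. The genuinely important point, and the main (modest) obstacle in the argument, is to see that the polynomial correction $P$ contributes nothing to $[\bar f]_{C^\gamma(B_1)}$. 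Writing $\gamma=m+\theta$ with $m=\lfloor\gamma\rfloor\in\mathbb{N}$ and $\theta\in(0,1)$, the hypothesis $\gamma>k-1$ forces $m\ge k-1$, so $D^m P$ is either identically zero (when $m\ge k$) or a constant (when $m=k-1$); in either case $[P]_{C^\gamma(B_1)}=0$, and thus $[\bar f]_{C^\gamma(B_1)}=[f]_{C^\gamma(B_1)}$. This cancellation is exactly what the hypothesis $\gamma>k-1$ is designed to provide: it says that one differentiates enough times to annihilate the polynomial ambiguity inherent in Definition \ref{DE:k}. Putting the three bounds together yields \eqref{GH:OAA BIS}.
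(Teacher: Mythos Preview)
Your argument is correct, and the core idea---cut off $u$ to a compactly supported function, apply the sharp Schauder estimate of Proposition~\ref{SCHAUDER=k0}, and use $\gamma>k-1$ to kill the seminorm of the polynomial correction---is exactly the mechanism the paper uses. The organization, however, is a bit different and in fact more economical than the paper's. The paper first proves the estimate under the extra hypothesis $u\in C^\alpha_{\rm loc}(B_1)$ for some $\alpha>2s$: it invokes Remark~\ref{RE2} to write $(-\Delta)^s u\ugu f_u$ with the explicit $f_u$ of Corollary~\ref{POLLI2}, compares $f$ and $f_u$ via Lemma~\ref{UGUP}, and then applies Proposition~\ref{PRE:S} (which is essentially your computation with $\chi_4$ in place of $\chi_2$). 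To remove the smoothness assumption, the paper then mollifies $u$ and $f$ and passes to the limit. Your route sidesteps both the smoothness assumption and the mollification step in one stroke, because Corollary~\ref{2345923847hffhhfhfhfsksk} already delivers a genuine viscosity equation for $\chi_\rho u$ without asking that $u$ be better than continuous in $B_1$ (the smoothness needed in its proof is only for $(1-\chi_\rho)u$, which vanishes on $B_1$). What you gain is brevity; what the paper's route buys is that Proposition~\ref{PRE:S} records the estimate in terms of the canonical representative $f_u$, which is sometimes convenient elsewhere.
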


\begin{proof}
First of all, we prove the result under the additional assumption that
\begin{equation}\label{89sduiv1111}
{\mbox{$u\in C^\alpha_{\rm loc}(B_1)$ for some $\alpha>2s$.}}
\end{equation}
In this case, we fall under the assumptions of
Remark \ref{RE2}, and so we have that 
\begin{equation}\label{o90:x1}
(-\Delta)^s u\ugu f_u\quad{\mbox{ in }}B_1.\end{equation} 
Also,
\begin{equation}\label{o90:x0}
(-\Delta)^s u\ugu f\quad{\mbox{ in }}B_1,\end{equation}
Consequently, by \eqref{o90:x0} and~\eqref{o90:x1}, in view of
Lemma~\ref{UGUP},
it follows that $f-f_u$ is a polynomial $P_u$ of degree at most $k-1$.

{F}rom this and \eqref{c ga91}, we obtain that
$[f_u]_{C^\gamma(B_1)}=[f]_{C^\gamma(B_1)}$.
This and \eqref{4.15bis}
imply 
\eqref{GH:OAA BIS}.

Now we consider the general case. For this, we take~$\rho\in C^\infty_0(B_1)$
and consider the mollifier $\rho_\eps(x):=\eps^{-n}\rho(x/\eps)$.
We consider the convolutions~$u_\eps:= u*\rho_\eps$ and~$f_\eps:=f*\rho_\eps$
and we know (see e.g. formula~(3.2)
in~\cite{MR3161511}) that~$(-\Delta)^s u_\eps=f_\eps$ in~$B_{99/100}$,
as long as~$\eps$ is small enough.
Since~\eqref{89sduiv1111} is satisfied by~$u_\eps$, we can apply
the result already established and conclude that, up to resizing balls,
\begin{equation}\label{HJ01:X01}
\|u_\eps\|_{C^{\gamma+2s}(B_{9/10})}
\le C\, \left( [f_\eps]_{C^\gamma(B_{9/10})}+
\int_{B_{3/4}^c}\frac{|u_\e(y)|}{
|y|^{n+2s+k}}\,dy\right),
\end{equation}
for some~$C>0$. In particular,
$u_\e$ converges to~$u$ in~$C^{\gamma+2s}(B_{1/2})$
and, by taking limits, we have that
\begin{equation}\label{HJ01:X02}
\lim_{\e\to0} \|u_\eps\|_{C^{\gamma+2s}(B_{9/10})}\ge
\|u\|_{C^{\gamma+2s}(B_{1/2})} \quad{\mbox{ and }}\quad
\lim_{\e\to0} [f_\eps]_{C^\gamma(B_{9/10})}\le
[f]_{C^\gamma(B_{1})}.
\end{equation}
Furthermore, if~$y\in {B_{3/4}^c} $ and~$\xi\in {B_\e(y)}$, we have that
$$ |\xi|\le |y|+|\xi-y|\le |y|+\e\le 2|y|,$$
and therefore
\begin{equation}\label{HJ01:X03}
\begin{split}
&\int_{B_{3/4}^c}\frac{|u_\e(y)|}{
|y|^{n+2s+k}}\,dy \le
\int_{B_{3/4}^c} \left[ \int_{B_\e(y)}\frac{|u(\xi)|\,|\rho_\e(y-\xi)|}{
|y|^{n+2s+k}}\,d\xi \right]\,dy
\\ &\quad\qquad\le C\,
\int_{B_{3/4}^c} \left[ \int_{B_\e(y)}\frac{|u(\xi)|\,|\rho_\e(y-\xi)|}{
|\xi|^{n+2s+k}}\,d\xi \right]\,dy\le C\,
\int_{B_{1/2}^c} \left[ \int_{\R^n}\frac{|u(\xi)|\,|\rho_\e(y-\xi)|}{
|\xi|^{n+2s+k}}\,dy \right]\,d\xi\\ &\quad\qquad
=C\,
\int_{B_{1/2}^c} \frac{|u(\xi)|}{
|\xi|^{n+2s+k}}\,d\xi = CJ_{u,k}.
\end{split}
\end{equation}
So we plug \eqref{HJ01:X02} and~\eqref{HJ01:X03} into~\eqref{HJ01:X01}
and we obtain~\eqref{GH:OAA BIS}.
\end{proof}

With this we are now in the position of giving the proof
of Theorem \ref{SCHAUDER}:

\begin{proof}[Proof of Theorem \ref{SCHAUDER}] 
We claim that
\begin{equation}\label{788:103:01cvbnnnnn}
\|u\|_{C^{\gamma+2s}(B_{1/2})} \le
C\,\Big([f]_{C^\gamma(B_1;k)}+J_{u,k}\Big),
\end{equation}
for some~$C>0$.
We observe that when $\gamma>k-1$ the claim in~\eqref{788:103:01cvbnnnnn}
follows from Corollary~\ref{gamma barra}
and~\eqref{TOGLI}. Hence, we can now focus on the case in which
\begin{equation}\label{minore}
\gamma < k-1.\end{equation}
We take $v$ to be a solution of
\begin{equation}\label{1sdf-pokhjodv3}
(-\Delta)^s v=f\quad{\mbox{ in }}B_1,\end{equation}
with $v=0$ in $B_1^c$.

Then, from Proposition 1.1 in \cite{MR3168912}, we have that
\begin{equation}\label{788:103:01}
\| v\|_{C^s(\R^n)}\le C\,\|f\|_{L^\infty(B_1)},
\end{equation}
for some $C>0$. 

Also, from Proposition \ref{SCHAUDER=k0},
\begin{equation}\label{788:103:02}
\| v\|_{C^{\gamma+2s}(B_{1/2})}\le C\,\Big([f]_{C^\gamma(B_1)}+J_{v,0}\Big).
\end{equation}
Since, from \eqref{788:103:01},
\begin{equation}\label{788:103:02222} J_{v,0}\le C\,
\| v\|_{L^\infty(\R^n)}\le C\,\|f\|_{L^\infty(B_1)},\end{equation}
we deduce from \eqref{788:103:02} that
\begin{equation}\label{788:103:03}
\| v\|_{C^{\gamma+2s}(B_{1/2})}\le C\,\Big([f]_{C^\gamma(B_1)}+
\|f\|_{L^\infty(B_1)}\Big).
\end{equation}
Also, from \eqref{1sdf-pokhjodv3} and Corollary \ref{8IJAJS2wedfv1234},
we have that $(-\Delta )^s v
\;{\stackrel{0}{=}}\;f$ in $B_1$.

{F}rom this and \eqref{9idwfjdv827rutgjsakhdaaaaa2},
we conclude that
$(-\Delta)^s v
\;{\stackrel{k}{=}} \;f$ in $B_1$.

So, we define $w:=u-v$ and we have that $(-\Delta)^s w
\;{\stackrel{k}{=}} \;0$ in $B_1$. Hence, we take $\bar\gamma:=k-1+\e$, for a fixed,
small $\e>0$, and we are in the position of using
Corollary \ref{gamma barra} (notice indeed that $\bar\gamma$ satisfies
\eqref{c ga91}). In this way, we obtain that
\begin{equation} \label{8eywdhiujchgsgdggdd}
\|w\|_{C^{\bar\gamma+2s}(B_{1/2})}\le C\, J_{w,k}.\end{equation}
We also point out that
\begin{equation}\label{8eywdhiujchgsgdggdd2}
J_{w,k}\le J_{u,k}+J_{v,k}\le
J_{u,k}+C\,\|f\|_{L^\infty(B_1)},\end{equation}
where \eqref{788:103:02222} has been used once again.

Also, $\bar\gamma+2s\ge \gamma+2s$, due to~\eqref{minore}, and so
$$ \|w\|_{C^{\bar\gamma+2s}(B_{1/2})} \ge \|w\|_{C^{\gamma+2s}(B_{1/2})}
\ge \|u\|_{C^{\gamma+2s}(B_{1/2})} - \|v\|_{C^{\gamma+2s}(B_{1/2})}.$$
Using this, \eqref{8eywdhiujchgsgdggdd}
and \eqref{8eywdhiujchgsgdggdd2}, we find
$$ \|u\|_{C^{\gamma+2s}(B_{1/2})} \le
C\,\Big(\|v\|_{C^{\gamma+2s}(B_{1/2})}+ 
J_{u,k}+\|f\|_{L^\infty(B_1)}\Big).$$
This and \eqref{788:103:03} imply that
$$ \|u\|_{C^{\gamma+2s}(B_{1/2})} \le
C\,\Big([f]_{C^\gamma(B_1)}+
\|f\|_{L^\infty(B_1)}+J_{u,k}\Big).$$
Now, since this estimate is valid for $f$ satisfying $(-\Delta)^s u\ugu f$,
it must be valid also for $f+P$, for any polynomial $P$ of degree $k-1$
(recall \eqref{0dfhvnoooP}).
Consequently, we can write
$$ \|u\|_{C^{\gamma+2s}(B_{1/2})} \le
C\,\inf \Big([f+P]_{C^\gamma(B_1)}+
\|f+P\|_{L^\infty(B_1)}+J_{u,k}\Big).$$
{F}rom this and Lemma \ref{Xuax182rrrr3}, it follows that~\eqref{788:103:01cvbnnnnn}
holds true, as desired.

We remark that~\eqref{788:103:01cvbnnnnn}
is indeed the desired result in \eqref{GH:OAA}, except that we wish to replace 
$[f]_{C^\gamma(B_1;k)}$ with $[f]_{C^\gamma(B_1;\ell)}$ and
$J_{u,k}$
with $J_{u,\ell}$.

For this, we observe that both $[f]_{C^\gamma(B_1;j)}$
and $J_{u,j}$ are decreasing in $j\in\N$ (up to multiplicative constants).
Hence, when $\ell\le k$, then \eqref{GH:OAA} follows directly from~\eqref{788:103:01cvbnnnnn}.

On the other hand, when $\ell>k$ we
see that $(-\Delta)^s u
\;{\stackrel{\ell}{=}}\;f$ in $B_1$, thanks to \eqref{9idwfjdv827rutgjsakhdaaaaa2}.
So we can apply \eqref{788:103:01cvbnnnnn} with $\ell$ replacing $k$,
which is the desired result in \eqref{GH:OAA}.
\end{proof}

\section{Liouville Theorem
for divergent fractional Laplacians}\label{SECT:4}

By using the Schauder estimates in Theorem \ref{SCHAUDER} at any scale,
we can now give the proof
of Theorem \ref{LIOUVILLE}.

\begin{proof}[Proof of Theorem \ref{LIOUVILLE}]
We first suppose that $(-\Delta)^s u\ugu 0$ in $\R^n$
and we show that $u$ is necessarily a polynomial of degree at most $d(k,s)$.
For this, we take
$$ \gamma:=\left\{\begin{matrix} k+2-2s & {\mbox{ if }} s\in\left( \frac12,\,1\right),\\
k+1-2s & {\mbox{ if }} s\in\left( 0,\frac12\right].\end{matrix}
\right.$$
Notice that~$\gamma+2s> k$ and
$$ m:=
\left \lfloor{\gamma+2s}\right \rfloor 
=
\left\{\begin{matrix} k+2& {\mbox{ if }} s\in\left( \frac12,\,1\right),\\
k+1& {\mbox{ if }} s\in\left( 0,\frac12\right].\end{matrix}
\right.$$
In particular, we have that 
\begin{equation}\label{m grande}
m\ge k+2s.\end{equation}
Now, for any $j\in\N$, $j\ge1$, we define $u_j(x):=u(jx)$.
Then, $(-\Delta)^s u_j\ugu 0$ in $B_1$, hence
Theorem \ref{SCHAUDER} gives that
\begin{eqnarray*}
&& \|D^mu\|_{L^\infty(B_{j/2})}
=j^{-m} \|D^m u_j\|_{L^\infty(B_{1/2})}
\le
j^{-m} \|u_j\|_{C^{\gamma+2s}(B_{1/2})}
\le C\,j^{-m}\, J_{u_j,k}\\
&&\qquad\quad=
C\,j^{2s+k-m} \int_{B_{j/2}^c} \frac{|u(y)|}{|y|^{n+2s+k}}\,dy.
\end{eqnarray*}
So we can send $j\to+\infty$ and use \eqref{m grande} to see that $D^mu$ vanishes
identically, hence $u$ is a polynomial
of degree less than or equal to $m-1$, as desired.

Now, we prove the converse statement.
Namely, we show that 
\begin{equation}\label{CONV-LIOUVILLE}\begin{split}
&{\mbox{all the polynomials $P$ of degree at most $d(k,s)$}}
\\&{\mbox{satisfy $(-\Delta)^s P\ugu 0$ in $\R^n$.}}\end{split}\end{equation}
The proof of this is by induction over $k$. If $k=0$, then $d(k,s)=1$
if $s\in\left( \frac12,\,1\right)$ and $d(k,s)=0$
if $s\in\left( 0,\frac12\right]$. Hence, if $P$ has degree at most $d(k,s)$,
it follows that $P$ is affine if $s\in\left( \frac12,\,1\right)$
and constant if $s\in\left( 0,\frac12\right]$, and
$$ \int_{\R^n} \frac{|P(y)|}{1+|y|^{n+2s}}\,dy <+\infty.$$
In any case, $(-\Delta)^s P$ is well defined in the standard sense, and
$(-\Delta)^s P=0$ in $\R^n$. Accordingly, by 
Corollary \ref{8IJAJS2wedfv1234}, we have that
$(-\Delta )^s P\;{\stackrel{0}{=}}\;0$ in $\R^n$.

This is the desired result when $k=0$. Hence, we now suppose
recursively that the claim in \eqref{CONV-LIOUVILLE} holds true for $k-1$
and we prove it for $k$.

For this, we take a polynomial $P$ with degree at most $d(k,s)$
and, for any fixed $i\in\{1,\dots,n\}$, we set $Q_i:=\partial_i P$.
Notice that $Q_i$ is a polynomial with degree at most $d(k,s)-1=d(k-1,s)$.
Therefore, by the inductive hypothesis we know that
\begin{equation}\label{89iucve7e7e7hhfhfa}
(-\Delta )^s Q_i\;{\stackrel{k-1}{=}}\;0 \quad{\mbox{ in }} \R^n.
\end{equation}
Furthermore, by Theorem \ref{POLLI}
and Remark \ref{RE1} (recall also Remark~\ref{RE3}), we know that, fixed~$M>0$, for any large $R>0$,
\begin{equation} \label{3oghtgf76w56w7tfg84ytiufi}
(-\Delta)^s(\tau_R P)= f_P+ g_{R} + P_R\qquad{\mbox{in }}B_M,\end{equation}
where $\tau_R\in C^\infty_0(B_R,\,[0,1])$ with~$\tau_R=1$ in~$B_{R-1}$
and~$\|\nabla\tau_R\|_{L^\infty(\R^n)} \le 4$,
$P_R$ is a polynomial of degree $k-1$ and
$$ g_{R}(x):=
\int_{B_{R-1}^c} \frac{(\tau_R(y)-1)\,P(y)\,\psi(x,y)}{|y|^{n+2s+k}}\,dy.$$
We define
$$ \zeta_{R,i}:= (-\Delta)^s(\partial_i\tau_R P).$$
We claim that
\begin{equation}\label{pez}\begin{split}&
{\mbox{$\zeta_{R,i}=\tilde P_{R,i}+\tilde\zeta_{R,i}\;$, where~$\tilde P_{R,i}$
is a polynomial of degree $k-2$}}\\&{\mbox{and $\tilde\zeta_{R,i}
\to0\;\;$ in $\;B_M\;\;$ as $\;R\to+\infty$.}}\end{split}
\end{equation}
To check this, we observe that
$$ \partial_i\tau_R P = \tau_{R+1}\partial_i\tau_R P.$$
Thus, fixed~$M$, we can use Theorem~\ref{POLLI} (with~$\tau:=\tau_{R+1}$,
$u:=\partial_i\tau_R P$ and~$k$ replaced by~$k-1$)
and find that, for any~$x\in B_M$,
\begin{eqnarray*}
(-\Delta)^s (\partial_i\tau_R P)(x)&=& (-\Delta)^s (\tau_{R+1}\partial_i\tau_R P)(x)\\
&= &\tilde P_{R,i}(x)+\int_{B_{2M}} \frac{(\partial_i\tau_R P)(x)-
(\partial_i\tau_R P)(y)}{|x-y|^{n+2s}}\,dy+\int_{B_{2M}^c} 
\frac{(\partial_i\tau_R P)(x)}{|x-y|^{n+2s}}\,dy\\ &&\qquad+
\int_{B_{2M}^c} \frac{\tau_{R+1}(y)\, (\partial_i\tau_R P)(y)\;\psi(x,y)}{|y|^{n+2s+k-1}}
\,dy,
\end{eqnarray*}
for some polynomial $\tilde P_{R,i}$,
which has degree at most $k-2$.
Now, for large~$R$, the terms supported in~$B_{2M}$ vanish, namely
we can write that
$$ (-\Delta)^s (\partial_i\tau_R P)(x)=
\tilde P_{R,i}(x)+
\int_{B_R\setminus B_{R-1}} \frac{\tau_{R+1}(y)\, 
(\partial_i\tau_R P)(y)\;\psi(x,y)}{|y|^{n+2s+k-1}}
\,dy=\tilde P_{R,i}(x)+\tilde\zeta_{R,i}(x),$$
with
$$ \tilde\zeta_{R,i}(x):=\int_{B_R\setminus B_{R-1}} \frac{\tau_{R+1}(y)\, 
(\partial_i\tau_R P)(y)\;\psi(x,y)}{|y|^{n+2s+k-1}}
\,dy.$$
Hence, to prove~\eqref{pez}, we need to show that
\begin{equation}\label{hg345dfjvcadgojasg}
{\mbox{$\tilde\zeta_{R,i}
\to0\;\;$ in $\;B_M\;\;$ as $\;R\to+\infty$.}}
\end{equation}
To this aim, we recall \eqref{PSI BUONA} and we
compute, for large~$R$,
\begin{eqnarray*}
&& \left| 
\int_{B_R\setminus B_{R-1}} \frac{\tau_{R+1}(y)\, 
(\partial_i\tau_R P)(y)\;\psi(x,y)}{|y|^{n+2s+k-1}}
\,dy \right|
\le C\,
\int_{B_R\setminus B_{R-1}} \frac{|P(y)|}{|y|^{n+2s+k-1}}
\,dy \\ &&\qquad\qquad\le
C\,
\int_{B_R\setminus B_{R-1}} \frac{R^{d(k,s)}}{|y|^{n+2s+k-1}}
\,dy \le C\,R^{d(k,s) -2s-k},
\end{eqnarray*}
up to renaming~$C$ at any step. The latter quantity is infinitesimal
as~$R\to+\infty$, thanks to~\eqref{dksdef}.
This establishes~\eqref{hg345dfjvcadgojasg}, and so~\eqref{pez}.

Notice also that
$$ \partial_i (-\Delta)^s(\tau_R P) =
(-\Delta)^s(\partial_i\tau_R P)+(-\Delta)^s(\tau_R \partial_i P)
=(-\Delta)^s(\partial_i\tau_R P)+(-\Delta)^s(\tau_R Q_i).$$
Accordingly, 
by~\eqref{3oghtgf76w56w7tfg84ytiufi} and~\eqref{pez}, we obtain that,
in $B_{M}$, 
\begin{equation} \label{iojdlv2345fb}
(-\Delta)^s(\tau_R Q_i)=\partial_i (-\Delta)^s(\tau_R P)-(-\Delta)^s(\partial_i\tau_R P)
= \partial_i f_P+ \partial_i g_{R} + \partial_i P_R-\tilde P_{R,i}-\tilde\zeta_{R,i}.\end{equation}
Notice that, in view of \eqref{PSI BUONA}, we have that $\partial_i g_{R}\to0$.
Also, $\partial_i P_R$ is a polynomial of degree $k-2$.
In consequence of these observations and \eqref{iojdlv2345fb}, 
we have that~$(-\Delta )^s Q_i\;{\stackrel{k-1}{=}}\; \partial_i f_P$
in~$B_M$. 
{F}rom this, \eqref{89iucve7e7e7hhfhfa} and
Lemma \ref{UGUP}, we obtain that
there exists a polynomial $Q^\star_i$ of degree at most $k-2$ such that $
\partial_i f_P=Q^\star_i$.

This implies that, in~$B_M$,
\begin{equation}\label{osjcJJ1234JA}
{\mbox{$f_P$ is a polynomial of degree at most $k-1$.}}\end{equation}
On the other hand, from Remark \ref{RE2},
we know that $(-\Delta)^s P\ugu f_P$ in $B_M$.
Using this and \eqref{osjcJJ1234JA}, and recalling \eqref{0dfhvnoooP},
we can write $(-\Delta)^s P\ugu 0$ in $B_M$.
Since~$M>0$ is arbitrary, it follows that~$(-\Delta)^s P\ugu 0$ in~$\R^n$,
as desired.
\end{proof}

\section{Stability of divergent fractional Laplacians}\label{STABS}

The goal of this section is to prove Theorem \ref{STAB}, namely
that the divergent fractional Laplacian is stable under limits
that are compatible with the viscosity setting.
For this, 
we first consider the simpler case in which the functions
vanish in~$B_1$ (the advantage of this setting being that
the smoothness assumption in Remark~\ref{RE2} is obviously
satisfied). The precise result goes as follows:

\begin{lemma}\label{vkpre}
Let $s\in(0,1)$ and $k\in\N$.
Let us consider sequences of functions $v_m:\R^n\to\R$ and $g_m:B_1\to\R$ such that
$v_m=0$ in $B_1$
and $g_m$ is continuous in~$B_1$, with
\begin{equation}\label{ususKKK78uidaaa3r}\sup_{m\in\N}
\int_{\R^n} \frac{|v_m(y)|}{1+|y|^{n+2s+k}}\,dy < +\infty
,\end{equation}
and
\begin{equation}\label{89dw9duuuuu78uidaaa3r9quwehfg}
(-\Delta)^s v_m \ugu g_m \qquad{\mbox{ in }}\;B_1.\end{equation}
Assume that $g_m\to g$ a.e. in $B_1$
as $m\to +\infty$, for some function~$g:B_1\to\R$.

Suppose also that 
\begin{equation}\label{9isckneyeyeyyehsh}
\lim_{m\to+\infty} \int_{B_1^c} \frac{v_m(y)\,\varphi(y)}{|y|^{n+2s+k}}\,dy=
\int_{B_1^c} \frac{v(y)\,\varphi(y)}{|y|^{n+2s+k}}\,dy\end{equation}
for any $\varphi\in L^\infty(B_1^c)$,
for some function $v:\R^n\to\R$ with~$v=0$ in~$B_1$.

Then, it holds that
\begin{equation}\label{9iuwde83ueghhfhfhfhf536475890yogjfhh11}
(-\Delta)^s v\ugu g \qquad{\mbox{ in }}\;B_1.\end{equation}
\end{lemma}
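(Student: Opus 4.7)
The idea is to leverage the fact that $v_m$ and $v$ vanish on $B_1$ (hence are trivially in $C^\alpha_{\rm loc}(B_1)$ for any $\alpha>2s$), so that Remark~\ref{RE2} and Corollary~\ref{POLLI2} furnish explicit representatives $f_{v_m}$ and $f_v$ of the corresponding divergent fractional Laplacians via formula~\eqref{FU1}. As a preliminary, one checks that $v$ itself satisfies the growth condition~\eqref{INTE}: apply~\eqref{9isckneyeyeyyehsh} with the test function $\varphi(y):=\mathbf{1}_{B_n\setminus B_1}(y)\,\mathrm{sgn}\,v(y)\in L^\infty(B_1^c)$, use~\eqref{ususKKK78uidaaa3r} to bound the left-hand side uniformly in $m$, and then send $n\to+\infty$. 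Consequently $(-\Delta)^s v_m\ugu f_{v_m}$ and $(-\Delta)^s v\ugu f_v$ in $B_1$, where, because $v_m\equiv v\equiv 0$ on $B_1$, the piece $f_{2,v_m}$ vanishes identically and $f_{1,v_m}(x)=-\int_{B_2\setminus B_1}v_m(y)/|x-y|^{n+2s}\,dy$ (analogously for $v$).

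Combining $(-\Delta)^s v_m\ugu g_m$ from~\eqref{89dw9duuuuu78uidaaa3r9quwehfg} with $(-\Delta)^s v_m\ugu f_{v_m}$, Lemma~\ref{UGUP} forces $P_m:=g_m-f_{v_m}$ to be a polynomial of degree at most $k-1$. Next, for each fixed $x\in B_1$ one checks that $f_{v_m}(x)\to f_v(x)$ by rewriting each piece as $\int_{B_1^c}v_m(y)\varphi(y)/|y|^{n+2s+k}\,dy$ with $\varphi\in L^\infty(B_1^c)$ depending on $x$, and invoking~\eqref{9isckneyeyeyyehsh}: for $f_{1,v_m}$, take $\varphi_1(y):=-|y|^{n+2s+k}\,\mathbf{1}_{B_2\setminus B_1}(y)/|x-y|^{n+2s}$, bounded because $|x-y|\ge 1-|x|>0$ for $y\notin B_1$ and $|y|\le 2$; for $f_{3,v_m}$, take $\varphi_3(y):=\psi(x,y)\,\mathbf{1}_{B_2^c}(y)$, bounded by~\eqref{PSI BUONA}; the second piece is zero.

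Putting the two facts together, $P_m=g_m-f_{v_m}$ converges a.e.\ in $B_1$ to $g-f_v$. Since polynomials of degree at most $k-1$ form a finite-dimensional vector space $\mathcal{P}_{k-1}$, pick $N:=\dim\mathcal{P}_{k-1}$ points $x_1,\dots,x_N$ in the convergence set in general position (possible since the bad tuples form a proper algebraic subvariety of $(B_1)^N$ while the convergence set has full measure): the evaluation map $\mathcal{P}_{k-1}\to\R^N$ at these points is an isomorphism, so convergence of $(P_m(x_1),\dots,P_m(x_N))$ in $\R^N$ forces $P_m$ to converge in $\mathcal{P}_{k-1}$, in any norm, to a polynomial $P$ of degree at most $k-1$. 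Hence $g=f_v+P$ a.e.\ in $B_1$. By the invariance~\eqref{0dfhvnoooP} applied to $(-\Delta)^s v\ugu f_v$, we obtain $(-\Delta)^s v\ugu f_v+P$, which after identifying $g$ with its a.e.\ representative $f_v+P$ yields~\eqref{9iuwde83ueghhfhfhfhf536475890yogjfhh11}.

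The trickiest step is the last one: upgrading a.e.\ pointwise convergence of the $P_m$ to a strong enough mode of convergence in $\mathcal{P}_{k-1}$ to identify the limit polynomial $P$, and reconciling the a.e.\ hypothesis $g_m\to g$ with the pointwise-everywhere requirement of Definition~\ref{DE:k}; both issues are handled via the finite-dimensionality of $\mathcal{P}_{k-1}$ combined with the polynomial invariance~\eqref{0dfhvnoooP}.
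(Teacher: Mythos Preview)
Your proof is correct and follows essentially the same route as the paper: compute the explicit representative $f_{v_m}$ via Remark~\ref{RE2}/Corollary~\ref{POLLI2}, use Lemma~\ref{UGUP} to see $g_m-f_{v_m}$ is a polynomial, pass the integrals to the limit via the weak convergence~\eqref{9isckneyeyeyyehsh}, verify the growth of $v$, and finish with~\eqref{0dfhvnoooP}. The only difference is that where the paper simply invokes Lemma~\ref{9uids2345hv} to identify the limiting polynomial, you argue directly by picking $N=\dim\mathcal P_{k-1}$ points in general position inside the full-measure convergence set; this is exactly the content of the proof of Lemma~\ref{9uids2345hv}, but you are more careful about the fact that $g_m\to g$ is only assumed a.e.\ (the paper's citation of Lemma~\ref{9uids2345hv}, whose hypothesis is stated for all $x\in U$, glosses over this point, though its proof only needs finitely many points).
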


\begin{proof} We can use Remark~\ref{RE2} and~\eqref{FU1} and find that, for any $x\in B_1$,
$$ (-\Delta)^s v_m(x)\ugu f_{v_m}(x)
=-\int_{B_2\setminus B_1} \frac{v_m(y)}{|x-y|^{n+2s}}\,dy+
\int_{B_2^c} \frac{v_m(y)\,\psi(x,y)}{|y|^{n+2s+k}}\,dy.$$
{F}rom this, \eqref{89dw9duuuuu78uidaaa3r9quwehfg} and
Lemma \ref{UGUP}, we obtain that
$$ g_{m}(x)=-\int_{B_2\setminus B_1} \frac{v_m(y)}{|x-y|^{n+2s}}\,dy+
\int_{B_2^c} \frac{v_m(y)\,\psi(x,y)}{|y|^{n+2s+k}}\,dy +P_m(x),$$
where $P_m$ is a polynomial of degree at most~$k-1$.

We stress that, fixed~$x\in B_1$, 
$$ \inf_{y\in B_2\setminus B_1}|x-y|\ge \inf_{y\in B_2\setminus B_1}|y|-|x|=1-|x|,$$
and so the function~$y\mapsto
\frac{1}{|x-y|^{n+2s}}$ belongs to~$L^\infty(B_2\setminus B_1)$.
Thus, in view of~\eqref{PSI BUONA}
and~\eqref{9isckneyeyeyyehsh}, we have that, for any fixed~$x\in B_1$,
$$ \lim_{m\to+\infty} P_m(x)=g(x)+\int_{B_2\setminus B_1} \frac{v(y)}{|x-y|^{n+2s}}\,dy-
\int_{B_2^c} \frac{v(y)\,\psi(x,y)}{|y|^{n+2s+k}}\,dy.$$
This and Lemma~\ref{9uids2345hv} imply that
there exists a polynomial~$P$ of degree at most~$k-1$ such that
\begin{equation}\label{9s9s14256378tifjdhgsfpqpqpq}
P(x)=g(x)+
\int_{B_2\setminus B_1} \frac{v(y)}{|x-y|^{n+2s}}\,dy
-\int_{B_2^c} \frac{v(y)\,\psi(x,y)}{|y|^{n+2s+k}}\,dy.\end{equation}
Also, using~\eqref{9isckneyeyeyyehsh} with~$\varphi:=\chi_{(0,+\infty)}(v (y))$,
we see that
$$ \int_{B_1^c} \frac{v_+(y)}{|y|^{n+2s+k}}\,dy
=\lim_{m\to+\infty} \int_{B_1^c} \frac{v_m(y)\,\chi_{(0,+\infty)}(v (y))}{
|y|^{n+2s+k}}\,dy\le \sup_{m\in\N}\int_{B_1^c} \frac{|v_m(y)|}{
|y|^{n+2s+k}}\,dy ,$$
which is finite,
thanks to~\eqref{ususKKK78uidaaa3r}.
With a similar computation on~$v_-$, we thus conclude that
$$ \int_{B_1^c} \frac{|v(y)|}{|y|^{n+2s+k}}\,dy
<+\infty.$$
So, we can use
Remark~\ref{RE2} on~$v$ and obtain
$$ (-\Delta)^s v(x)\ugu f_{v}(x)
=
-\int_{B_2\setminus B_1} \frac{v(y)}{|x-y|^{n+2s}}\,dy+
\int_{B_2^c} \frac{v(y)\,\psi(x,y)}{|y|^{n+2s+k}}\,dy.$$
{F}rom this, \eqref{0dfhvnoooP}
and~\eqref{9s9s14256378tifjdhgsfpqpqpq}, we deduce that~$(-\Delta)^s v(x)\ugu g$,
as desired.
\end{proof}

With this preliminary result, we can complete the proof of
the stability theorem, by arguing as follows:

\begin{proof}[Proof of Theorem \ref{STAB}] We set
\begin{eqnarray*}
&& v_m:=(1-\chi_1)\,u_m, \qquad w_m:=\chi_1 u_m,\\
&& v:=(1-\chi_1)\,u\qquad{\mbox{and}}\qquad w:=\chi_1 u.
\end{eqnarray*}
By construction, $v_m\to v$ and~$w_m\to w$ locally uniformly in~$B_1$, as~$m\to+\infty$.

In light of \eqref{78uidaaa3r9quwehfg} and
Corollary~\ref{2345923847hffhhfhfhfsksk} (used here with~$\rho:=1$), we know that
\begin{equation}\label{Si1230139}
(-\Delta)^s w_m = \bar f_m+\int_{B_2\setminus B_1} \frac{u_m(y)}{|x-y|^{n+2s}}\,dy-
\int_{B_2^c}\frac{u_m(y)\,\psi(x,y)}{|y|^{n+2s+k}}\,dy=:h_m\end{equation}
in~$B_1$ in the sense of viscosity,
where
\begin{equation}\label{8du6105984}
\bar f_m:=f_m+P_m\end{equation} and $P_m$ is a polynomial
of degree at most~$k-1$. Thus, from Corollary~\ref{8IJAJS2wedfv1234} and
\eqref{9idwfjdv827rutgjsakhdaaaaa2}
we obtain that
$$ (-\Delta)^s w_m \ugu \bar f_m+
\int_{B_2\setminus B_1} \frac{u_m(y)}{|x-y|^{n+2s}}\,dy
-\int_{B_2^c}\frac{u_m(y)\,\psi(x,y)}{
|y|^{n+2s+k}}\,dy.$$
Hence, in view of~\eqref{0dfhvnoooP}, we obtain that
$$ (-\Delta)^s w_m \ugu f_m+
\int_{B_2\setminus B_1} \frac{u_m(y)}{|x-y|^{n+2s}}\,dy
-\int_{B_2^c}
\frac{u_m(y)\,\psi(x,y)}{|y|^{n+2s+k}}\,dy.$$
As a consequence, for any~$x\in B_1$,
\begin{equation}\label{9fusususu010187654338475}
\begin{split}& (-\Delta)^s v_m(x)= 
(-\Delta)^s u_m(x)-(-\Delta)^s w_m(x)\\&\qquad\qquad
\ugu f_m(x)-\left(
f_m(x)+
\int_{B_2\setminus B_1} \frac{u_m(y)}{|x-y|^{n+2s}}\,dy-\int_{B_2^c}\frac{u_m(y)\,\psi(x,y)}{|y|^{n+2s+k}}\,dy\right)\\
&\qquad\qquad=-\int_{B_2\setminus B_1} \frac{u_m(y)}{|x-y|^{n+2s}}\,dy+
\int_{B_2^c}\frac{u_m(y)\,\psi(x,y)}{|y|^{n+2s+k}} \,dy=:g_m(x)
.\end{split}\end{equation}
Let also
$$ g(x):=-\int_{B_2\setminus B_1} \frac{u(y)}{|x-y|^{n+2s}}\,dy+
\int_{B_2^c}\frac{u(y)\,\psi(x,y)}{|y|^{n+2s+k}}\,dy.$$
Notice that, by~\eqref{PSI BUONA} and~\eqref{iodsjhdhsuususu4857489},
we have that~$g_m\to g$ pointwise in~$B_1$.

Also, fixed any~$\rho\in(0,1)$, by~\eqref{PSI BUONA},
\begin{eqnarray*}
\sup_{x\in B_\rho}|\nabla g_m(x)|&\le& \sup_{x\in B_\rho} C\,\left(
\int_{B_2\setminus B_1} \frac{|u_m(y)|}{|x-y|^{n+2s+1}}\,dy
+
\int_{B_2^c}\frac{|u_m(y)|\,|\nabla\psi(x,y)|}{|y|^{n+2s+k}}\,dy\right)\\
&\le& C\,\left(
\int_{B_2\setminus B_1} \frac{|u_m(y)|}{(1-\rho)^{n+2s+1}}\,dy+
\int_{B_2^c}\frac{|u_m(y)|}{|y|^{n+2s+k}}\,dy\right),
\end{eqnarray*}
which is bounded uniformly in~$m$,
thanks to~\eqref{iodsjhdhsuususu4857489:PRE}.
Accordingly, by the Theorem of Ascoli,
\begin{equation}\label{89iodc73847812}
{\mbox{$g_m\to g$ locally uniformly in~$B_1$.}}\end{equation}
Thus, from \eqref{9fusususu010187654338475} and Lemma~\ref{vkpre},
we conclude that
\begin{equation}\label{9fusususu010187654338475:2}
(-\Delta)^s v \ugu g\qquad{\mbox{in }}B_1.\end{equation}
Now we prove that
\begin{equation}\label{9fusususu010187654338475:3}
(-\Delta)^s w \ugu f-g \qquad{\mbox{in }}B_1.\end{equation}
For this, we take~$\varphi\in C^\infty_0(B_1)$. We let~$U\Subset B_1$
be the support of~$\varphi$ and we fix~$\e>0$ suitably small
(also in dependence of~$U$ and~$B_1$).
We take~$\rho\in C^\infty_0(B_1)$ and~$\rho_\eps(x):=
\eps^{-n}\rho(x/\eps)$.
We consider the convolutions~$w_{m,\eps}:= w_m
*\rho_\eps$ and~$h_{m,\eps}:=h_m*\rho_\eps$.
Notice that~$w_{m,\eps}$ is smooth and compactly supported in~$B_{11/10}$.
Then (see e.g. formula~(3.2)
in~\cite{MR3161511}) 
we have that~$(-\Delta)^s w_{m,\eps}=h_{m,\eps}$ in~$U$ in the smooth sense.
Therefore we can write that
\begin{eqnarray*}
&& \int_{U}  h_{m,\eps} \varphi
=\int_{U}  (-\Delta)^s w_{m,\eps} \varphi=
\int_{\R^n}  (-\Delta)^s w_{m,\eps} \varphi\\&&\qquad\qquad
=\int_{\R^n}  w_{m,\eps} \,(-\Delta)^s\varphi=
\int_{B_{11/10}}  w_{m,\eps} \,(-\Delta)^s\varphi
.\end{eqnarray*}
Hence, for any~$m'$, $m\in\N$,
$$ \left| \int_{U}  (h_{m,\eps} -h_{m',\eps})\,\varphi\right|
\le
\int_{B_{11/10}}  |w_{m,\eps} -w_{m',\eps}|\,|(-\Delta)^s\varphi|.$$
Since $w_m\in L^1(\R^n)$,
by sending~$\eps\to0$, we thus obtain that
\begin{equation} \label{80234857:01}\begin{split}&
\left| \int_{U}  (h_{m} -h_{m'})\,\varphi\right|
\le
\int_{B_{11/10}}  |w_{m} -w_{m'}|\,|(-\Delta)^s\varphi|\\ &\qquad=
\int_{B_{1}}  |w_{m} -w_{m'}|\,|(-\Delta)^s\varphi|
\le C\,\|w_{m} -w_{m'}\|_{L^1(B_1)} \,\|\varphi\|_{C^2(\R^n)}.\end{split}\end{equation}
{F}rom the convergence of~$u_m$ in~$L^1(B_1)$, we also have that
\begin{equation}\label{80234857:02}
\lim_{m\to+\infty} \|w_{m} -\chi_1 u\|_{L^1(B_1)}=\lim_{m\to+\infty} \|u_{m} - u\|_{L^1(B_1)}
=0.
\end{equation}
{F}rom \eqref{80234857:01} and \eqref{80234857:02}, it follows
that~$h_m$ is a Cauchy sequence in the norm~$ 
\|\cdot\|_\star$ introduced in~\eqref{no0195678tif}.
{F}rom the uniform convergence, we also know that~$f_m$
is a Cauchy sequence in the norm~$ 
\|\cdot\|_\star$. Moreover, by~\eqref{89iodc73847812},
we have that~$g_m$ is also a Cauchy sequence in
the norm~$\|\cdot\|_\star$.

Since
\begin{equation}\label{09w8wiejfhg7ydwghcjweiugf}
P_m=g_m+h_m-f_m,\end{equation} these observations
imply that~$P_m$ is also
a Cauchy sequence in
the norm~$\|\cdot\|_\star$ and so, in consequence of
Lemma~\ref{45678959uids2345hv},
we obtain that~$P_m$ converges uniformly 
to some polynomial $P$ of degree at most $k-1$ in~$U$, for any~$U\Subset B_1$.

This and~\eqref{8du6105984} imply that~$\bar f_m$ converges
locally uniformly in~$B_1$.
Hence, writing~$h_m=\bar f_m-g_m$, we conclude that~$h_m$
also converges
locally uniformly in~$B_1$ to some function $h$.

We are therefore in the position to use Lemma~5 in~\cite{MR2781586}
and deduce from~\eqref{Si1230139} that~$(-\Delta)^s w=h$
in~$B_1$ in the sense of viscosity. Hence, by Corollary~\ref{8IJAJS2wedfv1234},
we can write~$(-\Delta )^s w\;{\stackrel{0}{=}}\;h$ in~$B_1$.

Passing to the limit in \eqref{09w8wiejfhg7ydwghcjweiugf},
we obtain that
$$ P=g+h-f$$
and so~$(-\Delta )^s w\;{\stackrel{0}{=}}\;f-g+P$ in~$B_1$.

{F}rom this,~\eqref{9idwfjdv827rutgjsakhdaaaaa2} and \eqref{0dfhvnoooP},
we conclude that~\eqref{9fusususu010187654338475:3} holds true, as desired.

Now, by~\eqref{9fusususu010187654338475:2} and~\eqref{9fusususu010187654338475:3},
we obtain that
$$ (-\Delta)^su=(-\Delta)^sv+(-\Delta)^sw\ugu g+(f-g)=f,$$
as desired.
\end{proof}

A useful consequence of Theorem \ref{STAB} is also
a stability result under convolution, which goes as follows:

\begin{proposition}
Let $k\in\N$, $s\in(0,1)$. Assume that $u$ and
$f$ are continuous functions in $B_1$, with
\begin{equation} \label{09sdugfnklsfe245}
\int_{\R^n}\frac{|u(y)|}{1+|y|^{n+2s+k}}<+\infty\end{equation}
and
$$ (-\Delta)^s u \ugu f \qquad{\mbox{ in }}\;B_1.$$
Let~$\e>0$, $\rho\in C^\infty_0(B_1)$
and $\rho_\eps(x):=\eps^{-n}\rho(x/\eps)$.
Let~$u_\eps:= u*\rho_\eps$ and~$f_\eps:=f*\rho_\eps$.
Then
$$ (-\Delta)^s u_\e \ugu f_\e \qquad{\mbox{ in }}\;B_{99/100},$$
as long as $\e$ is small enough.
\end{proposition}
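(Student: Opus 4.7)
The plan is to reduce the statement to an application of Theorem \ref{STAB} by approximating the mollification $u_\e$ by Riemann sums of translates of $u$. The two supporting principles are translation invariance and linearity of the relation $\ugu$.

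First I would establish translation invariance: if $(-\Delta)^s u \ugu f$ in $B_1$ and $z \in \R^n$ with $|z| \le 1/100$, then the translate $u^z(x):=u(x-z)$ satisfies $(-\Delta)^s u^z \ugu f^z$ in $B_{99/100}$ where $f^z(x):=f(x-z)$. The shifted cutoffs $\tau_R(y):=\chi_R(y-z)$ satisfy $\tau_R\to 1$ a.e., and by the philosophy of Remark \ref{RE3} (applied after a brief mollification of $u^z$, using \eqref{09sdugfnklsfe245} to control the growth of $u^z$) one may use $\tau_R$ in place of $\chi_R$ in Definition \ref{DE:k}. Since $(-\Delta)^s(\tau_R u^z)(x)=(-\Delta)^s(\chi_R u)(x-z)=f_R(x-z)+P_R(x-z)$ and $P_R(\cdot-z)$ remains a polynomial of degree at most $k-1$, the translated relation holds.

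Next, linearity is immediate from Definition \ref{DE:k}: if $(-\Delta)^s u^{(j)}\ugu f^{(j)}$ in $B_1$ for finitely many $j$ and $c_j\in\R$, then summing the identities \eqref{98e28jkgf12345s} and the convergences \eqref{9873irtugefjhdvdghaiugdgf234:00} gives $(-\Delta)^s \sum_j c_j u^{(j)}\ugu \sum_j c_j f^{(j)}$ in $B_1$. Combining this with translation invariance, for $\e\le 1/100$ and any partition $\{Q_j^{(N)}\}$ of $\operatorname{supp}\rho_\e\subset B_\e$ with $z_j^{(N)}\in Q_j^{(N)}$ and $\operatorname{diam}(Q_j^{(N)})\le 1/N$, the Riemann-sum surrogates
\begin{equation*}
u_\e^{(N)}(x):=\sum_j \rho_\e(z_j^{(N)})|Q_j^{(N)}|\,u(x-z_j^{(N)}),\qquad f_\e^{(N)}(x):=\sum_j \rho_\e(z_j^{(N)})|Q_j^{(N)}|\,f(x-z_j^{(N)})
\end{equation*}
satisfy $(-\Delta)^s u_\e^{(N)} \ugu f_\e^{(N)}$ in $B_{99/100}$.

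Finally, I would pass to the limit $N\to+\infty$ by verifying the hypotheses of Theorem \ref{STAB}. Continuity of $u$ and $f$ in $B_1$ and the fact that the $z_j^{(N)}$ lie in the compact set $\overline{B_\e}$ yield $u_\e^{(N)}\to u_\e$ and $f_\e^{(N)}\to f_\e$ locally uniformly in $B_{99/100}$, and $u_\e^{(N)}\to u_\e$ in $L^1(B_{99/100})$. The uniform bound \eqref{iodsjhdhsuususu4857489:PRE} and the weak convergence \eqref{iodsjhdhsuususu4857489} (with $k$ replaced by $k$) follow from \eqref{09sdugfnklsfe245} together with Fubini and the elementary inequality $1+|y|^{n+2s+k}\le C\,(1+|y-z_j^{(N)}|^{n+2s+k})$ valid uniformly in $j,N$ since $|z_j^{(N)}|\le\e\le 1$. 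Theorem \ref{STAB} then yields $(-\Delta)^s u_\e\ugu f_\e$ in $B_{99/100}$, which is the desired conclusion.

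The most delicate step is the translation invariance: the literal cutoff $\chi_R(y)u(y-z)$ is not the translate of $\chi_R(y)u(y)$, and the discrepancy supported on the annulus $B_R\,\triangle\, B_R(-z)$ must be shown to produce only a contribution that is either infinitesimal as $R\to+\infty$ or absorbed into the polynomial family $P_R$; once that is in place (via the cutoff flexibility of Remark \ref{RE3} and the expansion in Theorem \ref{POLLI}), the remaining steps are standard applications of the machinery developed earlier in the paper.
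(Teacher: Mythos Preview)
Your approach is valid in outline but genuinely different from the paper's, and two of your steps are underspecified in ways that matter.

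\textbf{Comparison with the paper.} The paper never passes through translations or Riemann sums. It mollifies the truncated equation directly: since $\chi_R u$ is bounded with compact support, the cited identity from~\cite{MR3161511} gives $(-\Delta)^s\big((\chi_R u)*\rho_\e\big)=(f_R+P_R)*\rho_\e$ pointwise in $B_{99/100}$. The key observation is that $P_R*\rho_\e$ is again a polynomial of degree at most $k-1$, so $(-\Delta)^s v_{R,\e}\ugu f_\e+\eta_R*\rho_\e$ with $v_{R,\e}:=(\chi_R u)*\rho_\e$. Then one sends $R\to+\infty$ via Theorem~\ref{STAB} (using the optimal representative of Lemma~\ref{09sc5tgdwefguv} to upgrade $\eta_R\to0$ to uniform convergence). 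This sidesteps both the translation-invariance and the linearity issues entirely.

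\textbf{Gaps in your route.} First, your linearity step is not ``immediate from Definition~\ref{DE:k}'': identity~\eqref{98e28jkgf12345s} is a viscosity equation, and viscosity solutions of $(-\Delta)^s w=g$ cannot be summed by definition alone. One has to invoke the equivalence of viscosity and distributional solutions for the fractional Laplacian (as in~\cite{MR3161511}), sum distributionally, and return to viscosity; you should say so. Second, the weak convergence~\eqref{iodsjhdhsuususu4857489} for your Riemann sums is not just ``Fubini and the elementary inequality'': for fixed $\varphi\in L^\infty$ you need the function $z\mapsto \int_{\R^n}\frac{u(y-z)\varphi(y)}{1+|y|^{n+2s+k}}\,dy$ to be continuous on $\overline{B_\e}$, and this requires the $L^1$--continuity of translations applied to $u(y)/(1+|y|^{n+2s+k})$, together with the weight comparability you mention. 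Third, your appeal to Remark~\ref{RE3} ``after a brief mollification of $u^z$'' for translation invariance is circular, since mollification stability is exactly what you are proving; the clean argument is the one you allude to at the end, namely applying the expansion of Theorem~\ref{POLLI} directly to the annular function $(\chi_R-\chi_R(\cdot-z))u^z$, which vanishes in $B_2$ for large $R$ and whose remainder term is controlled by $\int_{B_{R-1}^c}|u|/|y|^{n+2s+k}\,dy\to0$.

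With these points filled in your argument goes through, but the paper's direct mollification of the truncated equation is both shorter and avoids the viscosity-summation subtlety altogether.
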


\begin{proof} We know that
$$ (-\Delta)^s(\chi_R u) = f+\eta_R+P_R$$
in $B_1$, in the viscosity sense, with $\eta_R\to0$ in $B_1$ as $R\to+\infty$
and $P_R$ is a polynomial of degree at most $k-1$. As a matter of fact,
by choosing the ``optimal representative'' in Lemma \ref{09sc5tgdwefguv},
we can also suppose that
\begin{equation}\label{89uojcd2eruy}
{\mbox{$\eta_R\to0$ uniformly in $B_1$.}}
\end{equation}
Let also 
$$ v_{R,\e}(x):= (\chi_R u)*\rho_\eps(x).$$
Hence
(see e.g. formula~(3.2)
in~\cite{MR3161511}) in $B_{99/100}$ we have that
$$ (-\Delta)^s v_{R,\e}=f_\e+\eta_R*\rho_\eps+P_R*\rho_\eps.$$
Hence, by Corollary \ref{8IJAJS2wedfv1234},
$$ (-\Delta)^s v_{R,\e} \;{\stackrel{0}{=}}\;f_\e+\eta_R*\rho_\eps+P_R*\rho_\eps.$$
So, by \eqref{9idwfjdv827rutgjsakhdaaaaa2},
we have that
\begin{equation}\label{miw8wchhis:2}
(-\Delta)^s v_{R,\e} \;{\stackrel{k}{=}}\;f_\e+\eta_R*\rho_\eps+P_R*\rho_\eps
.\end{equation}
Now we check that
\begin{equation}\label{miw8wchhis}
{\mbox{$ P_R*\rho_\eps$ is a polynomial of degree at most $k-1$.}}
\end{equation}
For this, we can reduce to the case of monomials, and compute,
for any $\alpha\in\N^n$ with $|\alpha|\le k-1$, that
$$ x^\alpha *\rho_\eps =
\int_{\R^n} (x-y)^\alpha \rho_\eps(y)\,dy
= \sum_{\beta\le \alpha}\left( {\alpha}\atop{\beta}\right) 
x^\beta \int_{\R^n} 
(-y)^{\alpha-\beta}\rho_\eps(y)\,dy,$$
which is a polynomial of degree at most $k-1$. This observation proves \eqref{miw8wchhis}.

Then, from \eqref{0dfhvnoooP}, \eqref{miw8wchhis:2}
and \eqref{miw8wchhis}, we conclude that
\begin{equation}\label{miw8wchhis:3}
(-\Delta)^s v_{R,\e} \;{\stackrel{k}{=}}\;f_\e+\eta_R*\rho_\eps.\end{equation}
Our objective is now to send $R\to+\infty$ and use the stability result
in Theorem \ref{STAB}. 
To this aim, we define
$$ v^\star_\e(x):= \int_{\R^n} |u(y)|\, \rho_\eps(x-y)\,dy.$$
We observe that
\begin{equation}\label{miw8wchhis:4}
\int_{B_{99/100}^c} \frac{v_\e^\star(y)}{|y|^{n+2s+k}}\,dy<+\infty,
\end{equation}
see \eqref{HJ01:X03}. Moreover, we have that
\begin{equation}\label{miw8wchhis:5}
v_{R,\e}(x)\le
\int_{\R^n}\big| (\chi_R u)(y)\big|\,\rho_\eps(x-y)\,dy\le
v_\e^\star(x).\end{equation}
In addition
$$ \big| (\chi_R u)(y)\big|\,\rho_\eps(x-y) \le \e^{-n}\,|u(y)| \,\chi_{B_\e(x)}(y)\le
(|x|+1)^{n+2s+k}\frac{|u(y)|}{\e^n\,|y|^{n+2s+k}}.$$
This and \eqref{09sdugfnklsfe245} allow us to use the Dominated Convergence
Theorem and take the limit as~$R\to+\infty$ (for a fixed~$\e>0$).
In this way, we see that, for any fixed $x\in\R^n$,
\begin{equation*}
\lim_{R\to+\infty} v_{R,\e}(x) =
\int_{\R^n} \lim_{R\to+\infty} (\chi_R u)(y) \rho_\eps(x-y)\,dy=
u_\e(x).
\end{equation*}
This, \eqref{miw8wchhis:4} and \eqref{miw8wchhis:5} allow us to use
again the Dominated Convergence
Theorem to take the limit as~$R\to+\infty$ and obtain that
\begin{equation}\label{9e2oy3riufhj}
\lim_{R\to+\infty} \int_{B_{99/100}^c} \frac{|v_{R,\e}(x) -u_\e(x)|}{|x|^{n+2s+k}}\,dx
=0.
\end{equation}
Also, $v_{R,\e}\to u_\e$ and, in view of \eqref{89uojcd2eruy},
$f_\e+\eta_R*\rho_\eps\to f_\e$
locally uniformly in $B_{99/100}$ as $R\to+\infty$.
{F}rom this, \eqref{miw8wchhis:5}
and \eqref{9e2oy3riufhj}, we can exploit
Theorem \ref{STAB} and deduce from \eqref{miw8wchhis:3} that
$$ (-\Delta)^s u_\e \;{\stackrel{k}{=}}\;f_\e$$
in $B_{99/100}$, as desired.
\end{proof}

\begin{appendix}

\section{Appendix A. Summary of the finite differences method}

We recall here the classical method of the finite differences (or incremental quotients).
Given $\omega\in \R^n$, we consider the shift operator acting on functions,
namely $T_\omega f(x):=f(x+\omega)$. Of course, if $\omega=0$, this operator
boils down to the identity operator, which will be denoted by $I$.

For any $h\in(0,1)$ and $\omega\in S^{n-1}$, we set
$$ D_h^\omega:= T_{h\omega}-I.$$
Then, for any $(\omega_1,\dots,\omega_d)\in (S^{n-1})^d$
and any $h\in(0,1)$ we consider the finite difference operator
$$ D_h^{ (\omega_1,\dots,\omega_d) } := D_h^{\omega_1}\,\dots\,D_h^{\omega_d}.$$
Notice that, since the shift operators commute with themselves,
we also have that $ D_h^{ (\omega_1,\dots,\omega_d) } = D_h^{\omega_d}\,\dots\,D_h^{\omega_1}$.

The finite differences of order $d$ approximate the derivatives of order $d$
(after a renormalization of size $h^d$), as pointed out in the following result:

\begin{lemma}\label{DERIVE}
There exists $\xi:\R^n\to [0,1]^d$ such that
$$ h^{-d}\;D_h^{ (\omega_1,\dots,\omega_d) } f(x)=
\sum_{1\le i_1,\dots i_d\le n} \frac{\partial^d f}{\partial x_{i_1}\dots \partial x_{i_d}}
\big(x+h\xi_1(x)\omega_1+\dots+h\xi_d(x)\omega_d\big)\;
\omega_{1 i_1} \dots\omega_{d i_d}.$$
\end{lemma}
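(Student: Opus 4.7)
The plan is to reduce the finite-difference identity to an integral representation via the fundamental theorem of calculus, and then extract a single point $\xi \in [0,1]^d$ via a multidimensional mean value theorem for integrals.

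First, for each direction $\omega_j$ and any sufficiently smooth function $g$, the fundamental theorem of calculus gives
$$D_h^{\omega_j} g(y) = g(y+h\omega_j)-g(y) = h\int_0^1 \sum_{i_j=1}^n \frac{\partial g}{\partial x_{i_j}}(y + t_j h\omega_j)\,\omega_{j,i_j}\,dt_j.$$
Applying this identity iteratively (starting with $j=1$, then $j=2$, and so on), and exchanging each $D_h^{\omega_j}$ with the integrals coming from the previous steps (an operation justified by the linearity of $D_h^{\omega_j}$, which is merely a difference of two evaluations), yields
$$h^{-d} D_h^{(\omega_1,\dots,\omega_d)} f(x) = \int_{[0,1]^d} \Phi(t_1,\dots,t_d)\,dt_1\cdots dt_d,$$
where
$$\Phi(t_1,\dots,t_d):= \sum_{1\le i_1,\dots,i_d\le n}\frac{\partial^d f}{\partial x_{i_1}\cdots\partial x_{i_d}}\Bigl(x+\sum_{j=1}^d h t_j\omega_j\Bigr)\,\omega_{1,i_1}\cdots\omega_{d,i_d}.$$

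Second, assuming $f$ is of class $C^d$, the function $\Phi$ is continuous on the compact connected set $[0,1]^d$. Its integral over $[0,1]^d$ lies between its minimum and maximum values (the domain has unit volume), and by the intermediate value theorem there exists $\xi(x)=(\xi_1(x),\dots,\xi_d(x))\in[0,1]^d$ such that
$$\int_{[0,1]^d}\Phi(t)\,dt = \Phi(\xi_1(x),\dots,\xi_d(x)).$$
Combining this with the integral representation gives exactly the statement of the lemma, with a single map $\xi:\R^n\to[0,1]^d$ (whose value may depend on $x$ and on $h$).

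Conceptually both steps are standard and there is no real obstacle; the only care needed is bookkeeping in the iterated application of the fundamental theorem of calculus and the legitimacy of interchanging $D_h^{\omega_j}$ with the integrals over $t_i$ for $i<j$, which follows from linearity since $D_h^{\omega_j}$ is a bounded operator acting on continuous functions.
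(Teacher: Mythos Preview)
Your proof is correct and takes a genuinely different route from the paper. The paper argues by induction on $d$, applying the one-dimensional Mean Value Theorem at each step: from the $(d-1)$-case it writes $h^{1-d}D_h^{(\omega_1,\dots,\omega_{d-1})}f(x)$ as a sum of $(d-1)$th partial derivatives evaluated at $x+h\xi_1\omega_1+\dots+h\xi_{d-1}\omega_{d-1}$, and then applies the Mean Value Theorem in the $\omega_d$ direction to each of these terms. Your approach instead builds the full integral representation over $[0,1]^d$ via iterated use of the fundamental theorem of calculus, and then invokes the intermediate value theorem once on the continuous integrand $\Phi$ to extract a single point $\xi(x)\in[0,1]^d$.

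Your method has a concrete advantage: it produces one $\xi=(\xi_1,\dots,\xi_d)$ that works simultaneously for the whole sum over $i_1,\dots,i_d$, which is exactly what the statement asks for. In the paper's inductive argument the Mean Value Theorem is applied separately to each partial derivative $\partial^{d-1}f/\partial x_{i_1}\cdots\partial x_{i_{d-1}}$, so the resulting $\xi_d$ a priori depends on the multi-index; the paper passes over this with ``These observations easily imply the desired claim.'' For the applications in the paper (bounding $h^{-d}D_h^{(\omega_1,\dots,\omega_d)}f$ by the $C^d$ norm of $f$) this distinction is harmless, but your argument matches the stated formulation more transparently. The paper's approach, on the other hand, is slightly more elementary in that it never writes down an integral.
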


\begin{proof} We argue by induction over $d$.
When $d=1$, we use the Mean Value Theorem and we see that
$$ D_h^{\omega_1} f(x) = f(x+h\omega_1)-f(x)=
\nabla f(x+h\xi_1(x)\omega_1)\cdot (h\omega_1),$$
for some $\xi_1:\R^n\to [0,1]$. 

This is the desired claim when $d=1$. Hence, we now suppose that the claim is true
for $d-1$ and we prove it for $d$. For this,
we assume that
\begin{eqnarray*}&& h^{1-d}\;D_h^{ (\omega_1,\dots,\omega_{d-1}) } f(x)\\ &=&
\sum_{1\le i_1,\dots i_{d-1}\le n} \frac{\partial^{d-1} f}{
\partial x_{i_1}\dots \partial x_{i_{d-1}}}
\big(x+h\xi_1(x)\omega_1+\dots+h\xi_{d-1}(x)\omega_{d-1}\big)\;
\omega_{1 i_1} \dots\omega_{d-1 i_{d-1}}\end{eqnarray*}
and we use the Mean Value Theorem to see that
\begin{eqnarray*} &&\frac{\partial^{d-1} f}{
\partial x_{i_1}\dots \partial x_{i_{d-1}}}
\big(x+h\xi_1(x)\omega_1+\dots+h\xi_{d-1}(x)\omega_{d-1}+h\omega_d\big)
\\ &&\qquad-
\frac{\partial^{d-1} f}{
\partial x_{i_1}\dots \partial x_{i_{d-1}}}
\big(x+h\xi_1(x)\omega_1+\dots+h\xi_{d-1}(x)\omega_{d-1}\big)
\\ &=&
\sum_{i_d=1}^n
\frac{\partial^{d} f}{
\partial x_{i_1}\dots \partial x_{i_{d}}}
\big(x+h\xi_1(x)\omega_1+\dots+h\xi_{d}(x)\omega_{d}\big)\,(h\omega_{d i_d}),\end{eqnarray*}
for some $\xi_d:\R^n\to [0,1]$. These observations easily imply the desired claim.
\end{proof}

We also give the following integration by parts formula:

\begin{lemma}\label{ojskdbv63748596}
Let $f\in L^1(\R^n)$ and $g\in L^\infty(\R^n)$. Then
$$ \int_{\R^n} D^{(\omega_1,\dots,\omega_d)}_h f(x)\, g(x)\,dx=
\int_{\R^n} f(x)\,D^{(-\omega_1,\dots,-\omega_d)}_h g(x)\, dx
.$$\end{lemma}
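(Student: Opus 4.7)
The plan is to reduce to the case $d=1$ and then iterate. For the base case, I would expand the left-hand side as
\begin{equation*}
\int_{\R^n} D_h^{\omega_1} f(x)\,g(x)\,dx = \int_{\R^n} f(x+h\omega_1)\,g(x)\,dx - \int_{\R^n} f(x)\,g(x)\,dx
\end{equation*}
and perform the change of variables $y := x+h\omega_1$ in the first integral. By translation invariance of Lebesgue measure, this becomes $\int_{\R^n} f(y)\,g(y-h\omega_1)\,dy$. Relabeling the dummy variable and rearranging yields $\int_{\R^n} f(x)\,D_h^{-\omega_1} g(x)\,dx$, which is the claim for $d=1$. All manipulations are justified by absolute convergence, since $f \in L^1(\R^n)$ and $g \in L^\infty(\R^n)$ give $fg \in L^1(\R^n)$, and translates of $L^1$ functions remain in $L^1$.

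For the inductive step, suppose the identity holds for $d-1$ operators. Set $F := D_h^{(\omega_2,\dots,\omega_d)} f$. Since $F$ is a linear combination of finitely many translates of $f$, we have $F \in L^1(\R^n)$; similarly $D_h^{-\omega_1} g \in L^\infty(\R^n)$. Applying the base case to the pair $(F,g)$ and then the inductive hypothesis to the pair $(f, D_h^{-\omega_1} g)$ gives
\begin{equation*}
\int_{\R^n} D_h^{(\omega_1,\dots,\omega_d)} f \cdot g\,dx = \int_{\R^n} F\cdot D_h^{-\omega_1}g\,dx = \int_{\R^n} f\cdot D_h^{(-\omega_2,\dots,-\omega_d)} D_h^{-\omega_1} g\,dx.
\end{equation*}
Using the fact, noted in the paper just before Lemma~\ref{DERIVE}, that the shift operators commute, the composition $D_h^{(-\omega_2,\dots,-\omega_d)} D_h^{-\omega_1}$ equals $D_h^{(-\omega_1,-\omega_2,\dots,-\omega_d)}$, which yields the claim.

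I do not anticipate a serious obstacle: the statement is the discrete counterpart of integration by parts, encoding the fact that the formal adjoint of $D_h^{\omega}$ on the dual pairing $(L^1,L^\infty)$ is $D_h^{-\omega}$. The only care needed is bookkeeping of the integrability classes along the induction, which is automatic since $L^1$ and $L^\infty$ are both preserved under finite differences with step $h$.
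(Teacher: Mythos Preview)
Your proposal is correct and follows essentially the same approach as the paper: both argue by induction on $d$, handle the base case via the change of variables $y=x+h\omega_1$, and then peel off one finite-difference operator at a time in the inductive step. The only cosmetic difference is that the paper separates off $\omega_d$ first (applying the inductive hypothesis before the base case) while you separate off $\omega_1$ first; both orderings rely on the commutativity of shift operators to reassemble the product at the end.
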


\begin{proof} We argue by induction on $d$. If $d=1$, then
\begin{eqnarray*}
&& \int_{\R^n} D^{\omega_1}_h f(x)\, g(x)\,dx=
\int_{\R^n} f(x+h\omega_1)\, g(x)\,dx
-\int_{\R^n} f(x)\, g(x)\,dx\\
&&\qquad\qquad=
\int_{\R^n} f(x)\, g(x-h\omega_1)\,dx
-\int_{\R^n} f(x)\, g(x)\,dx =
\int_{\R^n} f(x)\,D^{-\omega_1}_h g(x)\,dx,
\end{eqnarray*}
as desired.

For the inductive step, we compute recursively that
\begin{eqnarray*}
&& \int_{\R^n} 
D^{(\omega_1,\dots,\omega_d)}_h f(x)\, g(x)\,dx=
\int_{\R^n}
D^{(\omega_1,\dots,\omega_{d-1})}_h D^{\omega_d}_hf(x)\, g(x)\,dx
\\ &&\qquad\qquad=
\int_{\R^n}
D^{\omega_d}_hf(x)\, D^{(-\omega_1,\dots,-\omega_{d-1})}_h g(x)\,dx
=
\int_{\R^n}
f(x)\, D^{-\omega_d}_h
D^{(-\omega_1,\dots,-\omega_{d-1})}_h g(x)\,dx
\\ &&\qquad\qquad=
\int_{\R^n} f(x)\,D^{(-\omega_1,\dots,-\omega_d)}_h g(x)\, dx,\end{eqnarray*}
which is the desired result.\end{proof}

\section{Appendix B. Proof of Lemmata~\ref{9uids2345hv}
and~\ref{45678959uids2345hv}}

One proof of Lemma~\ref{9uids2345hv}
can be done by exploiting the
finite incremental quotients of order~$d$
(as discussed in Appendix A), to show that~$D^d F$ vanishes.

Another simple, and more geometric, argument is based on the idea
that polynomials are, after all, a finite dimensional space,
and finite dimensional spaces are closed, with respect to any equivalent norm.
The details are the following.

\begin{proof}[Proof of Lemma~\ref{9uids2345hv}]
Up to a translation, we suppose that
\begin{equation}\label{0inu}
0\in U.\end{equation}
Also, without loss of generality, we can suppose that 
\begin{equation}\label{larger}
{\mbox{$m$
in the statement of Lemma \ref{9uids2345hv} is larger than $d$.}}\end{equation}
We define~$N$ to be the number of multi-indices~$\mu\in\N^n$
for which~$|\mu|:=\mu_1+\dots+\mu_n\le d-1$.
In this way, we can endow~$\R^N$ with an ordering
and consider the map~$T$ from~$\R^N$ to the space of polynomials
of degree at most~$d-1$, which is given by
$$ \R^N \ni a=\{ a_\mu\}_{|\mu|\le d-1} \longmapsto
T(a):=\sum_{|\mu|\le d-1} a_\mu x^\mu.$$
We fix distinct points~$q_1,\dots,q_d\in U\subseteq\R^n$.
Then,
on~$\R^N$, we consider the two norms
\begin{eqnarray*}&&
\| a \|_1 := \sum_{i=1}^d |T(a)(q_i)|\\
{\mbox{and }}&&
\| a \|_2 := \| T(a) \|_{C^m(U)}.
\end{eqnarray*}
It is interesting to remark that~$\|\cdot\|_1$
is indeed a norm. For this, suppose that~$\| a \|_1 =0$.
Then, it follows that~$T(a)(q_i)=0$ for any~$i=1,\dots,d$,
hence the polynomial~$T(a)$, which has degree at most~$d-1$,
vanishes on~$d$ different points, and so it has to be zero,
which in turn implies that~$a=0$.

We also write
$$ P^{(j)}=\sum_{|\mu|\le d-1} a_\mu^{(j)} x^\mu,$$
with~$a^{(j)}=\{ a_\mu^{(j)} \}_{|\mu|\le d-1}$.
We remark that
$$ T(a^{(j)}) = P^{(j)}.$$
Therefore,
given~$\eta>0$, if~$j$, $j'\in\N$ are 
sufficiently large (possibly in dependence of~$\eta$), we have that
$$ \| a^{(j)}-a^{(j')} \|_1\le\eta,$$
thanks to~\eqref{6y89ufidhgg177171}, and so~$a^{(j)}$
is a Cauchy sequence in~$\R^N$, with respect to the norm~$\|\cdot\|_1$.

{F}rom the equivalence of the norms in~$\R^N$, it thus follows that~$a^{(j)}$
is a Cauchy sequence in~$\R^N$, with respect to the norm~$\|\cdot\|_2$.
Consequently, given~$\eta>0$, if~$j$, $j'\in\N$ are
sufficiently large,
$$ \eta \ge 
\| a^{(j)}-a^{(j')} \|_2=
\| P^{(j)}- P^{(j')} \|_{C^m(U)}.$$
Therefore, we have that~$P^{(j)}$ is a sequence of functions
that is of Cauchy type in~$C^m(U)$, and so it converges
to some function~$P^\star$ in~$C^m(U)$.

In particular, the sequence~$P^{(j)}$ is bounded in~$C^m(U)$.
{F}rom this and \eqref{larger} we obtain that,
for any~$\mu\in\N^n$ with~$|\mu|\le d-1$,
$$ \sup_{j\in\N} \| P^{(j)}\|_{C^m(U)}\ge
\sup_{j\in\N} \| D^\mu P^{(j)}\|_{L^\infty(U)} \ge
| D^\mu P^{(j)} (0)| = \mu!\,| a^{(j)}_\mu|,$$
thanks to~\eqref{0inu}.
Hence, for any~$\mu\in\N^n$ with~$|\mu|\le d-1$,
up to a subsequence, we have that~$a^{(j)}_\mu\to a^{\star}_\mu$
as $j\to+\infty$, for some~$a^{\star}_\mu\in\R$.
Thus, possibly passing to a subsequence and using~\eqref{6y89ufidhgg177171},
we have that, for any~$x\in U$,
$$ F(x)= \lim_{j\to+\infty} P^{(j)}(x)=\lim_{j\to+\infty}
\sum_{|\mu|\le d-1} a^{(j)}_\mu x^\mu =
\sum_{|\mu|\le d-1} a^{\star}_\mu x^\mu
,$$
that is the desired result.
\end{proof}

\begin{proof}[Proof of Lemma~\ref{45678959uids2345hv}]
We use the setting given by the proof of
Lemma~\ref{9uids2345hv}, and
we define the norm in~$\R^N$ given, for $a=\{a_\mu\}_{|\mu|\le d-1}$, by
$$ \|a\|_3:=
\sup_{{\varphi\in C^2_0(U)}\atop{\|\varphi\|_{C^2(U)}
\le 1}} \int_{U} \sum_{|\mu|\le d-1} a_\mu\,x^\mu\,\varphi(x)\,dx.$$
We see that~$a_\mu^{(j)}$ is a Cauchy sequence with respect to the norm~$\|\cdot\|_3$
and so it converges to some~$a^\star=\{a_\mu^\star\}_{|\mu|\le d-1}\in\R^N$,
with respect to the norm~$\|\cdot\|_3$.

{F}rom the equivalence of the norms in~$\R^N$, we conclude that
$$ 0=\lim_{j\to+\infty}\| a_\mu^{(j)} - a_\mu^{\star}\|_2=
\lim_{j\to+\infty} \| P^{(j)}-T(a^\star)\|_{C^m(U)},$$
which implies the desired result.
\end{proof}

\section{Appendix C. Reabsorbing lower order norms}

The scope of this appendix is to show the following result:

\begin{lemma}\label{Xuax182rrrr3}
Let $k\in\N$, $\gamma\in(0,+\infty)\setminus\N$,
with 
\begin{equation} \label{SFON}
\lfloor\gamma\rfloor\le k-1,\end{equation}
and $f\in C^\gamma(B_1)$. Then
$$ \inf \|f-P\|_{L^\infty(B_1)}+[f-P]_{C^\gamma(B_1)}
\le C\,[f]_{C^\gamma(B_1;k)},$$
where the $\inf$ is taken over all the polynomials~$P$
of degree $k-1$, and $C>0$ depends on $n$, $\gamma$ and $k$.
\end{lemma}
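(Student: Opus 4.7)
The plan is to fix a polynomial $Q$ of degree at most $k-1$ that approximately achieves the infimum defining $[f]_{C^\gamma(B_1;k)}$, set $g:=f-Q$, and then correct $Q$ by adding to it the $m$-th Taylor polynomial of $g$ centred at the origin, where $\gamma=m+\theta$ with $m=\lfloor\gamma\rfloor$ and $\theta\in(0,1)$. Since $[g]_{C^\gamma(B_1)}=[f-Q]_{C^\gamma(B_1)}$ is finite, $g$ possesses classical derivatives up to order $m$ with $D^m g\in C^\theta(B_1)$, so
$$R(x):=\sum_{|\alpha|\le m}\frac{D^\alpha g(0)}{\alpha!}\,x^\alpha$$
is well defined. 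By hypothesis \eqref{SFON}, $\deg R\le m\le k-1$, so $P:=Q+R$ is a polynomial of degree at most $k-1$, admissible in the infimum on the left.

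To estimate $\|f-P\|_{L^\infty(B_1)}=\|g-R\|_{L^\infty(B_1)}$, I will apply Taylor's theorem with integral remainder to $g$ at the origin; the remainder couples the factor $|x|^m$ with a convex combination of increments $D^\alpha g(tx)-D^\alpha g(0)$ (for $|\alpha|=m$ and $t\in[0,1]$). Since each such increment is bounded by $[D^m g]_{C^\theta(B_1)}\,|tx|^\theta\le [g]_{C^\gamma(B_1)}\,|x|^\theta$, I obtain
$$|g(x)-R(x)|\le C\,|x|^\gamma\,[g]_{C^\gamma(B_1)}\le C\,[f-Q]_{C^\gamma(B_1)},$$
uniformly in $x\in B_1$. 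For the seminorm part, because $\deg R\le m$ the derivative $D^m R$ is constant, hence $[R]_{C^\gamma(B_1)}=[D^m R]_{C^\theta(B_1)}=0$; consequently
$$[f-P]_{C^\gamma(B_1)}=[g-R]_{C^\gamma(B_1)}=[g]_{C^\gamma(B_1)}=[f-Q]_{C^\gamma(B_1)}.$$

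Summing the two estimates gives $\|f-P\|_{L^\infty(B_1)}+[f-P]_{C^\gamma(B_1)}\le C\,[f-Q]_{C^\gamma(B_1)}$, and taking the infimum over $Q$ on the right yields the conclusion. The argument is entirely soft; the sole structural role of hypothesis \eqref{SFON} is to guarantee that the Taylor correction $R$ fits within the class of polynomials of degree at most $k-1$, which is really a consistency condition rather than an obstacle, so I do not anticipate any genuinely hard step. A minor technical remark is that Taylor's formula requires $B_1$ to sit inside the domain of $g$ starred at the base point, which is transparent since $0\in B_1$ (or, if one prefers, any fixed point of $\overline{B_1}$) and $B_1$ is convex.
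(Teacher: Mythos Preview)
Your argument is correct and uses essentially the same idea as the paper: subtract the degree-$m$ Taylor polynomial at the origin and bound the integral remainder by the $C^\gamma$ seminorm, relying on \eqref{SFON} only to ensure this correction stays in the admissible class. Your presentation is in fact somewhat more direct than the paper's, which carries out the same computation by splitting $P=P_1+P_2$ according to degree and working with the remainder $R_f$, but the content is the same.
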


\begin{proof} We write $\gamma=m+\theta$, with $m:=\lfloor\gamma\rfloor\in\N$ and 
$\theta\in(0,1)$. 
We set \begin{eqnarray*}&& T_f(x):=\sum_{|\alpha|\le m} 
\frac{\partial^\alpha f(0)}{\alpha!}x^\alpha ,\\ && G_{f,\alpha}(x):= m\int_0^1 
(1-t)^{m-1} \Big( \partial^\alpha f(tx)-\partial^\alpha f(0)\Big)\,dt \\ {\mbox{and }}&& 
R_f(x):= \sum_{|\alpha|=m} G_{f,\alpha}(x)\,\frac{x^\alpha}{\alpha!}.\end{eqnarray*} 
We observe that $T_f$, $G_{f,\alpha}$ and $R_f$ are linear with respect to $f$
and, in particular,
$$ R_{f+g}=R_f+R_g,$$
for any functions $f$ and $g$.

Notice also that, if $|\alpha|=m$, $$ [ G_{f,\alpha}]_{C^\theta(B_1)} \le C [D^m 
f]_{C^\theta(B_1)}\le C [f]_{C^\gamma(B_1)}.$$ Moreover, a Taylor expansion of $f$ gives that 
$$ f= T_f + R_f.$$ Fix also a generic polynomial $P$ of degree at most $k-1$ of the form 
\begin{eqnarray*} && P(x)=P_1(x)+P_2(x),\\ {\mbox{with }}&&P_1(x):=\sum_{|\alpha|\le m} 
p_\alpha x^\alpha\\ {\mbox{and }}&& P_2(x):=\sum_{|\alpha|\in [m+1,k-1]} p_\alpha 
x^\alpha.\end{eqnarray*} Then, using the $\inf$ notation in the statement of Lemma 
\ref{Xuax182rrrr3}, \begin{equation} \begin{split} \label{9osckv67rugjvcbxnxnxnow} 
\inf_{P} \|f-P\|_{L^\infty(B_1)}+[f-P]_{C^\gamma(B_1)}\,&= \inf_{P_1,P_2} 
\|f-P_1-P_2\|_{L^\infty(B_1)}+[f-P_1-P_2]_{C^\gamma(B_1)}\\ &\le \inf_{P_2} 
\|f-T_f-P_2\|_{L^\infty(B_1)}+[f-T_f-P_2]_{C^\gamma(B_1)} \\&= \inf_{P_2} 
\|R_f-P_2\|_{L^\infty(B_1)}+[R_f-P_2]_{C^\gamma(B_1)} \\ &= \inf_{P_2} 
\|R_{f-P_2}\|_{L^\infty(B_1)}+[R_{f-P_2}]_{C^\gamma(B_1)} .\end{split} \end{equation} 
Now, we observe that, for any function $g$ and any $x\in B_1$, $$ [g]_{C^\gamma(B_1)} \ge 
\frac{|D^m g(x)-D^m g(0)|}{|x|^\theta} \ge |D^m g(x)-D^m g(0)|.$$ 
Since $D^m R_h(0)=0$ for any function $h$,
we can apply the latter estimate with 
$g:=R_{f-P_2}$ and find that $$ [R_{f-P_2}]_{C^\gamma(B_1)} \ge |D^m R_{f-P_2}(x)|,$$ and 
thus, taking supremum over $x\in B_1$, 
\begin{equation} \label{9aixchtwfghwgbrf}
[R_{f-P_2}]_{C^\gamma(B_1)} \ge \| D^m R_{f-P_2}\|_{L^\infty(B_1)}.\end{equation} 
Now we observe that, for any function $g$ with $g(0)=0$
one has that
$$ \|g\|_{L^\infty(B_1)}=\sup_{x\in B_1}
|g(x)|=\sup_{x\in B_1} |g(x)-g(0)|\le \|\nabla g\|_{L^\infty(B_1)}.$$
Since, for any function $h$, it holds that $D^j R_h(0)=0$
for any $j\in \{0,\dots,m-1\}$, we can apply this estimate
repeatedly and find that
$$ \|R_h\|_{L^\infty(B_1)} \le C\,\|\nabla R_h\|_{L^\infty(B_1)}
\le \dots\le C\, \|D^m R_h\|_{L^\infty(B_1)},$$
up to renaming $C>0$.

{F}rom this and \eqref{9aixchtwfghwgbrf}, we obtain
$$ \| R_{f-P_2}\|_{L^\infty(B_1)}
\le C\, [ R_{f-P_2}]_{C^\gamma(B_1)}.$$
So, we insert 
this information into \eqref{9osckv67rugjvcbxnxnxnow} and we obtain 
\begin{equation}\label{7dhdhhd812is22} \begin{split}&\inf_{P} 
\|f-P\|_{L^\infty(B_1)}+[f-P]_{C^\gamma(B_1)} \le 
2\,\inf_{P_2}[R_{f-P_2}]_{C^\gamma(B_1)}\\ &\qquad\quad= 2\,\inf_{P_2}[R_{f}-R_{P_2}]_{C^\gamma(B_1)} 
=2\,\inf_{P_2}[R_{f}-{P_2}]_{C^\gamma(B_1)} .\end{split}
\end{equation} We also remark that, since 
$\gamma>m$, it holds that
$$ [h-\bar P-P_2]_{C^\gamma(B_1)} =[h-P_2]_{C^\gamma(B_1)},$$ for any 
function $h$ and any polynomial $\bar P$ of degree at most $m$, hence 
\eqref{7dhdhhd812is22} gives that $$ \inf_{P} 
\|f-P\|_{L^\infty(B_1)}+[f-P]_{C^\gamma(B_1)} \le 2\,\inf_{P_2}[R_{f}-\bar 
P-{P_2}]_{C^\gamma(B_1)}.$$ We choose now $\bar P:= \bar Q - T_f$, where $\bar Q$ is a 
generic polynomial of degree at most $m$. In this way, we obtain $$ \inf_{P} 
\|f-P\|_{L^\infty(B_1)}+[f-P]_{C^\gamma(B_1)} \le 2\,\inf_{P_2}[R_{f}+T_f-\bar 
Q-{P_2}]_{C^\gamma(B_1)}= 2\,\inf_{P_2}[f-\bar Q-{P_2}]_{C^\gamma(B_1)}.$$ Since $\bar 
Q+P_2$ is now the generic polynomial of degree at most $k-1$ (notice indeed that $m\le 
k-1$, in view of \eqref{SFON}), we obtain $$ \inf_{P} 
\|f-P\|_{L^\infty(B_1)}+[f-P]_{C^\gamma(B_1)} \le 2\,\inf_{P}[f-P]_{C^\gamma(B_1)},$$ as 
desired (recall \eqref{CGAMMA}). \end{proof}

\end{appendix}

\section*{References}
\begin{biblist}[\normalsize]

\bib{MR3331523}{article}{
   author={Barrios, Bego{\~n}a},
   author={Figalli, Alessio},
   author={Valdinoci, Enrico},
   title={Bootstrap regularity for integro-differential operators and its
   application to nonlocal minimal surfaces},
   journal={Ann. Sc. Norm. Super. Pisa Cl. Sci. (5)},
   volume={13},
   date={2014},
   number={3},
   pages={609--639},
   issn={0391-173X},
   review={\MR{3331523}},
}

\bib{brezis}{book}{
   author={Brezis, Haim},
   title={Functional analysis, Sobolev spaces and partial differential
   equations},
   series={Universitext},
   publisher={Springer, New York},
   date={2011},
   pages={xiv+599},
   isbn={978-0-387-70913-0},
   review={\MR{2759829}},
}

\bib{2016arXiv160102548C}{article}{
   author = {{Caffarelli}, Luis},
   author = {{De Silva}, Daniela}, 
   author = {{Savin}, Ovidiu},
    title = {The two membranes problem for different operators},
  journal = {ArXiv e-prints},
archivePrefix = {arXiv},
   eprint = {1601.02548},
 primaryClass = {math.AP},
 keywords = {Mathematics - Analysis of PDEs},
     year = {2016},
    month = {jan},
   adsurl = {http://adsabs.harvard.edu/abs/2016arXiv160102548C},
  adsnote = {Provided by the SAO/NASA Astrophysics Data System}
}

\bib{2016arXiv160105843C}{article}{
   author = {{Caffarelli}, Luis},
   author = {{Ros-Oton}, Xavier},
   author = {{Serra}, Joaquim},
    title = {Obstacle problems for integro-differential operators: Regularity of solutions and free boundaries},
  journal = {ArXiv e-prints},
archivePrefix = {arXiv},
   eprint = {1601.05843},
 primaryClass = {math.AP},
 keywords = {Mathematics - Analysis of PDEs},
     year = {2016},
    month = {jan},
   adsurl = {http://adsabs.harvard.edu/abs/2016arXiv160105843C},
  adsnote = {Provided by the SAO/NASA Astrophysics Data System}
}

\bib{MR2781586}{article}{
   author={Caffarelli, Luis},
   author={Silvestre, Luis},
   title={Regularity results for nonlocal equations by approximation},
   journal={Arch. Ration. Mech. Anal.},
   volume={200},
   date={2011},
   number={1},
   pages={59--88},
   issn={0003-9527},
   review={\MR{2781586}},
   doi={10.1007/s00205-010-0336-4},
}

\bib{MR3348929}{article}{
   author={Chen, Wenxiong},
   author={D'Ambrosio, Lorenzo},
   author={Li, Yan},
   title={Some Liouville theorems for the fractional Laplacian},
   journal={Nonlinear Anal.},
   volume={121},
   date={2015},
   pages={370--381},
   issn={0362-546X},
   review={\MR{3348929}},
   doi={10.1016/j.na.2014.11.003},
}

\bib{MR2944369}{article}{
   author={Di Nezza, Eleonora},
   author={Palatucci, Giampiero},
   author={Valdinoci, Enrico},
   title={Hitchhiker's guide to the fractional Sobolev spaces},
   journal={Bull. Sci. Math.},
   volume={136},
   date={2012},
   number={5},
   pages={521--573},
   issn={0007-4497},
   review={\MR{2944369}},
   doi={10.1016/j.bulsci.2011.12.004},
}

\bib{2014arXiv1404.3652D}{article}{
   author = {{Dipierro}, Serena},
author = {{Savin}, Ovidiu},
author = {{Valdinoci}, Enrico},
    title = {All functions are locally $s$-harmonic up to a small error},
  journal = {To appear in J. Eur. Math. Soc. (JEMS). ArXiv e-prints},
archivePrefix = {arXiv},
   eprint = {1404.3652},
 primaryClass = {math.AP},
 keywords = {Mathematics - Analysis of PDEs},
     year = {2014},
    month = {apr},
   adsurl = {http://adsabs.harvard.edu/abs/2014arXiv1404.3652D},
  adsnote = {Provided by the SAO/NASA Astrophysics Data System}
}

\bib{2016arXiv160904438D}{article}{
   author = {{Dipierro}, Serena},
author = {{Savin}, Ovidiu},
author = {{Valdinoci}, Enrico},
    title = {Local approximation of arbitrary functions by solutions of nonlocal equations},
  journal = {ArXiv e-prints},
archivePrefix = {arXiv},
   eprint = {1609.04438},
 primaryClass = {math.AP},
 keywords = {Mathematics - Analysis of PDEs},
     year = {2016},
    month = {sep},
   adsurl = {http://adsabs.harvard.edu/abs/2016arXiv160904438D},
  adsnote = {Provided by the SAO/NASA Astrophysics Data System}
}

\bib{2015arXiv150405569D}{article}{
   author = {{Dipierro}, Serena},
   author = {{Valdinoci}, Enrico},
    title = {Continuity and density results for a one-phase nonlocal free boundary problem},
  journal = {ArXiv e-prints},
archivePrefix = {arXiv},
   eprint = {1504.05569},
 primaryClass = {math.AP},
 keywords = {Mathematics - Analysis of PDEs},
     year = {2015},
    month = {apr},
   adsurl = {http://adsabs.harvard.edu/abs/2015arXiv150405569D},
  adsnote = {Provided by the SAO/NASA Astrophysics Data System}
}

\bib{MR3477075}{article}{
   author={Fall, Mouhamed Moustapha},
   title={Entire $s$-harmonic functions are affine},
   journal={Proc. Amer. Math. Soc.},
   volume={144},
   date={2016},
   number={6},
   pages={2587--2592},
   issn={0002-9939},
   review={\MR{3477075}},
   doi={10.1090/proc/13021},
}

\bib{MR3169789}{article}{
   author={Fall, Mouhamed Moustapha},
   author={Felli, Veronica},
   title={Unique continuation property and local asymptotics of solutions to
   fractional elliptic equations},
   journal={Comm. Partial Differential Equations},
   volume={39},
   date={2014},
   number={2},
   pages={354--397},
   issn={0360-5302},
   review={\MR{3169789}},
   doi={10.1080/03605302.2013.825918},
}

\bib{MR3511811}{article}{
   author={Fall, Mouhamed Moustapha},
   author={Weth, Tobias},
   title={Liouville theorems for a general class of nonlocal operators},
   journal={Potential Anal.},
   volume={45},
   date={2016},
   number={1},
   pages={187--200},
   issn={0926-2601},
   review={\MR{3511811}},
   doi={10.1007/s11118-016-9546-1},
}

\bib{2015arXiv151206509F}{article}{
author={Farina, Alberto},
   author = {Valdinoci, Enrico},
title={Regularity and rigidity theorems for a class of anisotropic nonlocal operators}, 
JOURNAL = {Manuscripta Math.},
FJOURNAL = {Manuscripta Mathematica},
year={2016},
pages={1–18},
issn={1432-1785},
doi={10.1007/s00229-016-0875-6},
url={http://dx.doi.org/10.1007/s00229-016-0875-6}
}

\bib{MR0214795}{book}{
   author={Landkof, N. S.},
   title={Osnovy sovremennoi teorii potentsiala},
   language={Russian},
   publisher={Izdat. ``Nauka'', Moscow},
   date={1966},
   pages={515},
   review={\MR{0214795}},
}

\bib{MR0109940}{article}{
   author={Nirenberg, Louis},
   title={On elliptic partial differential equations},
   journal={Ann. Scuola Norm. Sup. Pisa (3)},
   volume={13},
   date={1959},
   pages={115--162},
   review={\MR{0109940}},
}

\bib{MR3168912}{article}{
   author={Ros-Oton, Xavier},
   author={Serra, Joaquim},
   title={The Dirichlet problem for the fractional Laplacian: regularity up
   to the boundary},
   language={English, with English and French summaries},
   journal={J. Math. Pures Appl. (9)},
   volume={101},
   date={2014},
   number={3},
   pages={275--302},
   issn={0021-7824},
   review={\MR{3168912}},
   doi={10.1016/j.matpur.2013.06.003},
}

\bib{MR3482695}{article}{
   author={Ros-Oton, Xavier},
   author={Serra, Joaquim},
   title={Regularity theory for general stable operators},
   journal={J. Differential Equations},
   volume={260},
   date={2016},
   number={12},
   pages={8675--8715},
   issn={0022-0396},
   review={\MR{3482695}},
   doi={10.1016/j.jde.2016.02.033},
}

\bib{MR3161511}{article}{
   author={Servadei, Raffaella},
   author={Valdinoci, Enrico},
   title={Weak and viscosity solutions of the fractional Laplace equation},
   journal={Publ. Mat.},
   volume={58},
   date={2014},
   number={1},
   pages={133--154},
   issn={0214-1493},
   review={\MR{3161511}},
}

\bib{MR2707618}{book}{
   author={Silvestre, Luis Enrique},
   title={Regularity of the obstacle problem for a fractional power of the
   Laplace operator},
   note={Thesis (Ph.D.)--The University of Texas at Austin},
   publisher={ProQuest LLC, Ann Arbor, MI},
   date={2005},
   pages={95},
   isbn={978-0542-25310-2},
   review={\MR{2707618}},
}

\end{biblist}

\end{document}